\def\cal{\mathcal}
\def\Re{\text{\rm Re\,}}
\def\cS{\mathcal{S}}
\def\cG{\mathcal{G}}
\def\cP{\mathcal{P}}
\def\bpm{\begin{pmatrix}}
\def\epm{\end{pmatrix}}
\newcommand{\rf}[1]{{\eqref{#1}}}
\newcommand{\fiproof}{{\hspace*{\fill} $\square$ \vspace{2pt}}}
\newcommand{\D}{{\mathbb D}}  
\newcommand{\wt}[1]{{\widetilde{#1}}}
\newcommand{\pv}{{\rm p.v.}}
\newtheorem{thm}{Theorem}[section]
\newtheorem{proposition}[thm]{Proposition}
\newtheorem{lemma}[thm]{Lemma}
\newtheorem{theorem}[thm]{Theorem}
\newtheorem{theo}[thm]{Theorem}
\newtheorem{lem}[thm]{Lemma}
\newtheorem{coro}[thm]{Corollary}
\theoremstyle{remark}
\newtheorem{remark}[thm]{\bf Remark}
\numberwithin{equation}{section}
\def\R{\mathbb R}
\def\C{\mathbb C}
\def\D{\mathbb D}
\def\H{\mathcal H}
\def\cB{\mathcal B}
\def\cG{\mathcal G}
\def\M{\mathcal M}
\def\S{\mathcal S}
\def\Z{\mathbb Z}
\def\diam{\text{diam}}
\def\supp{\operatorname{supp}}
\def\diam{\operatorname{diam}}
\newcommand{\ve}{{\varepsilon}}
\title[Quasiconformal distortion of capacities and Hausdorff measures]{Quasiconformal distortion of Riesz capacities and Hausdorff measures in the plane}
\author{K. Astala}
\address{Department of Mathematics and Statistics, University of Helsinki, FI-00014 Helsinki, Finland}
\email{\tt astala@mappi.helsinki.fi}
\author{A. Clop}
\address{Departament de Matem\`{a}tiques,  Universitat Aut\`{o}noma de Barcelona, 08193 Bellaterra (Barcelona), Catalunya}
\email{{\tt albertcp@mat.uab.cat}}
\urladdr{http://mat.uab.cat/~albertcp}
\author{X. Tolsa}
\address{Instituci\'{o} Catalana de Recerca i Estudis Avan\c{c}ats (ICREA) and Departament de Matem\`{a}tiques,  Universitat Aut\`{o}noma de Barcelona, 08193 Bellaterra (Barcelona), Catalunya}
\email{{\tt xtolsa@mat.uab.cat}}
\urladdr{http://mat.uab.cat/~xtolsa}
\author{I. Uriarte-Tuero}
\address{Department of Mathematics, Michigan State University, East Lansing, MI 48824, USA}
\email{{\tt ignacio@math.msu.edu}}
\author{J. Verdera}
\address{Departament de Matem\`{a}tiques,  Universitat Aut\`{o}noma de Barcelona, 08193 Bellaterra (Barcelona), Catalunya}
\email{{\tt jvm@mat.uab.cat}}
\thanks{2000 {\em Mathematical Subject Classification.30C62, 35J15, 35J70}
.}
\thanks{{\em Key words and phrases.}
quasiconformal mappings, Riesz capacities}
\begin{document}

\begin{abstract}
In this paper we prove the sharp distortion estimates for the quasiconformal
mappings in the plane, both in terms of the Riesz capacities from non linear potential theory and in terms of the Hausdorff measures.
\end{abstract}

\maketitle

\thispagestyle{empty}



\section{Introduction}
\noindent
A $K$-quasiconformal mapping is an orientation preserving homeomorphism $f:\Omega\to\Omega'$ between domains $\Omega,\Omega'\subset\R^n$ that belongs to the Sobolev space $W^{1,n}_{loc}(\Omega;\Omega')$ and satisfies the distortion inequality
\begin{equation}
\max_{|\xi|=1}|\partial_\xi f(x)|\leq K\,\min_{|\xi|=1}|\partial_\xi f(x)|
\end{equation}
at almost every point $x\in\Omega$. If $K=1$, then $f$ is indeed a conformal mapping. If one does not require $f$ to be a homeomorphism, then we simply say that $f$ is $K$-quasiregular. For more background on these mappings, see the monograph \cite{astalaiwaniecmartin}.\\
\\
In the planar setting ($n=2$), Astala's Theorem \cite{astalaareadistortion} solved the long standing Gehring-Reich conjecture on the area distortion of quasiconformal mappings, namely
\begin{equation}\label{area}
|\phi(E)|\leq C_K\,|E|^{1/K},
\end{equation}
where $\phi:\Omega\to\Omega'$ is a conveniently normalized $K$-quasiconformal mapping between planar domains, and $E$ is a measurable subset of $\Omega$. Other related questions, like the optimal integrability and the sharp Hausdorff dimension distortion, were solved as a consequence of \eqref{area}. In particular, one has
\begin{equation}\label{dimension}
\frac{1}{\dim\phi(E)}-\frac12\leq\frac{1}{K}\left(\frac{1}{\dim E}-\frac12\right).
\end{equation}
Furthermore, in \cite{astalaareadistortion} it was shown that equality can be attained for some sets $E$ and mappings $\phi$.\\
\\
In the last years renewed interest has arisen in connection with these questions, and deep advances have been made, improving the above inequality in several directions. The sharp
 quasiconformal distortion of Hausdorff contents $\M^ t$, $0<t<2$, was obtained by Lacey, Sawyer and Uriarte-Tuero \cite{Lacey-Sawyer-Uriarte}. They showed that if $E$ is contained in some ball $B$, $0<t<2$, and $t'=\frac{2Kt}{2+(K-1)t}$, then
\begin{equation}\label{contents}
\frac{\M^{t'}(\phi(E))}{\diam(\phi(B))^{t'}} \leq C(K) \,\left(\frac{\M^t(E)}{\diam(B)^t}\right)^{\frac{t'}{Kt}},
\end{equation}
which in particular proves the following implication about the corresponding Hausdorff measures
$\H^t$:
\begin{equation}\label{zerotozero}
 \H^t(E)=0\hspace{1cm}\Rightarrow\hspace{1cm}\H^{t'}(\phi(E))=0.
\end{equation}
This extends \eqref{dimension}, and answers in the affirmative a conjecture by Astala \cite{astalaareadistortion}. Previously, in \cite{ACMOU}, the particular case $t'=1$ had been solved. In any case, notice that from \rf{contents} it is not clear if $\H^{t'}(\phi(E))<\infty$ whenever 
$\H^t(E)<\infty$.\\
\\
The optimal quasiconformal distortion of analytic capacity has also been a topic of deep research (see for instance \cite{astalaareadistortion} or \cite{ACMOU}). In a recent joint work of Tolsa and Uriarte-Tuero \cite{Tolsa-Uriarte}, it is shown that, for $K>1$,
\begin{equation}\label{gammavsriesz}
\gamma(\phi(E))\leq C_K\,\,\left(\dot {\cal C}_{\frac{2K}{2K+1},\frac{2K+1}{K+1}}(E)\right)^\frac{K+1}{2K},
\end{equation}
where $\dot{\cal C}_{\alpha,p}$ is the classical Riesz capacity of nonlinear potential theory (see \rf{defcap}), and $\gamma$ denotes the analytic capacity. This estimate has remarkable consequences in the determination of removable sets for bounded $K$-quasiregular mappings. For the holomorphic case, see \cite{tolsasemiadditivityanalyticcapacity} and \cite{davidunrectifiable1setszeroanalyticcapacity}. To get \eqref{gammavsriesz}, the authors first show a sharp bound for the distortion of a Hausdorff content $\M^h$ (see \cite[Lemma 2.11]{Tolsa-Uriarte}), where $h$ is a gauge function which
is not invariant under translations. As a matter of fact, it turns out that 
Riesz capacities can be recovered as a supremum of Hausdorff contents $\M^h$ with $h$ running within some precise class. This allowed the authors to prove a second estimate, now concerning quasiconformal distortion of Riesz capacities. More precisely, for each $q>1$, they showed that
\begin{equation}\label{1dimensriesz}
\dot {\cal C}_{\frac1q,q}(\phi(E))\leq C_{K,q}\,\,\dot {\cal C}_{\alpha,p}(E)^\frac{K+1}{2K}
\end{equation}
where $p=1+\frac{2K}{K+1}(q-1)$ and $2-\alpha p=\frac{2}{K+1}$. Note that the left hand side is a $1$-dimensional quantity for every $q>1$. 
\\
\\
In the present paper, we extend \eqref{1dimensriesz} to  all other indices $\alpha,p$ and obtain a general version for the quasiconformal distortion of all Riesz capacities.

\begin{theorem} \label{teocap2}
Let $1<q<\infty$ and $0<\beta q<2$. Let $t'=2-\beta q$, and $t$ be such that
$$
\frac1t-\frac12 = K\left(\frac{1}{t'}-\frac12\right).
$$
Let $E\subset\C$ be compact, and let $\phi:\C\to\C$ be a $K$-quasiconformal map. If $E$ is contained in a ball $B$, then
\begin{equation}\label{eq11}
\frac{\dot{{\cal C}}_{\beta,q}(\phi(E))}{\diam(\phi(B))^{t'}}\leq C(\beta,q,K)\,\left(\frac{\dot{{\cal C}}_{\alpha,p}(E)}{\diam(B)^t}\right)^\frac{t'}{Kt}
\end{equation}
where
$$
p=1+\frac{Kt}{t'}\,(q-1)\hspace{1cm}\text{and}\hspace{1cm} 2-\alpha p=t.
$$
The constant in \rf{eq11} depends only on $\beta$, $q$, $K$.
\end{theorem}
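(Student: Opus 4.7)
The strategy I would follow extends directly the approach of Tolsa and Uriarte-Tuero in \cite{Tolsa-Uriarte}, where \eqref{1dimensriesz} was proved in the special case $t'=2-\beta q=1$. Notice that the relation $\frac{1}{t}-\frac{1}{2} = K\bigl(\frac{1}{t'}-\frac{1}{2}\bigr)$ is exactly the sharp dimension distortion governing the Hausdorff content estimate \eqref{contents}, so the Riesz capacities on the two sides of \rf{eq11} live in the sharp dimensional regime of \cite{Lacey-Sawyer-Uriarte}. This suggests combining two ingredients: (i) a two-sided characterization of $\dot{\cal C}_{\beta,q}$ and $\dot{\cal C}_{\alpha,p}$ as (essentially) a supremum of Hausdorff contents $\M^h$ over a class of gauge functions $h$ of the correct dimensional growth, and (ii) the sharp non-translation-invariant Hausdorff content distortion theorem, whose $t'=1$ version is \cite[Lemma 2.11]{Tolsa-Uriarte}.

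The proof would then run in three steps. \emph{First}, I would generalize the capacity characterization used in \cite{Tolsa-Uriarte} from $1$-dimensional gauges to arbitrary $t'=2-\beta q\in(0,2)$: show that $\dot{\cal C}_{\beta,q}(F)$ is comparable to a supremum over admissible nonnegative measures $\mu$ supported in $F$ of the Hausdorff content $\M^{h_\mu}(F)$, where $h_\mu$ is a gauge function of $t'$-dimensional growth built from the Wolff-potential of $\mu$. \emph{Second}, for each such $h_\mu$ I would apply the sharp quasiconformal distortion of $\M^{h_\mu}$, suitably extended from $t'=1$ to all $t'\in(0,2)$ along the lines of \cite{Lacey-Sawyer-Uriarte} and \cite[Lemma 2.11]{Tolsa-Uriarte}. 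This passes from $\M^{h_\mu}(\phi(E))/\diam(\phi(B))^{t'}$ to a power $t'/(Kt)$ of $\M^{\wt h}(E)/\diam(B)^{t}$, where the transported gauge $\wt h$ is obtained by composing $h_\mu$ with $\phi^{-1}$ and rescaling the dimensional factor from $t'$ to $t$ according to the sharp relation. \emph{Third}, I would invert the characterization of Step 1 on the source side, checking that the transported gauges $\wt h$ belong to the class that characterizes $\dot{\cal C}_{\alpha,p}(E)$, and take suprema to conclude \rf{eq11}. The prescribed relation $p-1=\frac{Kt}{t'}(q-1)$ is dictated precisely by how the exponent $q-1$ appearing in the Wolff-potential characterization on the $\phi(E)$ side transforms to $p-1$ on the $E$ side under the dimension rescaling $t'\leftrightarrow t$.

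The main obstacle, as already in \cite{Tolsa-Uriarte}, is Step 1: proving the two-sided equivalence between $\dot{\cal C}_{\beta,q}$ and a supremum of Hausdorff contents $\M^{h_\mu}$ for the full range $0<\beta q<2$, together with the corresponding statement for $\dot{\cal C}_{\alpha,p}$. The easy direction uses the standard duality between Riesz capacity, Wolff potentials, and measures of finite nonlinear energy; the matching lower bound requires constructing a near-extremal pair $(\mu,h_\mu)$ via a nonlinear Frostman-type argument, and verifying that $h_\mu$ has the $t'$-dimensional growth needed to feed into the content distortion of Step 2. Once these equivalences are in place for general $t'$, Steps 2 and 3 reduce to a careful bookkeeping of the parameters and of the normalizing diameters $\diam(B)$ and $\diam(\phi(B))$.
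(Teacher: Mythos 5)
Your proposal follows essentially the same route as the paper: your Step 1 is Lemma \ref{capMh} (refined in \eqref{caphmu}), your Step 2 is the distortion of $h$-contents for general $t'$ in Lemma \ref{lemkgran}, and your Step 3 is carried out via Lemma \ref{lemcg} together with the relation $p-1=\frac{Kt}{t'}(q-1)$ to check admissibility of the transported gauge. The only misjudgment is where the difficulty lies: in the paper the capacity--content equivalence follows quickly from Wolff's theorem and a Frostman lemma for $\cG_1$ gauges, whereas the bulk of the work is your Step 2, namely the weighted Beurling transform estimates and the conformal-inside/conformal-outside decomposition needed to prove Lemma \ref{lemkgran}.
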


\noindent
This result follows by combining some of the ideas from \cite{Tolsa-Uriarte} with others from
 \cite{Lacey-Sawyer-Uriarte}. Following this approach, in Lemma \ref{lemkgran} below one obtains distortion estimates in terms of the $h$-contents $\M^h$, with $h$ of the form $h(B(x,r))=r^t\,
 \ve(B(x,r)),$ with $0<t<2$ and $\ve(\cdot)$ satisfying some appropriate conditions. 
This result extends \cite[Lemma 2.11]{Tolsa-Uriarte} (which only dealt with the case $t'=1$). 
Theorem \ref{teocap2} is a direct consequence of the distortion estimates in terms of $h$-contents.
\\
\\
A second main result that we establish, using the $h$-contents $\M^h$, is the following distortion theorem involving Hausdorff measures.


\begin{theorem} \label{teopri}
Let $0<t<2$ and denote $t' =\frac{2Kt}{2+(K-1)t}$. Let $\phi:\C\to\C$ be $K$-quasiconformal. For any ball $B$ and any compact set $E\subset B$, we have
 \begin{equation}\label{eqhaus}
\frac{\H^{t'}(\phi(E))}{\diam(\phi(B))^{t'}} \leq C(K) \,\left(\frac{\H^t(E)}{\diam(B)^t}\right)^{\frac{t'}{Kt}}.
\end{equation}
In particular, if $\H^t(E)$ is finite, then also $\H^{t'}(\phi(E))$ is finite.
\end{theorem}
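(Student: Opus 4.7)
My plan is to derive the Hausdorff measure estimate \eqref{eqhaus} from the Hausdorff content estimate \eqref{contents} (more precisely, from its gauge-function refinement Lemma \ref{lemkgran}) by an approximation at small scales. If $\H^t(E)=\infty$ the statement is trivial, so I assume $\H^t(E)<\infty$. Since $\H^{t'}(\phi(E))=\lim_{\eta\downarrow 0}\H^{t'}_\eta(\phi(E))$, it suffices to prove
\[
\H^{t'}_\eta(\phi(E))\leq C(K)\,\diam(\phi(B))^{t'}\left(\H^t(E)/\diam(B)^t\right)^{t'/(Kt)}
\]
for every $\eta>0$, with a constant independent of $\eta$; the theorem then follows on letting $\eta\to 0$.

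Fix $\eta>0$. By uniform continuity of $\phi$ on a compact neighborhood of $\overline B$, choose $\delta=\delta(\eta)>0$ small enough that $\diam\phi(A)<\eta$ whenever $A\subset 2B$ satisfies $\diam A\leq\delta$. I then want to apply Lemma \ref{lemkgran} with a gauge of the form $h_\delta(B(x,r))=r^t\varepsilon_\delta(B(x,r))$, where $\varepsilon_\delta$ is chosen within the admissible class of the lemma so that it is comparable to $1$ for $r\leq\delta$ and very large (or effectively infinite) for $r>\delta$. With such a choice, $\M^{h_\delta}(E)$ is comparable to $\H^t_\delta(E)$, which is in turn dominated by $\H^t(E)$. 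On the image side, the cover of $\phi(E)$ produced by the lemma consists of balls whose radii, by transport through $\phi$ and the choice of $\delta$, are of size $\lesssim\eta$; consequently the conclusion of the lemma translates into a bound on $\H^{t'}_{C\eta}(\phi(E))$ of the desired form, and letting first $\delta$ and then $\eta$ tend to $0$ yields Theorem \ref{teopri}.

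The principal obstacle is the calibration of $\varepsilon_\delta$ so that simultaneously (i) it lies in the admissible class for Lemma \ref{lemkgran}, (ii) $\M^{h_\delta}(E)$ is comparable to $\H^t_\delta(E)$, and (iii) the resulting image-side content dominates a constant multiple of $\H^{t'}_{C\eta}(\phi(E))$. If such a calibration is blocked by the admissibility hypotheses on $\varepsilon$, the alternative is to revisit the proof of Lemma \ref{lemkgran} itself and track diameters explicitly: since that proof constructs a cover of $\phi(E)$ by transporting a cover of $E$ through $\phi$ (using area distortion and a packing/stopping-time scheme), starting from an $E$-side cover of scale $\leq\delta$ produces, via the uniform continuity of $\phi$, a $\phi(E)$-side cover of scale $\lesssim\eta$. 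Either route converts the content estimate into the required $\H^{t'}_\eta$ estimate, and completes the proof.
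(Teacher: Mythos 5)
Your overall strategy --- upgrading the content estimate to a measure estimate by forcing all coverings to live at scales below $\delta$ and then pushing the small cover through $\phi$ --- is precisely the route that the authors warn against in the introduction, and neither of your two proposed implementations closes the gap. First, the calibration of $\ve_\delta$ is genuinely blocked by the admissibility hypotheses: membership in $\cG_2(t)$ forces $\sum_{k\geq0}2^{-k(2-t)}\ve(x,2^kr)\lesssim \ve(x,r)$, so $h(x,r)=r^t\ve(x,r)$ can grow at most like $r^{2-a}$ and a gauge that is $\simeq r^t$ for $r\le\delta$ cannot be made ``effectively infinite'' above scale $\delta$; moreover, for the image-side content $\M^{h'}(\phi(E))$ to dominate $\H^{t'}_{C\eta}(\phi(E))$ one would have to penalize a single large covering ball of radius $R$ by a factor of order $(R/\eta)^{2-t'}$, which is again incompatible with $\cG_1\cap\cG_2$ uniformly in $\eta$. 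Second, and more fundamentally, your fallback of ``tracking diameters'' through the proof of Lemma \ref{lemkgran} fails because that proof does not accept an arbitrary $\delta$-fine cover of $E$: it requires a family $\cP$ of dyadic squares with disjoint triples satisfying the $h$-packing condition \eqref{packingconstant}, and the standard way to enforce packing is to replace offending clusters of cubes by a common dyadic ancestor --- an operation that is harmless for contents but destroys any upper bound on the scale of the cover. This is exactly the obstruction the paper points to when it says that the arguments of \cite{Lacey-Sawyer-Uriarte} ``involve some packing conditions which hold for Hausdorff contents but not for Hausdorff measures.''

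The paper's actual proof runs in the opposite direction and never constructs a fine cover of $\phi(E)$. By the upper-density theorem for Hausdorff measures one extracts a compact $F\subset E$ with $\H^{t'}(\phi(F))\ge \H^{t'}(\phi(E))/2$ and $\H^{t'}(\phi(E)\cap B(x,r))\le 5\,r^{t'}$ for $x\in\phi(F)$ and $r\le\delta$; one then takes $\mu=\H^{t'}_{|\phi(F)}$ on the \emph{image} side, smooths it into the admissible gauge $h_{\mu,a,t'}$, applies Lemma \ref{lemkgran} to get $\M^{h_{\mu,a,t'}}(\phi(F))\le C\,\M^{\wt h}(F)^{t'/(Kt)}$, and then uses Frostman's Lemma on the source side to produce $\nu$ on $F$ with $\nu(F)\gtrsim \M^{\wt h}(F)$ and $\nu(B(x,r))\le r^t\,\ve_{\mu,a,t'}(\phi(B(x,r)))^{Kt/t'}\le C\,r^t$ for all sufficiently small $r$ (this last bound coming from the density estimate via Lemma \ref{lemdens} and the H\"older continuity of $\phi$). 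A Frostman growth condition valid only at small scales is enough to bound the Hausdorff \emph{measure} $\H^t(F)$ from below, and chaining the inequalities gives $\H^t(E)\gtrsim \H^{t'}(\phi(E))^{Kt/t'}$. In short: rather than covering $\phi(E)$ finely, the proof dualizes and builds a measure on $E$; you would need to adopt this mechanism (or supply a genuinely new way to reconcile $\delta$-fine covers with the packing condition) for your argument to go through.
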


\noindent
Notice that \rf{eqhaus} is the estimate that one gets replacing Hausdorff contents by Hausdorff measures in \rf{contents}. 
This result may seem somewhat surprising, because in the arguments used in 
\cite{Lacey-Sawyer-Uriarte} (and in \cite{ACMOU} in the case $t'=1$) to prove \rf{contents} it 
is essential that one works with Hausdorff contents, and not with Hausdorff 
measures: for instance, many estimates
in \cite{Lacey-Sawyer-Uriarte} and \cite{ACMOU} involve some packing conditions which hold for Hausdorff
contents but not for Hausdorff measures.\\
\\
Let us remark that Theorem \ref{teopri} was proved in \cite{Tolsa-preprintqc}. However, since this result follows
using the distortion estimates for $h$-contents in Lemma \ref{lemkgran}, X. Tolsa has preferred to include this in the present paper (see Section \ref{tolsapreprint}).\\
\\
 An immediate corollary of Theorem \ref{teopri} is the following. 

\begin{coro}\label{teosigma}
Let $E\subset\C$ be compact and $\phi:\C\to\C$ a $K$-quasiconformal map. 
If $\H^t(E)$ is $\sigma$-finite for some $0<t<2$, then $\H^{t'}(\phi(E))$ is $\sigma$-finite for $t'=\frac{2Kt}{2+Kt-t}$.
\end{coro}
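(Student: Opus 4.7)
The plan is to deduce the corollary from Theorem \ref{teopri} combined with the null-set implication \eqref{zerotozero}, after decomposing $E$ into countably many compact pieces of finite $\H^t$-measure plus an $\H^t$-null remainder. Since $E$ is compact, it lies in some ball $B$, so every compact subset of $E$ lies in the same $B$ and Theorem \ref{teopri} will be applicable to each piece (with the common normalizing ball $B$).

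The first step is the measure-theoretic decomposition. The $\sigma$-finiteness hypothesis yields $E = \bigcup_i A_i$ with $\H^t(A_i)<\infty$. By Borel regularity of the Hausdorff outer measure $\H^t$, I can replace each $A_i$ by a Borel envelope $\wt A_i\supset A_i$ with $\H^t(\wt A_i)=\H^t(A_i)<\infty$. The restriction $\H^t\lfloor \wt A_i$ is then a finite Borel-regular measure on $\C$, hence inner regular by compact sets, so I can write $\wt A_i = N_i \cup \bigcup_m C_{i,m}$ with $\H^t(N_i)=0$ and the $C_{i,m}$ compact of finite $\H^t$-measure. Intersecting with the compact set $E$ produces
$$
E = N \cup \bigcup_{i,m} (C_{i,m}\cap E),
$$
where $N\subset\bigcup_i N_i$ is $\H^t$-null and each $C_{i,m}\cap E$ is compact with $\H^t(C_{i,m}\cap E)<\infty$.

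Next I will apply Theorem \ref{teopri} to each $C_{i,m}\cap E\subset B$, obtaining $\H^{t'}(\phi(C_{i,m}\cap E))<\infty$. For the null part, the already established implication \eqref{zerotozero} (itself a consequence of \eqref{contents}) gives $\H^{t'}(\phi(N))=0$. Therefore
$$
\phi(E) = \phi(N) \cup \bigcup_{i,m}\phi(C_{i,m}\cap E)
$$
is a countable union of sets of finite $\H^{t'}$-measure, and in particular $\sigma$-finite for $\H^{t'}$.

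I do not anticipate any genuine obstacle. The only subtlety is the measure-theoretic bookkeeping that upgrades the abstract $\sigma$-finiteness of $\H^t$ on $E$ into a decomposition by compact subsets modulo a null set, but this is a standard consequence of Borel regularity together with inner regularity of finite Borel measures on $\C$. Once this decomposition is in hand, Theorem \ref{teopri} and \eqref{zerotozero} do the rest.
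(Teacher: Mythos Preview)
Your proof is correct and follows exactly the route the paper intends when it declares the corollary ``immediate'' from Theorem \ref{teopri}: you simply supply the standard measure-theoretic details, decomposing the $\sigma$-finite set into countably many compact pieces of finite $\H^t$-measure plus an $\H^t$-null remainder, then applying Theorem \ref{teopri} to each compact piece and \eqref{zerotozero} to the null part. The paper also records a genuinely different alternative argument (Section~6) that proves the contrapositive via zero-density measures and Lemma \ref{lemkgran}, but your argument matches the paper's primary derivation.
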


As usual, the letters $c,C$ denote constants (often, absolute constants) that may change 
at different occurrences, while constants with a subscript, such as $C_1$, 
retain their values. The notation $A\lesssim B$ means that there is a positive constant $C$ such that $A\leq CB$, and $A\simeq B$ means that $A\lesssim B\lesssim A$.

\bigskip


\section{Measures, gauge functions and Hausdorff contents}  \label{sec1}

\subsection{Strategy for the proof of Theorem \ref{teocap2}}\label{strategy}
\noindent
To motivate the introduction of the $h$ contents below, we will describe 
the main ideas in the proof of Theorem \ref{teocap2}. Recall that the homogeneous Riesz capacity $\dot{\cal C}_{\alpha,p}$ is defined as
\begin{equation}\label{defcap}
\dot{\cal C}_{\alpha,p}(E)=\inf\{\|g\|_{L^p(\C)}; g\in L^p(\C),\,I_\alpha\ast g\geq\chi_E   \},
\end{equation}
where $I_\alpha(z)=\frac{1}{|z|^{2-\alpha}}$ is the usual planar Riesz kernel of order $\alpha$. Further, by Wolff's Theorem (see for instance \cite{adamshedberg}), it turns out that
$$
\dot{\cal C}_{\alpha,p}(E)\simeq\sup\{\mu(E); \supp(\mu)\subset E, \dot{W}^{\mu}_{\alpha,p}(z)\leq 1\,\,\forall z\in\C\}
$$ 
where 
$$\dot{W}^\mu_{\alpha,p}(z)=\int_0^\infty\left(\frac{\mu(B(z,r))}{r^{2-\alpha p}}\right)^{p'-1}\frac{dt}{t}$$
is the homogeneous Wolff potential of $\mu$. In this paper, we prove that $\dot{\cal C}_{\alpha,p}(E)$ coincides with the following supremum of generalized Hausdorff contents, modulo multiplicative constants:
$$
\dot{{\cal C}}_{\alpha,p}(E)
\simeq\sup\left\{\!\M^h(E): h(x,r)=r^{2-\alpha\,p}\,\ve(x,r),\;\ve\in{\cal G}_2,\int_0^\infty\!\left(\frac{h(x,r)}{r^{2-\alpha\,p}}\right)^{p'-1}\frac{dr}{r}\leq 1\!
\right\}
$$
(see Subsection \ref{Gp} for the definition of ${\cal G}_2$). Then the problem is reduced to see how quasiconformal mappings distort the generalized Hausdorff contents $\M^h$ whenever $h$ is an admissible gauge function for $\dot{{\cal C}}_{\alpha,p}$. 
In particular, to each finite Borel measure $\mu$ with bounded Wolff potential, we can associate a gauge function $h=h_\mu$ admissible for the Riesz capacity. Actually, in
 the supremum above one may restrict to such gauge functions $h_\mu$. This fact is very useful because the Hausdorff measures $\H^{h_\mu}$ and contents $\M^{h_\mu}$ can be seen as \emph{regularized} versions of $\mu$, whence easier to work with. Such gauge functions were already introduced in \cite{Tolsa-Uriarte}, but for the reader's convenience we remember their definition and main properties in the next subsections.

\noindent

\subsection{The gauge functions $h_{\mu,a,t}$}
Let $0<t<2$ and $a>0$ be fixed parameters. We consider the function 
\begin{equation}\label{eqpsia}
\psi_{a,t}(x) = \frac1{|x|^{t+a}+1},\qquad x\in\C.
\end{equation}
Given a compactly supported finite Borel measure $\mu$, let us define for every ball $B=B(x,r)$ the gauge function
\begin{equation}\label{defhx}
h(x,r)=h_{\mu,a,t}(x,r) = h_{\mu,a,t}(B) =\int \psi_{a,t}\left(\frac{|x-y|}{r}\right)\,d\mu(y).
\end{equation}
Now, for our fixed $a,t$, set
\begin{equation}\label{defvex}
\ve(x,r)=\ve_{\mu,a,t}(x,r)= \frac{h_{\mu,a,t}(x,r)}{r^ t}=\frac1{r^t} \int \psi_{a,t}\Bigl(\frac{y-x}r\Bigr)d\mu(y),
\end{equation}
so that $h_{\mu,a,t}(x,r)=r^t\,\ve_{\mu,a,t}(x,r)$. One should view $t$ as a dimensional parameter,
while the role of $a$ is to provide enough decay at $\infty$ of $\psi_{a,t}$.
Notice that, by construction, 
$\mu(B)\leq 2\,h_{\mu,a,t}(B)$, and that $h_{\mu,a,t}(B)$ can be seen as a 
smooth version of $\mu(B)$. Similarly, $\ve_{\mu,a,t}(B)$ is a kind of smooth substitute of the
 $t$-dimensional density $\theta_\mu^t(B)=\mu(B)/r(B)^t$. One of the advantages of 
$\ve_{\mu,a,t}(x,r)$ over $\theta_\mu^t(B(x,r))$ is that, for $C=2^a$, 
$$\ve_{\mu,a,t}(x,2r)\leq C\ve_{\mu,a,t}(x,r)$$ 
for any $x$ and $r>0$, which fails in general for $\theta_\mu^t(x,r)$. Analogously, we have 
$$h_{\mu,a,t}(x,2r)\leq C\,h_{\mu,a,t}(x,r),$$ 
for $C=2^{at}$,
while  $\mu(B(x,r))$ and $\mu(B(x,2r))$ may be very different.

\bigskip


\subsection{The measures $\H^h$ and the contents $\M^h$}\label{Hh} Let $\cB$ denote the family of all closed balls contained in $\C$, and let $\ve:\cB\to[0,\infty)$ be any function defined on $\cB$. We set $\ve(x,r)=\ve(B(x,r))$, and we define $h(x,r)=\ve(x,r)\,r^t$.  We assume that 
$$\lim_{r\to 0}h(x,r)=0\hspace{1cm}\text{ for all }x\in\C.$$
We introduce the measure $\H^h$ following Carath\'eodory's construction (see \cite{mattila}, p.54): given $0<\delta\leq\infty$ and a set $F\subset\C$,
$$ \H^h_\delta(F) = \inf\sum_i h(B_i),$$
where the infimum is taken over all coverings $F\subset \bigcup_i B_i$ with balls $B_i$ with radii smaller that $\delta$. Finally, we
define 
$$ \H^h(F) = \lim_{\delta\to0}  \H^h_\delta(F).$$
The above limit exists, because $\H^h_\delta(F)$ is a non-increasing function of $\delta$. For $\delta=\infty$, we obtain the $h$-content, and we simply write $\M^h(E)=\H_\infty^h(E)$. Recall also that $ \H^h$ is a Borel regular measure (see \cite{mattila}), although it is not a ``true'' Hausdorff measure. It is clear that $\M^h(F)\leq\H^h(F)$. On the other hand, the implication
$$\M^h(F)=0\hspace{1cm}\Rightarrow\hspace{1cm}\H^h(F)=0$$
also holds if the function $r\mapsto h(x,r)$ is non-decreasing for all $x\in F$. This is the case, for instance, if $h=h_{\mu,a,t}$ for certain measure $\mu$. 

\begin{lemma}\label{Mmu}
For any Borel set $A$, we have
$$\mu(A)\leq 2\,\M^{h_{\mu,a,t}}(A).$$
\end{lemma}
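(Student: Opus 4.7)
The plan is to reduce the statement to a pointwise comparison between $\mu(B)$ and $h_{\mu,a,t}(B)$ for a single ball, and then use countable subadditivity of $\mu$ together with the definition of $\mathcal{M}^{h}$ as an infimum over ball coverings. There is no genuine obstacle here; the lemma is a direct consequence of the normalization chosen in the definition of $\psi_{a,t}$.

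\textbf{Step 1: Ball-by-ball comparison.} First I would observe that for the fixed gauge $\psi_{a,t}(x) = 1/(|x|^{t+a}+1)$ one has $\psi_{a,t}(1)=1/2$, so by monotonicity of $r \mapsto 1/(r^{t+a}+1)$, $\psi_{a,t}(s) \geq 1/2$ for every $s \in [0,1]$. Applied to a ball $B=B(x,r)$, this gives, for every $y \in B$,
\begin{equation*}
\psi_{a,t}\!\left(\frac{|x-y|}{r}\right) \geq \frac12.
\end{equation*}
Integrating against $\mu$ restricted to $B$ yields
\begin{equation*}
h_{\mu,a,t}(B) \;=\; \int \psi_{a,t}\!\left(\frac{|x-y|}{r}\right) d\mu(y) \;\geq\; \int_{B} \psi_{a,t}\!\left(\frac{|x-y|}{r}\right) d\mu(y) \;\geq\; \frac12\,\mu(B).
\end{equation*}
Hence $\mu(B)\leq 2\,h_{\mu,a,t}(B)$ for every closed ball $B$.

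\textbf{Step 2: Passage to coverings.} Let $\{B_i\}_i$ be any countable covering of $A$ by closed balls in $\mathcal{B}$. Since $\mu$ is a Borel measure and $A$ is Borel, countable subadditivity gives
\begin{equation*}
\mu(A) \;\leq\; \sum_i \mu(B_i) \;\leq\; 2\sum_i h_{\mu,a,t}(B_i),
\end{equation*}
using the bound from Step~1 termwise.

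\textbf{Step 3: Taking the infimum.} Recalling that $\mathcal{M}^{h_{\mu,a,t}}(A)=\mathcal{H}^{h_{\mu,a,t}}_\infty(A)$ is defined as the infimum of $\sum_i h_{\mu,a,t}(B_i)$ over all such coverings (with no upper bound on the radii), I take the infimum on the right-hand side to conclude
\begin{equation*}
\mu(A) \;\leq\; 2\,\mathcal{M}^{h_{\mu,a,t}}(A),
\end{equation*}
which is the desired inequality.
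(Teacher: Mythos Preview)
Your proof is correct and follows essentially the same route as the paper: both rely on the ball-by-ball inequality $\mu(B)\le 2\,h_{\mu,a,t}(B)$ (which the paper simply cites as ``by construction'' while you justify it explicitly via $\psi_{a,t}\ge \tfrac12$ on $[0,1]$), then use countable subadditivity of $\mu$ on an arbitrary ball covering and pass to the infimum. The only cosmetic difference is that the paper fixes a covering within $\eta$ of the infimum and lets $\eta\to0$, whereas you take the infimum directly; these are equivalent.
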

\begin{proof}
Given any $\eta>0$, consider a covering $A\subset \bigcup_i B_i$ by balls so that
$$\sum_i h_{\mu,a,t}(B_i) \leq   \M^{h_{\mu,a,t}}(A) +\eta.$$ 
Since $\mu(B_i)\leq 2 h_{\mu,a,t}(B_i)$, we have
$$\mu(A)\leq \sum_i \mu(B_i)\leq 2\sum_i h_{\mu,a,t}(B_i) \leq 2 \M^{h_{\mu,a,t}}(A) + C\eta.$$ 
\end{proof}


\subsection{The families $\cG_1$ and $\cG_2$} 
\label{Gp}. We say that the function $\ve:\cB\to[0,\infty)$ belongs to $\cG_1$ if there exists a constant $C_2$ such that 
\begin{equation}\label{eqeq1}
C_2^{-1}\,\ve(x,r)\leq \ve(y,s)\leq C_2\,\ve(x,r)
\end{equation}
whenever $|x-y|\leq 2r$ and $r/2\leq s\leq 2r$. Note that \rf{eqeq1} also holds with a different constant $C_2$ if one assumes $|x-y|\leq Cr$ and $C^{-1}r\leq s\leq Cr$, by applying \rf{eqeq1} finitely many
times. It is easy to check that every $\ve_{\mu,a,t}$ belongs to $\cG_1$, due to the properties of the function $\psi_{a,t}$.\\
\\
\noindent
It was noticed in \cite{Tolsa-Uriarte} that if $\ve\in{\cal G}_1$ then Frostman's Lemma holds for $\M^h$, where $h(x,r)=r^t\,\ve(x,r)$:

\begin{lemma}\label{frostman}
If $\ve\in\cG_1$ and $h(x,r)=r^t\ve(x,r)$, then Frostman's Lemma holds for $ \M^h$. That is,
given a compact set $F\subset \C$, then $ \M^h(F)>0$ if and only if
there exists a Borel measure $\nu$ supported on $F$ such that $\nu(B)\leq h(B)$ for any ball $B$. Furthermore, we can find $\nu$ so that $\nu(F)\geq c^{-1}  \M^h(F)$.
\end{lemma}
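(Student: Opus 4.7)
The plan is to adapt the classical dyadic construction of a Frostman measure, with the $\cG_1$ condition substituting for the translation-invariant homogeneity of the gauge $r\mapsto r^t$. The necessity direction of the equivalence is routine: if $\nu$ is supported on $F$ with $\nu(B)\le h(B)$ for every ball $B$, then $\nu(F)\le\M^h(F)$ follows by testing against ball coverings of $F$. The real content is therefore to produce such a $\nu$ with $\nu(F)\gtrsim\M^h(F)$ whenever $\M^h(F)>0$.

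First I would fix a dyadic lattice $\cD$ in $\C$ and a top cube $Q_0\supset F$, and set $h(Q):=\ell(Q)^t\,\ve(x_Q,\ell(Q))$ for $Q\in\cD$, where $x_Q$ is the center of $Q$. The $\cG_1$ hypothesis yields a two-sided comparison $h(Q)\simeq h(B)$ between a dyadic cube $Q$ and any ball $B$ of comparable size with nearby center. In particular, the quantity $\M^h(F)$ computed with balls is comparable to the variant computed with dyadic cubes, and a ball-form Frostman condition follows from a dyadic one up to a multiplicative constant.

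Next, for a large integer $n$, I would run the classical upward normalization. Place on each generation-$n$ cube $Q$ meeting $F$ a uniform measure of total mass $h(Q)$, and for $k=n-1,n-2,\ldots,0$ rescale the mass on each generation-$k$ cube $P$ by the factor $\min\{1,\,h(P)/(\text{current mass of }P)\}$. A straightforward induction forces $\mu_n(Q)\le h(Q)$ for every $Q\in\cD$, hence $\mu_n(B)\le C_1 h(B)$ for every ball $B$ by the dyadic-to-ball transfer. For the lower bound, call a cube $Q$ \emph{saturated} when $\mu_n(Q)=h(Q)$ (either because a rescaling occurred exactly at level $Q$ or because $Q$ has generation $n$ and no ancestor was rescaled); the maximal saturated cubes are pairwise disjoint, cover $F$, and satisfy
\begin{equation*}
\mu_n(\C)\;\ge\;\sum_{Q\text{ max.\ sat.}}h(Q)\;\ge\;c\,\M^h(F),
\end{equation*}
using the dyadic-ball comparison and the definition of $\M^h$.

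Finally I would pass to a weak-$*$ limit $\nu$ of the uniformly bounded sequence $\{\mu_n/C_1\}$. Since $\supp(\mu_n)$ sits in the union of generation-$n$ cubes meeting $F$ (which shrinks to $F$), one obtains $\supp(\nu)\subset F$; the Frostman estimate $\nu(B)\le h(B)$ passes to the limit after absorbing a small enlargement of $B$ via $\cG_1$; and $\nu(F)\ge c^{-1}\M^h(F)$ from the mass bound above. The main technical obstacle throughout is the dyadic-to-ball transfer --- without $\cG_1$ the gauge $\ve$ could oscillate at the scale of a single ball and the Frostman bound would fail to move cleanly between cubes and balls. The $\cG_1$ hypothesis is exactly what ensures this transfer costs only a dimensional multiplicative constant, uniformly across scales and locations.
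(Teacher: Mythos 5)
Your proposal is correct and is essentially the proof the paper intends: the paper does not write out the argument but simply states that "the proof is almost the same as the one of the usual Frostman's Lemma (see \cite{mattila}, p.112), taking into account the regularity properties of the gauge functions $h\in\cG_1$," which is exactly the dyadic upward-normalization plus weak-$*$ limit scheme you describe, with $\cG_1$ supplying the dyadic-to-ball transfer.
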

\noindent
The proof is almost the same as the one of the usual Frostman's Lemma (for instance, see \cite{mattila}, p.112), taking into account the regularity properties of the gauge functions $h\in\cG_1$.\\
\\
Now we introduce the class $\cG_2$. For each fixed $0<t<2$, the class $\cG_2=\cG_2(t)$ consists on functions $\ve=\ve(x,r)$ such that
\begin{equation}\label{eqsum4}
\sum_{k\geq0} 2^{-k(2-t)}\,\ve(x,2^kr)\leq C_3 \,\ve(x,r), \text{ for all }x\in\C,
\end{equation}
for certain constant $C_3>0$. Notice that \eqref{eqsum4} is equivalent to saying that
$$\int_r^\infty\frac{\varepsilon(x,s)}{s^{2-t}}\,\frac{ds}{s}\leq C_3\,\frac{\varepsilon(x,r)}{r^{2-t}},$$
or in terms of $h(x,r)=r^t\,\ve(x,r)$,
$$\int_r^\infty\frac{h(x,s)}{s^{2}}\,\frac{ds}{s}\leq C_3\,\frac{h(x,r)}{r^{2}}.$$
Observe that this estimate does not hold for the area, that is, for $h(x,r)=r^2$, neither for gauges $h$ too close to $r^2$, like $h(r)=r^2\log\frac1r$. 

\begin{lemma}\label{emug2}
Let $a>0$, $\alpha,\beta>0$ and $m=\min(\alpha,\beta)$. If $\alpha\neq \beta$, then for all $x>0$ we have
\begin{equation}\label{lemtec1}
 \sum_{k\geq 0} \frac{2^{-\beta k}}{\bigl(2^{-k}x\bigr)^{\alpha} + 1}\leq \frac C{x^{m} + 1},
\end{equation}
with $C$ depending only on $\alpha, \beta$. As a consequence, if $\mu$ is a finite Borel measure, then $\ve_{\mu,a,t}\in \cG_2(t)$ whenever $0<a<2-t$.
\end{lemma}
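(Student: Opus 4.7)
The plan is to first establish the scalar inequality \eqref{lemtec1} by a straightforward split of the series, and then derive the consequence on $\ve_{\mu,a,t}$ via Fubini. The key heuristic is that the summand $\frac{2^{-\beta k}}{(2^{-k}x)^\alpha+1}$ behaves like $2^{-\beta k}$ when $2^{-k}x \leq 1$ (i.e.\ $k \geq \log_2 x$) and like $2^{-\beta k}(2^{-k}x)^{-\alpha} = x^{-\alpha}2^{k(\alpha-\beta)}$ when $2^{-k}x \geq 1$ (i.e.\ $k \leq \log_2 x$); the geometric behavior of both halves is governed by $\min(\alpha,\beta)$.

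More precisely, for $x \leq 1$ all summands are bounded by $2^{-\beta k}$, and the whole sum is $\lesssim 1 \simeq 1/(x^m+1)$. For $x > 1$, let $k_0$ be the integer part of $\log_2 x$. On the tail $k \geq k_0$ one uses $(2^{-k}x)^\alpha + 1 \geq 1$ to get $\sum_{k\geq k_0} 2^{-\beta k}\simeq 2^{-\beta k_0}\simeq x^{-\beta}$. On the head $k < k_0$ one bounds $(2^{-k}x)^\alpha + 1 \geq (2^{-k}x)^\alpha$ and gets the geometric sum $x^{-\alpha}\sum_{k<k_0}2^{k(\alpha-\beta)}$, which behaves like $x^{-\alpha}$ if $\alpha<\beta$ and like $x^{-\alpha}\cdot 2^{k_0(\alpha-\beta)}\simeq x^{-\beta}$ if $\alpha>\beta$. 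The hypothesis $\alpha\neq\beta$ is precisely what is needed for the geometric series to have a controlled (non-logarithmic) behavior. In both cases the resulting bound is $\lesssim x^{-m}\simeq 1/(x^m+1)$, proving \eqref{lemtec1}.

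For the consequence, the idea is to plug the definition \eqref{defvex} into the left hand side of \eqref{eqsum4} and swap the sum with the integral against $\mu$. After factoring out the common $r^{-t}$, the sum becomes
\[
\sum_{k\geq 0} 2^{-k(2-t)}\cdot 2^{-kt}\,\psi_{a,t}\!\left(\frac{|x-y|}{2^k r}\right)=\sum_{k\geq 0}\frac{2^{-2k}}{\bigl(2^{-k}|x-y|/r\bigr)^{t+a}+1}.
\]
This is exactly the series in \eqref{lemtec1} with $\alpha=t+a$, $\beta=2$, and $u=|x-y|/r$. The hypothesis $0<a<2-t$ ensures $\alpha=t+a<2=\beta$, in particular $\alpha\neq\beta$ and $m=\min(\alpha,\beta)=t+a$. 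Applying \eqref{lemtec1} pointwise in $y$ then bounds the inner sum by $C\,\psi_{a,t}(|x-y|/r)$, and integrating against $d\mu(y)$ and restoring the factor $r^{-t}$ recovers $C\,\ve_{\mu,a,t}(x,r)$, which is exactly \eqref{eqsum4}.

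The main (though modest) obstacle is the careful bookkeeping in the scalar estimate: one has to check that the logarithmic factor that would appear if $\alpha=\beta$ is indeed absent, which justifies the strict inequalities in the hypothesis. Once that is done, the second part is essentially a Fubini-plus-substitution, and no further difficulty arises.
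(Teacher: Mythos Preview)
Your proposal is correct and follows essentially the same approach as the paper: the paper also splits the sum according to whether $2^{-k}x\geq 1$ or $2^{-k}x<1$ and approximates the denominator accordingly (though it skips the details you spell out), and for the consequence it performs exactly the same Fubini computation, applying \eqref{lemtec1} with $\alpha=t+a$, $\beta=2$, $m=t+a$.
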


\begin{proof}
The estimate \eqref{lemtec1} is just a numerical inequality which can be proved by splitting the
sum according to whether $2^{-k}x\geq 1$ or $2^{-k}x< 1$, and then approximating the denominator
inside the sum by $2^{-k}x$ in the first case and by $1$ in the second, for instance. 
We skip the details. 
To deal with the last
statement, we just have to combine \eqref{lemtec1} with the definitions above to get
$$\aligned
\sum_{k=0}^\infty \frac{\ve_{\mu,a,t}(x,2^kr)}{2^{(2-t)k}}
&=r^{-t}\sum_{k=0}^\infty \frac{h_{\mu,a,t}(x,2^kr)}{2^{2k}}= r^{-t}\,\int \sum_{k=0}^\infty 2^{-2k}\,\psi_{a,t}\left(2^{-k}\,\frac{|z-x|}{r}\right)\,d\mu(z)\\
&\leq C\,r^{-t}\,\int \psi_{a,t}\left(\frac{|z-x|}{r}\right)d\mu(z)=C\,r^{-t}\,h_{\mu,a,t}(x,r)=C\,\ve_{\mu,a,t}(x,r),
\endaligned$$
where \eqref{lemtec1} was used with $\alpha=t+a$, $\beta=2$ (hence $m=t+a$). In particular, $\ve_{\mu,a,t}\in\cG_2(t)$. 
\end{proof}

\begin{remark}
If $a=2-t$, then we cannot ensure that $\ve_{\mu,a,t}\in \cG_2(t)$. Indeed, if
 we set $\alpha=\beta$ in the left hand side \eqref{lemtec1}, then one gets a worse estimate.
 One easily checks that in this case,
$$\sum_{k\geq 0} \frac{2^{-\alpha k}}{\bigl(2^{-k}x\bigr)^{\alpha} + 1}
\simeq\frac{1}{\alpha}\,\frac{\log(1+x^\alpha)}{x^\alpha+1},$$
with absolute constants. Hence a logarithmic term appears, which implies that 
$$
\aligned
\sum_{k=0}^\infty \frac{\ve_{\mu,2-t,t}(x,2^kr)}{2^{(2-t)k}}&= r^{-t}\,\int \sum_{k=0}^\infty \frac{2^{-2k}}{\left(2^{-k}\,\frac{|z-x|}{r}\right)^2+1}\,d\mu(z)\\
&\simeq r^{-t} \int\frac{1}{\left(\frac{|z-x|}{r}\right)^2+1}\,\log\left(1+\left(\frac{|z-x|}{r}\right)^2\right)\,d\mu(z)
\endaligned
$$
and so we cannot infer that $\ve_{\mu,2-t,t}\in\cG_2(t)$.
\end{remark}



\subsection{Behavior of $\cG_1$ and $\cG_2$ under quasiconformal mappings}


Need will arise of evaluating gauge functions $h=h(B(x,r))=r^t\,\ve(x,r)$ on sets that are not necessarily balls. To do this, given an arbitrary bounded set $A\subset\C$, let $B$ a ball with minimal diameter that contains $A$. Then we set 
$$\ve(A)=\ve(B).$$
If may happen that $B$ is not unique, but this does not cause any harm. In this case, for definiteness, we can define $\ve(A)$ as the infimum of the values $\ve(B)$ over all balls $B$ with minimal diameter containing $A$. Analogously, if $h(x,r)=r^t\,\ve(x,r)$, we define $h(A)$ as the infimum the $h(B)$'s. \\
Our next objective consists in showing that if $\phi$ is a $K$-quasiconformal planar homeomorphism and $0\leq d\leq1$, then the function defined by
$$\ve(B) = \ve_{\mu,a,t}(\phi(B))^d$$ 
for any ball $B\subset\C$, also belongs to $\cG_1\cap\cG_2$. In fact, because of the geometric properties of quasiconformal mappings and the smoothness of $\psi_{a,t}$, it is easily seen that $\ve$ satisfies \rf{eqeq1}. To show that \rf{eqsum4} also holds requires some more effort.

\begin{lemma}\label{lemtec5}
Let $\ve_{\mu,a,t}$ be as above, and let $\phi:\C\to\C$ be a $K$-quasiconformal mapping. For every $d>0$ and $K\geq 1$ there exist two positive constants $C=C(K,d)$ and $C_1=C_1(k,d)$ such that
\begin{equation}\label{eqfo2}
\sum_{j\geq0}\frac{\ve_{\mu,a,t}(\phi(B(x,2^jr)))^d}{2^{bj}}\leq C(K,d)\,\ve_{\mu ,a,t}(\phi(B(x,r)))^d
\end{equation}
whenever $0<a<C_1b$. In particular, if $b=2-t$ and $a$ is chosen small enough, the function $\ve$ defined by $\ve(B) = \ve_{\mu,a,t}(\phi(B))^d$ for 
any ball $B$, belongs to $\cG_1\cap\cG_2(t)$.  
\end{lemma}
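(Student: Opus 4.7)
The plan is to reduce the whole statement to one easily-proved scaling estimate for $\ve_{\mu,a,t}$, combined with the classical quasisymmetric distortion bound for $\phi$. First I would rewrite
$$
\ve_{\mu,a,t}(y,s) \;=\; s^a \int \frac{d\mu(z)}{|z-y|^{t+a}+s^{t+a}},
$$
and observe that, since increasing $s$ only increases the denominator of the integrand, one obtains the scaling bound
$$
\ve_{\mu,a,t}(y,\lambda s) \;\le\; \lambda^a\,\ve_{\mu,a,t}(y,s)\qquad\text{for all }\lambda\ge 1,\; y\in\C,\; s>0.
$$
This scaling estimate is the only property of $\ve_{\mu,a,t}$ I will really need; working directly with it is more efficient than trying to transport Lemma \ref{emug2} through $\phi$, because quasiconformal distortion can expand scales by as much as $2^{jK}$.

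Next I would invoke the quasisymmetric distortion of $\phi$ (Mori's theorem): writing $\rho_j := \diam \phi(B(x,2^jr))$ and $\rho_0 := \diam \phi(B(x,r))$, one has
$$
\rho_j \;\le\; C(K)\,2^{jK}\,\rho_0, \qquad j\ge 0.
$$
Since $\phi(x)\in \phi(B(x,2^jr))$, any ball of minimal diameter enclosing $\phi(B(x,2^jr))$ has radius comparable to $\rho_j$ and center at distance $\lesssim \rho_j$ from $\phi(x)$; the $\cG_1$ and doubling properties of $\ve_{\mu,a,t}$ noted earlier then yield
$$
\ve_{\mu,a,t}(\phi(B(x,2^jr))) \;\simeq\; \ve_{\mu,a,t}(\phi(x),\rho_j),
$$
with constants depending only on $K$. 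Applying the scaling estimate with $\lambda = \rho_j/\rho_0$, and using the same comparison at scale $\rho_0$, I would arrive at
$$
\ve_{\mu,a,t}(\phi(B(x,2^jr))) \;\le\; C(K)\,2^{jKa}\,\ve_{\mu,a,t}(\phi(B(x,r))).
$$
Raising to the $d$-th power, multiplying by $2^{-bj}$ and summing a geometric series,
$$
\sum_{j\ge 0} \frac{\ve_{\mu,a,t}(\phi(B(x,2^jr)))^d}{2^{bj}} \;\le\; C(K,d)\,\ve_{\mu,a,t}(\phi(B(x,r)))^d \sum_{j\ge 0} 2^{j(Kad-b)},
$$
which is finite precisely when $Kad<b$; this yields \eqref{eqfo2} with the explicit threshold $C_1(K,d) = 1/(Kd)$.

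Finally, $\cG_1$ membership of the new function $\ve(B) := \ve_{\mu,a,t}(\phi(B))^d$ follows from the same ingredients applied to a pair of comparable balls: if $|x-y|\le 2r$ and $r/2\le s\le 2r$, then two-sided quasisymmetry gives $\diam\phi(B(x,r)) \simeq \diam\phi(B(y,s))$ with centers of the minimal enclosing balls within a comparable-diameter distance, so the $\cG_1$ property of $\ve_{\mu,a,t}$ transfers to $\ve$ after the harmless $d$-th power. The only genuinely quasiconformal input is the Mori bound $\rho_j/\rho_0\le C\,2^{jK}$, and the smallness condition $a<C_1 b$ is exactly what is required to defeat the resulting $2^{jKa}$ blow-up inside the sum; I expect the only real bookkeeping to be verifying the constants in the scaling inequality and in the $\cG_1$ comparisons.
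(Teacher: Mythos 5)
Your argument is correct, but it follows a genuinely different route from the paper's. The paper first regroups the sum over $j$ into dyadic bands $d_02^k\le d_j<d_02^{k+1}$ of image diameters, uses quasi-invariance of moduli of annuli to bound the multiplicity of each band, converts $2^{-bj}$ into $2^{-C_1bk}$ via $d_j/d_0\le C(K)^j$, and then applies the numerical summation estimate of Lemma \ref{emug2} to the resulting sum in $k$; the cases $d=1$, $d>1$ and $d<1$ are then treated separately (a power trick and H\"older's inequality). You instead exploit the explicit form of $\psi_{a,t}$ to get the pointwise scaling inequality $\ve_{\mu,a,t}(y,\lambda s)\le\lambda^a\,\ve_{\mu,a,t}(y,s)$ for $\lambda\ge1$, combine it with the one-shot quasisymmetric growth bound $\rho_j\lesssim 2^{jK}\rho_0$, and sum a geometric series; this bounds each term individually, works uniformly in $d>0$ with no case analysis, and yields the threshold $a<b/(Kd)$ directly (in fact a slightly larger admissible range than the paper's when $d<1$). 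Two small points of care: the growth bound should be quoted as the quasisymmetry estimate $\eta_K(s)\le C(K)s^K$ for $s\ge1$ rather than ``Mori's theorem'' (which gives H\"older continuity with exponent $1/K$); and note that your reduction leans on the specific power-type decay of $\psi_{a,t}$, whereas the paper's regrouping argument only needs the $\cG_2$-type summability and is the template reused later (e.g.\ in Lemma \ref{lemcg}), which is presumably why the authors chose it. Since the lemma only asserts the existence of some $C_1(K,d)$, the difference in the explicit constants is immaterial.
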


\begin{proof}
Set $d_j={\rm \diam}(\phi(B(x,2^jr)))$. We have
\begin{align*}
S & =\sum_{j\geq0}\frac{\ve_{\mu,a,t}(\phi(B(x,2^jr)))^d}{2^{bj}} \simeq \sum_{j\geq0}\frac{\ve_{\mu ,a,t}(B(\phi
(x),d_j))^d}{2^{bj}}\\
&  \lesssim \sum_{k\geq0}
\sum_{j:d_0 2^{k}\leq d_j< d_02^{k+1}}\frac{\ve_{\mu ,a,t}(B(\phi(x),2^kd_0))^d}{2^{bj}}.
\end{align*}
For each $j\geq0$ we have
$$\frac{d_j}{d_0} = \prod_{i=1}^j \frac{d_i}{d_{i-1}} = \prod_{i=1}^j \frac{{\rm \diam}(\phi
(B(x,2^ir)))}{{\rm \diam}(\phi(B(x,2^{i-1}r)))}
\leq C(K)^j = 2^{C_2j},$$
with $C_2$ depending on $K$. Thus, for $j,k$ such that $d_0 2^{k}\leq d_j< d_02^{k+1}$,
$$2^j\geq \Bigl(\frac{d_j}{d_0}\Bigr)^{1/C_2}\simeq 2^{k/C_2}.$$
Since the number of $j$s such that $d_0 2^{k}\leq d_j< d_02^{k+1}$ depends only on $K$,
we obtain
$$S\lesssim \sum_{k\geq0}
\sum_{j:d_0 2^{k}\leq d_j< d_02^{k+1}} \frac{\ve_{\mu ,a,t}(B(\phi(x),2^kd_0))^d}{2^{C_1bk}} \leq C 
\sum_{k\geq0} \frac{\ve_{\mu ,a,t}(B(\phi(x),2^kd_0))^d}{2^{C_1bk}},$$
with $C_1=1/C_2$.

First we consider the case $d=1$:
from Lemma \ref{emug2}, if $0<a<C_1b$, we infer that
\begin{align}\label{eqd41}
\sum_{k\geq0} \frac{\ve_{\mu ,a,t}(B(\phi(x),2^kd_0))}{2^{C_1bk}} & = 
\sum_{k\geq0} \frac{1}{2^{(t+C_1b)k}d_0^{\,t}}\int \frac1{\biggl(\dfrac{|\phi(x)-y|}{2^kd_0}\biggr)^{t+a} + 1}\,
d\mu(y) \nonumber\\
& \lesssim \frac1{d_0^{\,t}}\int 
 \frac 1{\biggl(\dfrac{|\phi(x)-y|}{d_0}\biggr)^{t+a} + 1}\,d\mu(y) \nonumber\\ 
 & =
\ve_{\mu ,a,t}(B(\phi(x),d_0))\lesssim \ve_{\mu ,a,t}(\phi(B(x,r))),
\end{align}
and so we are done in this case.

If $d>1$, we set
$$S\lesssim \left( \sum_{k\geq0} \frac{\ve_{\mu ,a,t}(B(\phi(x),2^kd_0))}{2^{\frac{C_1}d bk}} \right)^d
\lesssim  \ve_{\mu ,a,t}(\phi(B(x,r)))^d,$$
by \rf{eqd41}, replacing $C_1$ there by $C_1/d$ (and thus assuming now that $0<a<\frac{C_1}d b$).

When $0<d<1$ we use H\"older inequality, with $p=1/d$:
\begin{align*}
\sum_{k\geq0} \frac{\ve_{\mu ,a,t}(B(\phi(x),2^kd_0))^d}{2^{C_1bk}} & \leq
\biggl(\sum_{k\geq0} \frac{\ve_{\mu ,a,t}(B(\phi(x),2^kd_0))}{2^{C_1bk}}\biggr)^d
\biggl(\sum_{k\geq0} \frac1{2^{C_1bk}}\biggr)^{1/p'}\\
& \leq C \biggl(\sum_{k\geq0} \frac{\ve_{\mu ,a,t}(B(\phi(x),2^kd_0))}{2^{C_1bk}}\biggr)^d.
\end{align*}
If we plug in this inequality the estimate obtained in \rf{eqd41}, then \rf{eqfo2} follows.

The fact that $\ve$ defined by $\ve(B) = \ve_{\mu,a,t}(\phi(B))^d$ for 
any ball $B$ belongs to $\cG_2(t)$ is a consequence of the definition of $\cG_2(t)$ in
\rf{eqsum4} and the estimate \rf{eqfo2}, choosing $b=2-t$ and $0<a<C_1b$.
\end{proof}

\bigskip


\section{Weighted bounds for the Beurling transform}

\subsection{The weight $\omega$ and the Beurling transform}

In this section, $0<t<2$ is fixed. We will prove weighted estimates for the Beurling transform. To describe the class of weights we refer to, let $\cP=\{ P_i\}_{i=1}^N$ be a family of dyadic squares such that $3P_i\cap 3P_j=\varnothing$ if $i\neq j$, and satisfying the $h$-packing condition
\begin{equation}\label{pack}
 \sum_{P\in\cP:P\subset Q} h(P)\leq C_{pack}\, h(Q)\quad \mbox{ for every 
dyadic square $Q$}.
\end{equation}
Here, $h(x,r)=r^t\,\ve(x,r)$ is any gauge function with $\ve\in{\cal G}_2$. Then the weights we are interested in are precisely the following:
\begin{equation}\label{defomega}
\omega = \sum_{P\in\cP} \frac{h(P)}{\ell(P)^2}\,\chi_P.
\end{equation}
These weights already appeared in \cite{Lacey-Sawyer-Uriarte} in the particular case
$\ve(x,r)\equiv1$. It is easy to see that if $\overline P = \cup_{i=1}^N P_i$, then $\omega$ belongs to $A_{1,\overline P}$, the local $A_1$ Muckenhoupt class. That is, for every square $Q\subset \C$,
\begin{equation}\label{eqmax2}
\frac{\omega(Q)}{\ell(Q)^2}\leq C\,\omega(x),\hspace{1cm}\text{ for almost every }x\in\overline P\cap Q.
\end{equation}
 Indeed, let $Q$ be a square containing $x\in
P_i$ such that $\ell(Q)\geq\ell(P_i)$. We have $\omega(Q)\leq C\,h(Q)$ because of the packing condition on the squares from $\mathcal P$, and then
using that $\ve\in\cG_2$ we infer that $\frac{h(x,r)}{r^2}\leq C\,\frac{h(x,s)}{s^2}$ if 
$s\leq r$, and thus
$$\frac{\omega(Q)}{\ell(Q)^2}\leq C\,\frac{h(Q)}{\ell(Q)^2} \leq C\,\frac{h(P_i)}{\ell(P_i)^2} = C\,\omega(x).$$
If $\ell(Q)<\ell(P_i)$, then it is also clear that \rf{eqmax2} holds.
From \rf{eqmax2} we obtain that if $M$ is the classical Hardy-Littlewood maximal operator, then
\begin{equation}\label{eqmax3}
M\omega(x)\leq C\,\omega(x)\hspace{1cm}\text{ for almost every }x\in\overline P.
\end{equation}

Let $M_\omega$ be the centered Hardy-Littlewood maximal function with respect to the $\omega$ measure. That is,
$$M_\omega f(x) = \sup_{r>0} \frac1{\omega(Q(x,r))}\,\int_{Q(x,r)}f(y)\,\omega(y)\,dm(y),$$
where $Q(x,r)$ stands for the square centered at $x$ with side length $2r$, and $m$ denotes the planar Lebesgue measure.
It is well known that $M_\omega$ is of weak type $(1,1)$ and strong type 
$(p,p)$, for $1<p\leq\infty$, with respect to the measure $\omega$.
From the following lemma it turns out that the same is also true for $M$:

\begin{lemma}\label{lemmaxomega}
Let $\omega$ be as above.
There exists some constant $C$ such that
$$M f(x)\leq C\, M_\omega f(x)\qquad\mbox{for $f\in L^1_{loc}(\overline P)$ and $x\in\overline P$.}$$
As a consequence, $M$ is of weak type $(1,1)$ and strong type 
$(p,p)$, $1<p\leq\infty$, with respect to $\omega$.
\end{lemma}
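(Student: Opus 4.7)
The plan is to prove the pointwise bound $Mf(x)\leq C\,M_\omega f(x)$ on $\overline P$ directly from the local $A_1$-property \eqref{eqmax2}, and then to transfer the well-known weak/strong type bounds for $M_\omega$ on $L^p(\omega)$ to $M$.

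For the pointwise domination, I fix $x\in\overline P$ and a square $Q=Q(x,r)$. Rewriting \eqref{eqmax2} as $\omega(y)\geq c\,\omega(Q)/\ell(Q)^2$ for a.e. $y\in\overline P\cap Q$, and using that $\omega$ is supported in $\overline P$ and $f$ is regarded as zero outside $\overline P$, I compute
\[
\int_Q|f|\,dm \;=\; \int_{\overline P\cap Q}|f|\,dm \;\leq\; C\,\frac{\ell(Q)^2}{\omega(Q)}\int_{\overline P\cap Q}|f|\,\omega\,dm \;=\; C\,\frac{\ell(Q)^2}{\omega(Q)}\int_Q|f|\,\omega\,dm.
\]
Dividing by $\ell(Q)^2$ and taking the supremum over $r>0$ produces $Mf(x)\leq C\,M_\omega f(x)$ (with an inessential constant arising from the interchange of balls and squares, if one insists on defining $M$ with balls).

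For the second assertion, the operator $M_\omega$ is the centered Hardy--Littlewood maximal operator with respect to the Radon measure $\omega\,dm$, so by the classical Besicovitch covering argument it is of weak type $(1,1)$ on $L^1(\omega)$ and of strong type $(p,p)$ on $L^p(\omega)$ for every $1<p\leq\infty$, with constants independent of any doubling assumption on $\omega$. Since $\omega$ is supported on $\overline P$ and the pointwise bound $Mf\leq C\,M_\omega f$ holds there, these bounds transfer immediately:
\[
\omega(\{Mf>\lambda\})\leq \omega(\{C\,M_\omega f>\lambda\})\leq \frac{C'}{\lambda}\|f\|_{L^1(\omega)},\qquad \|Mf\|_{L^p(\omega)}\leq C_p\|f\|_{L^p(\omega)}.
\]

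I expect no real obstacle: all the geometric content of the lemma (the packing condition \eqref{pack} and the $\cG_2$-property of $\ve$) has already been encoded in \eqref{eqmax2} in the earlier discussion, so the remaining argument is a one-line reversal of the averaging direction followed by the invocation of a standard maximal inequality.
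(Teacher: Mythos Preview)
Your argument is correct and is essentially the same as the paper's: both proofs use the local $A_1$ inequality \eqref{eqmax2} to bound $1/\omega$ on $\overline P\cap Q$ by $C\,\ell(Q)^2/\omega(Q)$, turning a Lebesgue average into an $\omega$-average, and then invoke the standard weak/strong bounds for $M_\omega$. The only cosmetic difference is that the paper starts with an arbitrary square $Q\ni x$ and passes to a centered square $Q'\supset Q$ with $\ell(Q')\simeq\ell(Q)$ before applying \eqref{eqmax2}, so that the resulting $\omega$-average is taken over a centered square (matching the definition of $M_\omega$); you take $Q$ centered from the outset, which is fine since centered and uncentered Hardy--Littlewood maximal functions are comparable.
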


\begin{proof}
Let $f\in L^1_{loc}(\overline P)$
and $Q$ a square containing $x\in P_i$. Consider the minimal square $Q'$ centered at $x$
containing $Q$. Since $\ell(Q')\simeq \ell(Q)$, using \rf{eqmax2} we get
\begin{align*}
\frac1{m(Q)}\int_Q|f|\,dm&\lesssim \frac1{m(Q')}\int_{Q'}|f|\,dm \\
& \lesssim \frac{\inf_{y\in Q'\cap \overline P}\omega(y)}{\omega(Q')}\,\int_{Q'}|f|\,dm\leq
\frac1{\omega(Q')}\int_{Q'}|f|\,\omega\,dm\leq M_\omega f(x).
\end{align*}
Thus $M f(x)\leq C\, Mf_\omega(x)$ and the lemma follows.
\end{proof}


\bigskip
Recall that the Beurling transform of a function $f:\C\to\C$ is given by
$$\cS f(z) = \frac{-1}{\pi}\,\pv \int_\C \frac{f(\xi)}{(z-\xi)^2}\,dm(\xi).$$ The $\epsilon$-truncated Beurling transform is
$$\cS_\epsilon f(z) = \frac{-1}{\pi} \int_{|z-\xi|>\epsilon} \frac{f(\xi)}{(z-\xi)^2}\,dm(\xi),$$
and the maximal Beurling transform, $\cS_* f(z)=\sup_{\epsilon>0}|\cS_\epsilon f(z)|.$
\medskip

\begin{proposition}\label{propo9}
Let $\cP$ be a family of dyadic squares as above, and set
$\overline P = \bigcup_{i=1}^N P_i$. If $\ve\in\cG_2$ and $\omega$ is the 
weight defined by \rf{defomega}, then the Beurling transform, is bounded in $L^p(\omega)$, for $1<p<\infty$,
and of weak type $(1,1)$ with respect to $\omega$. That is,
\begin{equation}\label{strong}
\|\cS(f\chi_{\overline P})\|_{L^p(\omega)}\leq C\,\|f\|_{L^p(\omega)}\quad \mbox{for all $f\in L^p(\omega)$},
\end{equation}
and
\begin{equation}\label{weak**}
\|\cS(f\chi_{\overline P})\|_{L^{1,\infty}(\omega)}\leq C\|f\|_{L^1(\omega)}\quad \mbox{for all $f\in L^1(\omega)$},
\end{equation}
for some $C>0$ depending on $p$ and $C_{pack}$.
\end{proposition}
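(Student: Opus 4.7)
The strategy is to implement the classical Calder\'on--Zygmund procedure for the Beurling kernel against the measure $\omega$, using its local $A_1$ character \rf{eqmax2}--\rf{eqmax3} and the packing condition \rf{pack} to replace the missing global $A_\infty$ structure. The key ingredients are: (i) the strong $(2,2)$ bound, (ii) a CZ decomposition w.r.t.\ $\omega$, and (iii) Marcinkiewicz interpolation plus duality.

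\textbf{Step 1 (strong $L^2(\omega)$).} For $x\in P_j$, split
$$\cS(f\chi_{\overline P})(x)=\cS(f\chi_{P_j})(x)+\sum_{i\ne j}\cS(f\chi_{P_i})(x).$$
Since $\omega$ is constant on each $P_j$, the local term is handled by the classical $L^2(dm)$ boundedness of $\cS$. For the non-local term, $3P_i\cap 3P_j=\varnothing$ forces $|x-\xi|\gtrsim d_{ij}:=\max(\ell(P_i),\ell(P_j),\dist(P_i,P_j))$, so
$$|\cS(f\chi_{P_i})(x)|\lesssim \frac{1}{d_{ij}^{\,2}}\int_{P_i}|f|\,dm,$$
which dominates the off-diagonal operator by a discrete kernel on $\{P_i\}$. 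Grouping on dyadic annuli around $P_j$, the packing condition gives $\sum_{P_i\subset 2^{k+1}P_j\setminus 2^kP_j}h(P_i)\lesssim h(2^{k+1}P_j)$, and the $\cG_2$ property of $\ve$ yields $\sum_k 2^{-k(2-t)}\ve(x,2^kr)\lesssim \ve(x,r)$; together with Cauchy--Schwarz in the bilinear form weighted by $h(P_i)$, this closes the $L^2(\omega)$ estimate for the non-local piece.

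\textbf{Step 2 (weak $(1,1)$).} Given $f\in L^1(\omega)$ and $\lambda>0$, perform a Calder\'on--Zygmund decomposition using $M_\omega$, obtaining disjoint dyadic cubes $\{Q_k\}$ and $f=g+\sum_kb_k$ with $\|g\|_\infty\lesssim\lambda$, $\supp b_k\subset Q_k$, $\int b_k\omega\,dm=0$, $\int|b_k|\omega\,dm\lesssim\lambda\,\omega(Q_k)$, and $\sum_k\omega(Q_k)\lesssim\lambda^{-1}\|f\|_{L^1(\omega)}$. The good part is controlled by Step 1 via $\|g\|_{L^2(\omega)}^2\le\|g\|_\infty\|g\|_{L^1(\omega)}\lesssim\lambda\|f\|_{L^1(\omega)}$. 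For the bad part, the cancellation of $b_k$ and the smoothness of the Beurling kernel give
$$|\cS(b_k\chi_{\overline P})(x)|\lesssim\int_{Q_k}\frac{\ell(Q_k)\,|b_k(\xi)|}{|x-z_k|^3}\,dm(\xi),\qquad x\notin 2Q_k,$$
where $z_k$ is the center of $Q_k$. Integrating against $\omega$ on dyadic annuli around $z_k$ and invoking \rf{eqmax2} (so $\omega$ of each annulus is controlled by $\ell(Q_k)^2\omega(Q_k)/\ell(Q_k)^2$ times a geometric factor) yields
$$\int_{\C\setminus 2Q_k}|\cS(b_k\chi_{\overline P})|\,\omega\,dm\lesssim \int_{Q_k}|b_k|\omega\,dm,$$
and together with $\omega(\bigcup_k 2Q_k)\lesssim\lambda^{-1}\|f\|_{L^1(\omega)}$ (another consequence of \rf{eqmax2}) this closes the weak $(1,1)$ estimate.

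\textbf{Step 3 and main obstacle.} Marcinkiewicz interpolation between Step 1 and Step 2 yields the strong $(p,p)$ bound for $1<p\le 2$, and $2<p<\infty$ follows by duality since $\cS^*$ (essentially $\cS$ up to conjugation) enjoys the same estimates. The technical heart of the argument is the weighted annular estimate used in Steps 1 and 2: because $\omega$ vanishes outside $\overline P$ and is not a global $A_\infty$ weight, standard weighted CZ theory is unavailable and one must, at each dyadic scale, combine the pointwise local $A_1$ inequality \rf{eqmax3} with the $\cG_2$ decay of $\ve$ and the packing condition \rf{pack} to dominate $\omega$-integrals of $|x-\xi|^{-2}$ and $|x-\xi|^{-3}$. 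This is also the reason the hypothesis $\ve\in\cG_2$ (rather than the weaker $\cG_1$) is essential.
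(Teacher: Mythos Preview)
Your strategy differs from the paper's. The paper first establishes the weak $(1,1)$ inequality for the \emph{maximal} Beurling transform $\cS_*$ directly---with no Calder\'on--Zygmund decomposition, just the local/non-local split you describe in Step~1 together with Chebyshev---and then obtains all the $L^p(\omega)$ bounds at once via a good-$\lambda$ inequality comparing $\cS_*f$ with $M_\omega f$. This route avoids two difficulties in your proposal.

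In Step~2, the Calder\'on--Zygmund decomposition with respect to $\omega$ produces $\int b_k\,\omega\,dm=0$, but the kernel-smoothness estimate $|\cS(b_k)(x)|\lesssim\ell(Q_k)\,|x-z_k|^{-3}\int|b_k|\,dm$ requires the Lebesgue cancellation $\int b_k\,dm=0$. These coincide only when $Q_k$ lies inside a single $P_i$ (so that $\omega$ is constant there); for larger CZ cubes you have no cancellation and must treat that case separately. It is fixable---the crude $|x-z_k|^{-2}$ bound still closes using $\cG_2$ and the inequality $h(Q_k)/\ell(Q_k)^2\lesssim\omega$ on $Q_k\cap\overline P$---but you have not said this.

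More seriously, the duality in Step~3 does not work as stated. The adjoint of $f\mapsto\chi_{\overline P}\,\cS(f\chi_{\overline P})$ with respect to the $L^2(\omega)$ pairing is $g\mapsto\chi_{\overline P}\,\omega^{-1}\overline{\cS}(g\omega)$, not $\overline{\cS}$ itself; this is a genuinely different weighted singular integral, and your Steps~1--2 say nothing about it. Hence you cannot pass from $1<p\le2$ to $p>2$ by duality. The paper's good-$\lambda$ argument gives all $1<p<\infty$ simultaneously and sidesteps this; alternatively, note that the off-diagonal piece in your Step~1 is in fact pointwise dominated by $Mf\lesssim M_\omega f$ on $\overline P$ (this is exactly the packing${}+{}\cG_2$ computation), which together with the local piece (classical $L^p$ boundedness on each $P_i$, where $\omega$ is constant) yields the strong type for every $p>1$ directly, without interpolation or duality.
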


It is possible to prove the estimate \rf{strong} (which, for $p=2$, is the one needed in connection with quasiconformal distortion) by an appropriate modification of the arguments of 
\cite{Lacey-Sawyer-Uriarte}. 
 However we have preferred to follow a new approach:
 first
we will show the following weak type inequality, which is stronger than \rf{weak**}:
\begin{equation}\label{weak}
\|\cS_*(f\chi_{\overline P})\|_{L^{1,\infty}(\omega)}\leq C\|f\|_{L^1(\omega)}
\quad \mbox{for all $f\in L^1(\omega)$}.
\end{equation}
Then, by means of a \emph{good lambda} inequality, we will deduce that the maximal Beurling transform is bounded in $L^p(\omega)$, for $1<p<\infty$, that is
\begin{equation}\label{eqstrong*}
\|\cS_*(f\chi_{\overline P})\|_{L^p(\omega)}\leq C\,\|f\|_{L^p(\omega)}.
\end{equation}
Clearly, \eqref{strong} follows from this estimate.
We prove \eqref{weak} in the next Lemma.

\begin{lem}\label{weakbstar}
 We have
$$\omega\left(\left\{z\in\C:|\cS_* f(z)|>\lambda\right\}\right)\leq\frac{C}{\lambda}\,\|f\|_{L^1(\omega)}$$
for every $f\in L^1(\omega)$ and $\lambda>0$. 
\end{lem}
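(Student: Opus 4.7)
The strategy I would adopt is to exploit the very explicit structure of $\omega$: it is a scalar multiple of Lebesgue measure on each $P\in\cP$, and its support consists of the $3$-separated cubes $\{P\}_{P\in\cP}$.  After standard reductions (to $f\geq 0$ with $f = f\chi_{\overline P}$, bounded with compact support), I split $f = \sum_{P\in\cP} f_P$ with $f_P := f\chi_P$ and observe that, by the disjointness of the $3P$'s, for any $x\in P$ and any $P'\in\cP\setminus\{P\}$ the cube $P'$ sits at distance comparable to $D_{P,P'} := \ell(P)+\ell(P')+\dist(P,P')$ from $x$.  Thus no truncation issue arises in evaluating $\cS_\epsilon(f_{P'})(x)$, and one gets the pointwise bound
$$
\cS_* f(x) \;\leq\; \cS_* f_P(x) + \sum_{P'\neq P}\int_{P'}\frac{|f(y)|}{|x-y|^2}\,dm(y),\qquad x\in P.
$$
Since $\omega$ is supported in $\overline P$ and the $P$'s are disjoint, it suffices to estimate the $\omega$-measure of the level sets of the two summands on each $P$.

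For the local summand, the weight $\omega|_P$ is a constant multiple of Lebesgue measure, with constant $h(P)/\ell(P)^2$.  Therefore the classical weak-type $(1,1)$ estimate for $\cS_*$ with respect to Lebesgue measure (standard Calder\'on--Zygmund theory for the Beurling kernel), combined with the trivial identity $\|f_P\|_{L^1(\omega)} = (h(P)/\ell(P)^2)\|f_P\|_{L^1(m)}$, yields
$$
\omega\bigl(\{x\in P:\, \cS_* f_P(x)>\lambda/2\}\bigr)\;\lesssim\; \lambda^{-1}\|f_P\|_{L^1(\omega)},
$$
and summing in $P\in\cP$ produces the desired global bound $\lambda^{-1}\|f\|_{L^1(\omega)}$.

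For the far summand, I would apply Chebyshev with respect to $\omega$ and the pointwise estimate $\int_{P'}|f|/|x-y|^2\,dm(y)\lesssim \|f_{P'}\|_{L^1(m)}/D_{P,P'}^2$, together with $\omega(P)=h(P)$, to obtain
$$
\sum_P \omega\!\left(\Bigl\{x\in P:\sum_{P'\neq P}\int_{P'}\tfrac{|f|}{|x-y|^2}\,dm > \tfrac{\lambda}{2}\Bigr\}\right)\;\leq\; \frac{C}{\lambda}\sum_{P'}\|f_{P'}\|_{L^1(m)}\sum_{P\neq P'}\frac{h(P)}{D_{P,P'}^2}.
$$
The key claim is then $\sum_{P\neq P'} h(P)/D_{P,P'}^2 \lesssim h(P')/\ell(P')^2$, after which inserting $\|f_{P'}\|_{L^1(m)} = (\ell(P')^2/h(P'))\|f_{P'}\|_{L^1(\omega)}$ gives $C\lambda^{-1}\|f\|_{L^1(\omega)}$.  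I would prove this claim by grouping the $P$'s into dyadic shells $T_k = \{P\neq P':\, 2^k\ell(P')\leq D_{P,P'}<2^{k+1}\ell(P')\}$; the $3$-separation of $\cP$ ensures that each $T_k$ lies inside a fixed dilation $C\cdot 2^k P'$, so the packing condition~\rf{pack} gives $\sum_{P\in T_k} h(P)\lesssim h(C\cdot 2^k P')$, and the $\cG_2$ property, which in its equivalent form reads $\sum_k h(x,2^k r)/(2^k r)^2\lesssim h(x,r)/r^2$, yields $\sum_k h(C\cdot 2^k P')/(2^k\ell(P'))^2\lesssim h(P')/\ell(P')^2$.

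The main obstacle is precisely this decay estimate $\sum_{P\neq P'} h(P)/D_{P,P'}^2 \lesssim h(P')/\ell(P')^2$.  It is the step where the three structural ingredients of the weight $\omega$ --- the $3$-separation of the cubes of $\cP$, the packing condition, and the $\cG_2$ decay of $\ve$ --- must be brought to bear simultaneously, jointly compensating for the absence of a global doubling property of $\omega$.
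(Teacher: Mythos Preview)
Your proposal is correct and follows essentially the same route as the paper's proof: both decompose the level set cube by cube, handle the local piece $\cS_* f_P$ via the classical Lebesgue weak $(1,1)$ inequality for $\cS_*$ (using that $\omega$ is a constant multiple of $m$ on each $P$), and control the far piece by Chebyshev together with the key decay estimate $\sum_{P\neq P'} h(P)/D_{P,P'}^2 \lesssim h(P')/\ell(P')^2$, proved via dyadic shells, the packing condition~\rf{pack}, and the $\cG_2$ property. The only cosmetic difference is that the paper writes the far-part distance as $|z_i-z_j|$ between centers (which is comparable to your $D_{P,P'}$ thanks to the $3$-separation), and phrases the local/far split as $f\chi_{2P_i}$ versus $f\chi_{\C\setminus 2P_i}$ --- which, since $\supp f\subset\overline P$ and the $3P$'s are disjoint, coincides with your $f_P$ versus $f-f_P$.
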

\begin{proof}
We have
$$\aligned
&\omega\left\{z\in\C:|\cS_* f(z)|\geq \lambda\right\}=\sum_i\omega\left\{z\in P_i: |\cS_* f(z)|\geq \lambda\right\}\\
&\leq
\sum_i \omega\left\{z\in P_i: |\cS_*(f\chi_{2P_i})(z)|\geq\frac{\lambda}{2}\right\}+
\sum_i \omega\left\{z\in P_i: |\cS_*(f\chi_{\C\setminus 2P_i})(z)|\geq\frac{\lambda}{2}\right\}\\
&=A+B.
\endaligned$$
For $A$ we just use the boundedness of $\cS_*:L^1\to L^{1,\infty}$ with respect to Lebesgue measure,
$$
\aligned
A=\sum_i \omega\Bigl\{z\in P_i: |\cS_*(f\chi_{2P_i})(z)| & \geq\frac{\lambda}{2}\Bigr\}
\\ &
= \sum_i \frac{h(P_i)}{\ell(P_i)^2}\,\Bigl|\Bigl\{z\in P_i: |\cS_*(f\chi_{2P_i})(z)|\geq\frac{\lambda}{2}
\Bigr\}\Bigr|
\\
&\leq 2\|\cS_*\|_{L^1\to L^{1,\infty}}\sum_i \frac{h(P_i)}{\ell(P_i)^2}\, \frac1\lambda
\int |f\chi_{2P_i}|\,dm\\
&= 2\|\cS_*\|_{L^1\to L^{1,\infty}}\, \frac{\|f\|_{L^1(\omega)}}{\lambda}
\endaligned
$$ 
because the squares $3P_i$ are disjoint and $\omega$ coincides with the Lebesgue measure times
$\frac{h(P_i)}{\ell(P_i)^2}$ on every $P_i$. For the remaining term, denoting the center of $P_i$
by $z_i$, one has
$$\aligned
B&=\sum_i \omega\left\{z\in P_i: |\cS_*(f\chi_{\C\setminus 2P_i})(z)|\geq\frac{\lambda}{2}\right\}\\
&\leq\frac2\lambda\sum_i \int_{P_i} |\cS_*(f\chi_{\C\setminus 2P_i})(z)|\,d\omega(z)\\
&\leq\frac2\lambda\sum_i\int_{P_i} \sum_{j\neq i}\int_{P_j}\frac{|f(\xi)|}{|\xi-z|^2}\,dm(\xi)\,d\omega(z)\\
&\leq C\,\frac2\lambda\sum_{j}\left(\sum_{i\neq j}\frac{\omega(P_i)}{|z_j-z_i|^2}\right)\int_{P_j}|f(\xi)|\,dm(\xi).
\endaligned$$
Using the $h$-packing condition \eqref{pack} and the ${\cal G}_2$ condition for $h$, we get
$$
\aligned
\sum_{i\neq j}\frac{\omega(P_i)}{|z_j-z_i|^2}&=\sum_{k=2}^\infty\,\sum_{i: z_i\in 2^kP_j\setminus 2^{k-1}P_j}\frac{h(P_i)}{|z_i-z_j|^2}\leq C\,\sum_{k=2}^\infty\,\sum_{i: P_i\subset 2^{k+1}P_j}\frac{h(P_i)}{(2^k\ell(P_j))^2}\\
&= C\,\sum_{k=2}^\infty \frac{1}{(2^k\ell(P_j))^{2}}\sum_{i: P_i\subset 2^{k+1}P_j}h(P_i)\leq C\,\sum_{k=2}^\infty \frac{h(2^kP_j)}{(2^k\ell(P_j))^{2}}\leq C\,\frac{h(P_j)}{\ell(P_j)^{2}},\\
\endaligned
$$
where we used the fact that if $z_i\in  2^kP_j$, then $P_i\subset 2^{k+1}P_j$ (because 
$3P_i\cap3P_j=\varnothing$). Thus, 
$$B\leq \frac{C}{\lambda}\,\|f\|_{L^1(\omega)}.$$
 The Lemma follows since both $A$, $B$ are bounded by constant multiples of $\frac1\lambda\,\|f\|_{L^1(\omega)}$.
\end{proof}

\bigskip

\begin{proof}[\bf Proof of the boundedness of $\mathcal S_*$ in $L^p(\omega)$, for $1<p<\infty$]
Our goal here is to obtain the following \emph{good lambda} inequality,
\begin{equation}\label{gl}
\omega\bigg(\left\{z: \cS_* f(z)>10\lambda, M_\omega f(z)\leq\gamma\lambda\right\}\bigg)\leq C\,\gamma\,\omega\bigg(\left\{z:\cS_* f(z)>\lambda\right\}\bigg),
\end{equation}
for every $\lambda>0$, and some $\gamma$ small enough. Recall that $M_\omega$ denotes the centered Hardy-Littlewood maximal function with respect to the $\omega$ measure.
By standard arguments, the preceding estimate implies that
$$\|{\mathcal S}_*f\|_{L^p(\omega)}\leq C_p
\|M_\omega f\|_{L^p(\omega)},$$
for $0< p <\infty$. Since $M_\omega$ is bounded in $L^p(\omega)$
for $p>1$, this implies that
$\mathcal S_*$ is bounded in $L^p(\omega)$, $1<p<\infty$. 

\noindent To get \eqref{gl}, let us denote $\Omega_\lambda=\{\cS_* f>\lambda\}$, and let
$$\Omega_\lambda=\bigcup_{j=1}^\infty Q_j$$
be a Whitney decomposition of $\Omega_\lambda$. That is, $\sum_j\chi_{10Q_j}\leq C$, and for every $j$ we have that $100 Q_j\subset \Omega_\lambda$ but $1000 Q_j\nsubseteq\Omega_\lambda$. Let $Q_j$ be a fixed Whitney cube, and assume that there exists $z_j\in Q_j$ such that $M_\omega f(z_j)\leq \gamma\lambda$ (otherwise there is nothing to prove). Let $t_j\in\C\setminus\Omega_\lambda$ the closest point to $Q_j$ in $\C\setminus\Omega_\lambda$. Let $B=B(t_j,c_0\ell(Q_j))$ be a ball centered at $t_j$ and such that $10Q_j\subset B$. We can decompose
$$f=f\,\chi_B+f\,\chi_{\C\setminus B}.$$
For every $z\in Q_j$, the truncated singular integral 
$\cS_\epsilon (f\chi_B)(z)$ can be written as the sum of two terms,
$$\aligned
\cS_\epsilon (f\chi_B)(z)&=\int_{|t-z|\geq \epsilon}\frac{f(t)\,\chi_{B\cap 3B_j}(t)}{(t-z)^2}\,dm(t)+\int_{|t-z|\geq \epsilon}\frac{f(t)\,\chi_{B\setminus 3B_j}(t)}{(t-z)^2}\,dm(t)=I+II,
\endaligned$$
where $B_j=B(z_j,\ell(Q_j))$. For $II$, we use that $|t-z|\simeq \ell(Q_j)$ 
and Lemma \ref{lemmaxomega} to get
\begin{equation}\label{eq1}
II\leq \frac{C}{\ell(Q_j)^2}\int_{B\setminus 3B_j}|f(t)|\,dm(t)\leq C\,M f(z_j)
\leq C\,M_\omega f(z_j)\leq C\,\gamma\lambda,
\end{equation}
and this is uniform in $\epsilon$. Therefore, since $\cS_* f(z)\leq \cS_*(f\chi_B)(z)+\cS_*(f\chi_{\C\setminus B})(z)$, we have 
\begin{equation}\label{A+B}
\aligned
\omega \bigg(\big\{z\in Q_j: &\cS_* f(z)>10\lambda, M_\omega f(z)\leq\gamma\lambda\big\}\bigg)\\
\leq
&\;\omega\bigg(\left\{z\in Q_j: \cS_* (f\chi_B)(z)>2\lambda, M_\omega f(z)\leq\gamma\lambda\right\}\bigg)\\
&+\omega\bigg(\left\{z\in Q_j: \cS_* (f\chi_{\C\setminus B})(z)>8\lambda, 
M_\omega f(z)\leq\gamma\lambda\right\}\bigg)=A+B.
\endaligned
\end{equation}
Choosing $\gamma$ so that $C\gamma<1$ in \eqref{eq1}, we get that
$$
A\leq \omega\bigg(\left\{z\in Q_j: \cS_* (f\chi_{B\cap 3B_j})(z)>\lambda, M_\omega f(z)\leq\gamma\lambda\right\}\bigg).
$$
On the other hand, by Lemma \ref{weakbstar}, 
$$
\omega\left\{\cS_* (f\chi_{B\cap 3B_j})(z)>\lambda\right\}\leq \frac{C}{\lambda}\,\|f\chi_{B\cap 3B_j}\|_{L^1(\omega)}\leq\frac{C\,M_\omega f(z_j)\,\omega(B\cap 3B_j)}{\lambda}.
$$
Therefore
\begin{equation}\label{eqA}
 A\leq C\,\gamma\,\omega(B\cap 3B_j)\leq C\,\gamma\,\omega (7Q_j).
\end{equation}
To estimate $B$, notice that, for $z\in Q_j$,
$$
\aligned
\left|\cS_\epsilon(f\chi_{\C\setminus B})(z)-\cS_\epsilon(f\chi_{\C\setminus B})(t_j)\right|
&\leq\int_{\C\setminus B}|f(t)|\left|\frac{1}{(t-z)^2}-\frac{1}{(t-t_j)^2}\right|\,dm(t)\\
&\leq C\,\ell(Q_j)\int_{\C\setminus B}\frac{|f(t)|\,|t-\frac{z+t_j}{2}|}{|t-z|^2\,|t-t_j|^2}dm(t)\\
&\leq C\,\ell(Q_j)\int_{\C\setminus B}\frac{|f(t)|}{|t-z_j|^3}dm(t)\\
&\leq C\,\ell(Q_j)\int_{\C\setminus 3B_j}\frac{|f(t)|}{|t-z_j|^3}\,dm(t)\\
&\leq C\,Mf(z_j)\leq C\,M_\omega f(z_j).
\endaligned
$$
The next to the last step above follows as usually after decomposing into dyadic annuli. Since $B\supset B(z,3\ell(Q_j))$, to compute $\cS_*(f\chi_{\C\setminus B})(z)=\sup_{\epsilon>0}|\cS_\epsilon(f\chi_{\C\setminus B})(z)|$ it suffices to take $\epsilon>3\ell(Q_j)$. Therefore
$$
\aligned
|\cS_\epsilon(f\chi_{\C\setminus B})(t_j)-\cS_\epsilon(f)(t_j)|&=\left|\int_{|t-t_j|>\epsilon}\frac{f(t)\,\chi_B(t)}{(t-t_j)^2}dm(t)\right|\\
&\leq\frac{C}{\ell(Q_j)^2}\int_{|t-t_j|>\epsilon}|f(t)|\,\chi_B(t)dm(t)\\
&=\frac{C}{\ell(Q_j)^2}\int_{\epsilon<|t-t_j|<c_0\ell(Q_j)}|f(t)|\,dm(t)\\
&\leq\frac{C}{\ell(Q_j)^2}\int_{|t-z_j|\leq 2c_0\ell(Q_j)}|f(t)|dm(t)\leq C\,Mf(z_j).
\endaligned
$$
Summarizing, we get
$$
\aligned
|\cS_\epsilon&(f\chi_{\C\setminus B})(z)|\\
&\leq |\cS_\epsilon(f\chi_{\C\setminus B})(z)-\cS_\epsilon(f\chi_{\C\setminus B})(t_j)|+|\cS_\epsilon(f\chi_{\C\setminus B})(t_j)-\cS_\epsilon f(t_j)|+|\cS_\epsilon f(t_j)|\\
&\leq C M_\omega f(z_j)+|\cS_\epsilon f(t_j)|.
\endaligned
$$
Therefore, if $z$ belongs to $\left\{z: \cS_* (f\chi_{\C\setminus B})(z)>8\lambda,  M_\omega f(z)\leq\gamma\lambda\right\}$, we get that
$$
\aligned
8\lambda<|\cS_*(f\chi_{\C\setminus B})(z)|\leq C M_\omega f(z_j)+|\cS_* f(t_j)|\leq C\gamma\lambda+\lambda.
\endaligned
$$
because $t_j\notin\Omega_\lambda$. In particular, back to \eqref{A+B}, for small enough $\gamma$ we must have $B=0$. Now, with the help of \eqref{eqA} we get
$$
\omega\bigg(\left\{z\in Q_j: \cS_* f(z)>10\lambda, M_\omega f(z)\leq\gamma\lambda\right\}\bigg)\leq C\,\gamma\,\omega(7Q_j).
$$
Since the squares $7Q_j$ have bounded overlap, summing in $j$ we obtain
$$\aligned
\omega\bigg(\left\{z: \cS_* f(z)>10\lambda, M_\omega f(z)\leq\gamma\lambda\right\}\bigg)
\leq C\,\gamma\,\omega\left(\left\{z:\cS_* f(z)>\lambda\right\}\right),
\endaligned$$
which is \eqref{gl}. 
\end{proof}

\bigskip


\section{Quasiconformal distortion of $h$-contents}

\subsection{Conformal outside}
Let $\phi$ be a $K$-quasiconformal mapping on $\C$, and let $\ve_0\in\cG_1$. Consider the associated gauge function $h_0(x,r) = r^t\,\ve_0(x,r)$, for a fixed $0<t<2$.\\
\\
For every $x\in\C$ and $r>0$, denote
$$\ve(x,r) = \ve_0(\phi(B(x,r)),\qquad
h(x,r) = r^t\,\ve(x,r),$$ and suppose that $\ve\in\cG_2$.
This fact is crucial in this subsection.\\
\\
Let now ${\cal P}=\{P_i\}$ be a finite family of dyadic squares, with disjoint triples, and satisfying the packing condition 
\begin{equation}\label{packingconstant}
\sum_{P\in\cP:P\subset Q} h(P)\leq C_{pack}\, h(Q)\quad \mbox{ for every dyadic square $Q$}.
\end{equation}
If we introduce the weight
$$\omega=\sum_{P\in{\cal P}}\frac{h(P)}{|P|}\,\chi_P\simeq\sum_{P\subset\cP}\frac{\ve(P)}{\ell(P)^{2-t}}\,\chi_P$$
then it follows from Proposition \ref{propo9} that the Beurling transform is bounded in $L^2(\omega)$,
$$\|{\cal S}(f\chi_{\overline P})\|_{L^2(\omega)}\leq \|\cal S\|_{L^2(\omega)}\,\|f\|_{L^2(\omega)}$$
and the norm $\|{\cal S}\|_{L^2(\omega)}$ depends only on the packing constant $C_{pack}$. Therefore there exists a number $\delta>0$, depending only on $C_{pack}$, such that
\begin{equation}\label{Kpetit}
1\leq K<1+\delta\hspace{1cm}\Rightarrow \hspace{1cm}\frac{K-1}{K+1}\,\|{\cal S}\|_{L^2(\omega)}<1.
\end{equation}

In establishing quasiconformal distortion estimates for the $h$-contents we need to 
normalize the mappings. A very convenient normalization is obtained by requiring that $\phi$ is
conformal outside the unit disk and has the development
$$\phi(z) = z + {\mathcal O}(1/z),\qquad |z| > 1.$$
We call such $\phi$ as {\it principal quasiconformal mapping}.

\begin{lemma} \label{lemkpetit} 
Let $\phi,\ve_0,\ve,\cP,\omega$ be as above and suppose moreover that the quasiconformal mapping $\phi$ is principal and conformal
outside $\overline P=\bigcup_{P\in\cP} P$. Then we have
$$\sum_{P\in\cP}h_0(\phi(P))\leq C\,\sum_{P\in\cP} h(P),$$
for some constant $C=C(K)>0$.
\end{lemma}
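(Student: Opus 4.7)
The plan is to reduce the estimate to an $L^2(\omega)$ bound for $\partial\phi$ by H\"older's inequality, and then solve the Beltrami equation in $L^2(\omega)$ using the boundedness of the Beurling transform from Proposition \ref{propo9}. As a preliminary observation, since $\phi$ is $K$-quasiconformal the image $\phi(P)$ of each square has bounded eccentricity, so $\diam(\phi(P))^2\simeq|\phi(P)|$ with constants depending only on $K$; combining this with $\ve_0\in\cG_1$ gives
$$h_0(\phi(P))\simeq\diam(\phi(P))^t\,\ve_0(\phi(P))\simeq\diam(\phi(P))^t\,\ve(P).$$

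Writing this last expression as $h(P)\bigl(\diam(\phi(P))/\ell(P)\bigr)^t$ and applying H\"older on the sum over $P\in\cP$ with exponents $p=2/t$ and $p'=2/(2-t)$ yields
$$\sum_P h_0(\phi(P))\lesssim \Bigl(\sum_P h(P)\Bigr)^{(2-t)/2}\Bigl(\sum_P \omega_P\,\diam(\phi(P))^2\Bigr)^{t/2},$$
where we used that $\omega$ is constant equal to $\omega_P=h(P)/\ell(P)^2$ on each $P$. Using bounded eccentricity once more, $\diam(\phi(P))^2\simeq|\phi(P)|=\int_P J_\phi\le\int_P|\partial\phi|^2$, so the second bracket is bounded by $C\,\|\partial\phi\|_{L^2(\omega)}^2$. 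Thus it suffices to prove $\|\partial\phi\|_{L^2(\omega)}^2\lesssim\omega(\overline P)=\sum_P h(P)$.

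For this I would exploit the normalization: since $\phi$ is principal and conformal outside $\overline P$, $\bar\partial\phi$ is supported in $\overline P$, it satisfies $\bar\partial\phi=\mu\,\partial\phi$ with $\|\mu\|_\infty\le k:=(K-1)/(K+1)$, and the classical identity $\partial\phi=1+\cS(\bar\partial\phi)$ holds. Taking $L^2(\omega)$-norms and applying Proposition \ref{propo9} gives
$$\|\partial\phi\|_{L^2(\omega)}\le \omega(\overline P)^{1/2}+k\,\|\cS\|_{L^2(\omega)}\,\|\partial\phi\|_{L^2(\omega)},$$
and the hypothesis $k\,\|\cS\|_{L^2(\omega)}<1$ from \rf{Kpetit} lets me absorb the last term on the left, producing $\|\partial\phi\|_{L^2(\omega)}\lesssim\omega(\overline P)^{1/2}$. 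Plugging this back into the H\"older estimate closes the argument.

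The main obstacle I anticipate is the a priori finiteness of $\|\partial\phi\|_{L^2(\omega)}$, which is needed in order to perform the absorption. This should follow because $\overline P$ is a compact union of finitely many dyadic squares, the weight $\omega$ is bounded above on $\overline P$ by $\max_i h(P_i)/\ell(P_i)^2<\infty$, and $\partial\phi\in L^2_{\rm loc}$ for any principal $K$-quasiconformal map (in fact $\partial\phi\in L^p$ for every $p<2K/(K-1)$ by Astala's theorem). A secondary technical point is relating $h_0(\phi(P))$, which by the convention of Section \ref{Hh} uses the ball of minimal diameter containing $\phi(P)$, with $\ve(P)=\ve_0(\phi(B))$ for the circumscribed ball $B$ of $P$; this will be handled by the $\cG_1$-doubling of $\ve_0$ together with the fact that $K$-quasiconformal images of balls and squares have bounded eccentricity.
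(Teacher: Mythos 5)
Your proposal is correct and follows essentially the same route as the paper: the same reduction $h_0(\phi(P))\simeq\diam(\phi(P))^t\ve(P)$, the same H\"older split with exponents $2/t$ and $2/(2-t)$, and the same use of the weighted Beurling bound together with \eqref{Kpetit} to control the weighted $L^2$ norm of the derivative by $\omega(\overline P)^{1/2}$. The only cosmetic difference is that the paper expands $g=\bar\partial\phi$ as a Neumann series and estimates $\int J\,\omega\,dm$ term by term, whereas you absorb $k\|\cS\|_{L^2(\omega)}\|\partial\phi\|_{L^2(\omega)}$ directly (correctly noting the a priori finiteness needed for that step); these are equivalent.
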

\begin{proof}
First of all, by the definition of $\ve_0$ on arbitary sets, we see that
$$
h_0(\phi(P_i))=\inf_{\phi(P_i)\subset B} h_0(B)=\inf_{\phi(P_i)\subset B(x,r)} r^t\,\ve_0(x,r)\simeq \diam(\phi(P_i))^t\,\ve(P_i),
$$
with constants that only depend on $K$. Thus, by H\"older's inequality we get
\begin{align*}
\sum_i  h_0(\phi(P_i))
&\leq C_K\,\sum_i \diam(\phi(P_i))^t\ve(P_i)\\
&\leq C_K\Bigl(\sum_i \diam(\phi(P_i))^2\ell(P_i)^{t-2}\ve(P_i)\Bigl)^{t/2}\Bigl(\sum_i\ell(P_i)^t\ve(P_i)\Bigr)^{\frac{2-t}2}\\
& = C_K\biggl(\sum_i \diam(\phi(P_i))^2\,\frac{\omega(P_i)}{\ell(P_i)^2}\biggl)^{t/2}\Bigl(\sum_i \omega(P_i)\Bigr)^{\frac{2-t}2}
=: C_K \,A^{t/2}\, \omega(\overline P)^{\frac{2-t}2}.
\end{align*}
To estimate $A$, we start by getting from quasisymmetry that
\begin{align*}
A & \leq C_K \sum_i \int_{P_i} J(z,\phi)\,\omega(z)\,dm(z) 
\end{align*}
Now, as $\phi$ is a principal quasiconformal mapping, $\phi(z)-z={\cal C}g(z)$, where ${\cal C}g$ is the Cauchy transform of the Neumann series
$$g=\sum_{n=0}^\infty(\nu\S)^n(\nu),$$
and $\nu(z)=\frac{\overline\partial\phi(z)}{\partial\phi(z)}$ whenever $\partial\phi(z)\neq 0$ (otherwise we simply set $\nu(z)=0$). 
Since $K<1+\delta$, \eqref{Kpetit} says that the above series converges absolutely in $L^2(\omega)$
 (using the key fact that $\supp(\nu)\subset \overline P$ by the conformality of $\phi$ off $\overline{P}$), and moreover one easily gets
 $$\|g\|_{L^2(\omega)}\leq 
 \frac{\|\nu\|_\infty}{1-\|\nu\|_\infty\,\|{\S}\|_{L^2(\omega)}}\,\omega({\overline{P}})^{1/2}
 .$$
Then arguing as in \cite{Lacey-Sawyer-Uriarte} (see also \cite{astalaareadistortion}) 
$$\aligned
\sum_i\int_{P_i} J(z,\phi)\,\omega(z)\,&dm(z)=\int_{\overline{P}}
\Bigl(|\partial \phi(z)|^2-|\bar\partial\phi(z)|^2\Bigr)\,\omega(z)\,dm(z)
\\
&= \int_{\overline{P}}\left( 1+|{\S}g(z)|^2+2\Re({\S}g(z))-|g(z)|^2\right)\,\omega(z)\,dm(z)\\
&\leq \left(\omega({\overline{P}}) + \|{\S}\|_{L^2(\omega)}^2\,\|g\|_{L^2(\omega)}^2+2 \omega({\overline{P}})^\frac12\,\|{\S}\|_{L^2(\omega)}\,\|g\|_{L^2(\omega)}\right)\\
&\leq \omega({\overline{P}})\left( 1 + \,\frac{\|{\S}\|_{L^2(\omega)}^2\,\|\nu\|_\infty^2}{\left(1-\|\nu\|_\infty\,\|{\S}\|_{L^2(\omega)}\right)^2}+\frac{2\,\|{\S}\|_{L^2(\omega)}\,\|\nu\|_\infty}{1-\|\nu\|_\infty\,\|{\S}\|_{L^2(\omega)}}\right)\\
&\leq C(K)\,\omega({\overline{P}}),
\endaligned
$$
by \rf{Kpetit}. Thus,
$$\sum_{P\in\cP}h_0(\phi(P))\leq C(K)\, \omega(\overline P)=C(K)\,\sum_{P\in\cP}h(P),$$
and so the lemma follows.
\end{proof}


\subsection{Conformal inside}

We will prove now an h-version of \cite[Theorem 2.2]{ACMOU}. Here
the point is to use quasiconformal mappings that are conformal inside a finite disjoint union
of quasidisks, allowing improved integrability for the gradient \cite{astalanesi}. Let us
emphasize that no $\cG_2$ assumption will be needed here.

\begin{theo}\label{nesi}
Let $\phi:\C\to\C$ be $K$-quasiconformal, principal, and conformal outside $\D$. Assume that $Q_i\subset\D$ are pairwise disjoint $K$-quasidisks, and that $\phi$ is conformal in $\Omega=\cup_iQ_i$. For a fixed $\varepsilon_0\in{\cal G}_1$ and $0<t<2$, let $t'=\frac{2Kt}{2+(K-1)t}$ and $h_0(r)=r^{t'}\ve_0(x,r)$. Set
$$\ve(B)=\ve_0(\phi(B))^\frac{Kt}{t'}\hspace{1cm}\text{and}\hspace{1cm} h(r)=r^{t}\,\ve(x,r).$$ 
Then
$$
\sum_i h_0(\phi(Q_i))\leq C(K,t)\,\left(\sum_i h(Q_i)\right)^\frac{t'}{Kt}.
$$
\end{theo}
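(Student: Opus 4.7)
The goal is to estimate $\sum_i h_0(\phi(Q_i))$. My first step would be to rewrite the left-hand side using the hypothesis $\ve_0\in\cG_1$ and the fact that $\phi$ is $K$-quasiconformal: since $\phi(Q_i)$ has a diameter comparable (with constants depending only on $K$) to that of the smallest ball containing it, the definition of $h_0$ on sets gives $h_0(\phi(Q_i))\simeq \diam(\phi(Q_i))^{t'}\,\ve_0(\phi(Q_i))$. By definition $\ve(Q_i)=\ve_0(\phi(Q_i))^{Kt/t'}$, so $\ve_0(\phi(Q_i))=\ve(Q_i)^{t'/(Kt)}$ and the task reduces to bounding
\[
\sum_i \diam(\phi(Q_i))^{t'}\,\ve(Q_i)^{t'/(Kt)}.
\]

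Next I would apply H\"older's inequality with the conjugate exponents $q=Kt/t'$ and $p=Kt/(Kt-t')$; note $q>1$ since $Kt/t'=1+(K-1)t/2$. Writing the summand as
\[
\diam(\phi(Q_i))^{t'}\,\ve(Q_i)^{t'/(Kt)} = \frac{\diam(\phi(Q_i))^{t'}}{\ell(Q_i)^{t'/K}}\cdot \bigl(\ell(Q_i)^{t}\,\ve(Q_i)\bigr)^{1/q},
\]
H\"older yields
\[
\sum_i \diam(\phi(Q_i))^{t'}\,\ve(Q_i)^{t'/(Kt)}\le \Bigl(\sum_i \frac{\diam(\phi(Q_i))^{t'p}}{\ell(Q_i)^{t'p/K}}\Bigr)^{1/p}\Bigl(\sum_i h(Q_i)\Bigr)^{t'/(Kt)}.
\]
Using the identity $\tfrac{1}{t'}-\tfrac{1}{Kt}=\tfrac{K-1}{2K}$ (equivalent to the defining relation between $t,t'$), a direct computation gives $t'p=\tfrac{2K}{K-1}$ and $t'p/K=\tfrac{2}{K-1}$. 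So the second factor has exactly the right form, and it remains to bound the first factor by a constant depending only on $K,t$.

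This last step is the main obstacle and is where the hypotheses about conformality on the quasidisks enter decisively. Since $\phi$ is conformal on each $K$-quasidisk $Q_i$, the Koebe distortion theorem applied after a conformal uniformization of $Q_i$ by $\D$ (with $K$-quasisymmetric boundary values) gives, with constants depending only on $K$,
\[
\diam(\phi(Q_i))\simeq |\partial\phi(z)|\,\ell(Q_i),\qquad z\in Q_i,
\]
and moreover $|\partial\phi|$ is essentially constant on $Q_i$. Consequently
\[
\frac{\diam(\phi(Q_i))^{2K/(K-1)}}{\ell(Q_i)^{2/(K-1)}}\simeq |\partial\phi(z_i)|^{2K/(K-1)}\,\ell(Q_i)^{2}\simeq \int_{Q_i}|\partial\phi|^{2K/(K-1)}\,dm,
\]
since $(2K-2)/(K-1)=2$. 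Summing over the disjoint $Q_i\subset \D$,
\[
\sum_i \frac{\diam(\phi(Q_i))^{t'p}}{\ell(Q_i)^{t'p/K}}\lesssim_K \int_{\Omega}|\partial\phi|^{2K/(K-1)}\,dm\le \int_{\D}|\partial\phi|^{2K/(K-1)}\,dm.
\]

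To finish I would invoke the improved integrability theorem of Astala--Nesi cited in the paper: for a principal $K$-quasiconformal map conformal on a disjoint union of $K$-quasidisks inside $\D$, the derivative $\partial\phi$ lies in $L^{2K/(K-1)}(\D)$ with a norm bounded solely in terms of $K$ (note this is precisely the critical exponent where Astala's bound is merely weak $L^{2K/(K-1)}$ in general, so the Astala--Nesi improvement is essential). Combining this with the H\"older step above gives
\[
\sum_i h_0(\phi(Q_i))\lesssim_{K,t}\Bigl(\sum_i h(Q_i)\Bigr)^{t'/(Kt)},
\]
which is the claim. The case $K=1$ is trivial as the principal normalization forces $\phi$ to be the identity. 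The single delicate point is the passage from the sum of $\diam(\phi(Q_i))^{t'p}/\ell(Q_i)^{t'p/K}$ to a derivative integral on $\Omega$; the quasidisk hypothesis is precisely what guarantees the Koebe estimate and the domain of validity of Astala--Nesi's gain of strong $L^{2K/(K-1)}$ integrability.
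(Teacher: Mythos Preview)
Your overall architecture is exactly the paper's: rewrite $h_0(\phi(Q_i))\simeq\diam(\phi(Q_i))^{t'}\ve(Q_i)^{t'/(Kt)}$, use H\"older with the exponents dictated by the relation between $t$ and $t'$, and finish with the Astala--Nesi borderline integrability of $J(\cdot,\phi)^{K/(K-1)}$ on the region of conformality. The paper applies H\"older to each $\diam(\phi(Q_i))$ first and then to the sum, while you H\"older the sum first and then handle the geometric factor; the exponents match, so this is only a cosmetic reordering.

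There is, however, a genuine gap in your justification of the key inequality
\[
\frac{\diam(\phi(Q_i))^{2K/(K-1)}}{\ell(Q_i)^{2/(K-1)}}\ \lesssim\ \int_{Q_i}|\partial\phi|^{2K/(K-1)}\,dm.
\]
Your argument asserts that, since $\phi$ is conformal on the $K$-quasidisk $Q_i$, Koebe's theorem (after uniformizing $Q_i$ by $\D$) yields $\diam(\phi(Q_i))\simeq|\partial\phi(z)|\,\ell(Q_i)$ for \emph{all} $z\in Q_i$, and that $|\partial\phi|$ is ``essentially constant'' on $Q_i$. That is not true: Koebe controls the derivative only on compact subsets of $\D$, and the Riemann map of a quasidisk can have derivative tending to $0$ or $\infty$ at the boundary (already for a square). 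So $|\partial\phi|$ need not be comparable to a constant on $Q_i$ with constants depending only on $K$.

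The fix is short and is what the paper does. By quasisymmetry, $\diam(\phi(Q_i))^2\simeq|\phi(Q_i)|=\int_{Q_i}J(\cdot,\phi)\,dm$; then H\"older gives
\[
\int_{Q_i}J\,dm\ \le\ \Bigl(\int_{Q_i}J^{K/(K-1)}\,dm\Bigr)^{(K-1)/K}|Q_i|^{1/K},
\]
and since $|Q_i|\simeq\ell(Q_i)^2$ and $J=|\partial\phi|^2$ on $Q_i$ (conformality), this is exactly the needed one-sided inequality. Also, note that Astala--Nesi gives $\int_{\Omega}J^{K/(K-1)}\le C(K)$ on the region of conformality $\Omega=\bigcup_iQ_i$; the further extension to $\int_{\D}$ that you wrote is unnecessary, and in fact the paper (and the cited result) only uses the integral over $\Omega$.
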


\begin{proof}
From the quasisymmetry of $\phi$, and the doubling properties of $\ve_0$ and $\ve$,
$$
h_0(\phi(Q_i))
=\inf_{\phi(Q_i)\subset B} h_0(B)
=\inf_{\phi(Q_i)\subset B(x,r)}r^{t'} \ve_0(x,r)
\simeq\diam(\phi(Q_i))^{t'}\,\ve(Q_i)^\frac{t'}{Kt}$$
with constants that depend only on $K$. By quasisymmetry again and H\"older's inequality, we get
$$\aligned
\diam(\phi(Q_i))
&\leq C_K\,\left(\int_{Q_i}J(z,\phi)\,dm(z)\right)^\frac12\\
&\leq C_K\left(\int_{Q_i}J(z,\phi)^\frac{K}{K-1}\,dm(z)\right)^\frac{K-1}{2K}\,\diam(Q_i)^\frac1K
\endaligned$$
since quasidisks have diameter comparable to the square root of its area. Therefore, by H\"older's inequality and the improved borderline integrability of quasiconformal mappings (\cite{astalanesi}),
$$\aligned
\sum_i h_0(\phi(Q_i)) 
&\leq C(K)\,\sum_i \diam(\phi(Q_i))^{t'}\,\ve(Q_i)^\frac{t'}{Kt}\\
&\leq C(K)\sum_i\left(\int_{Q_i}J(z,\phi)^\frac{K}{K-1}\,dm(z)\right)^{\frac{K-1}{2K}\,t'}\,\diam(Q_i)^\frac{t'}{K}\,\ve(Q_i)^\frac{t'}{Kt}\\
&\leq C(K)\left(\sum_i\int_{Q_i}J(z,\phi)^\frac{K}{K-1}\,dm(z)\right)^{\frac{K-1}{2K}\,t'}\,
\left(\sum_i\diam(Q_i)^t\,\ve(Q_i)\right)^\frac{t'}{Kt}\\
&\leq C(K)\, \left(\sum_i h(Q_i)\right)^\frac{t'}{Kt}\\\endaligned
$$
as claimed.
\end{proof}

\subsection{The main lemma on distortion of $h$-contents. }We are now ready to prove the main estimate on the distortion of $h$-contents by quasiconformal mappings with small distortion.

\begin{lemma}\label{mainlem}
Let $E\subset B(0,1/2)$ be compact and $\phi:\C\to\C$ a principal $K$-quasiconformal mapping, conformal on $\C\setminus \bar \D$. Let $\ve\in\cG_1$, and assume that $(\ve\circ\phi)^\frac{Kt}{t'}\in \cG_2(t)$. Set 
$$h(x,r)=r^{t'}\ve(x,r)$$
and
\begin{equation*}
\wt \ve(x,r)= \ve(\phi(B(x,r)))^\frac{Kt}{t'},\qquad \wt h(x,r)= r^{t}\wt\ve(x,r).
\end{equation*}
If $\delta$ is as in \eqref{Kpetit} and $K<1+\delta$, then
$$ \M^{h}(\phi(E)) \leq C(K) \, \M^{\wt h}(E)^\frac{t'}{Kt}.$$
\end{lemma}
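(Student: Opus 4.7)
The strategy is the classical Astala-type factorization, combined with the two specialized distortion estimates already available in this section: Lemma~\ref{lemkpetit} to handle the part of $\phi$ that is conformal off a covering of $E$, and Theorem~\ref{nesi} to handle the part that is conformal inside a disjoint union of quasidisks, which is what upgrades the dimension $t$ to $t'$.

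Given $\eta>0$, first I would pick a pairwise disjoint family $\cP=\{P_i\}$ of dyadic squares covering $E$ with $\sum_i \wt h(P_i)\le \M^{\wt h}(E)+\eta$. A standard stopping time argument applied to $\wt h$ allows one to arrange in addition that the triples $3P_i$ are pairwise disjoint and that the family satisfies the $\wt h$-packing condition \eqref{packingconstant} with an absolute constant $C_{pack}$. The hypothesis $\wt\ve\in\cG_2(t)$ then places the weight $\omega=\sum_i \wt h(P_i)|P_i|^{-1}\chi_{P_i}$ in the local $A_1$ class, and Proposition~\ref{propo9} gives a bound for the Beurling transform on $L^2(\omega)$ whose norm depends only on $C_{pack}$. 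This uniformity fixes $\delta$ via \eqref{Kpetit} and gives meaning to the assumption $K<1+\delta$.

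Next, via the measurable Riemann mapping theorem I would factor $\phi=\phi_2\circ\phi_1$, where $\phi_1$ is the principal $K$-quasiconformal mapping whose Beltrami coefficient is $\mu_\phi\,\chi_{\overline P}$ (with $\overline P=\bigcup_i P_i$), and $\phi_2=\phi\circ\phi_1^{-1}$, which is principal, $K$-quasiconformal, and conformal on $\phi_1(\overline P)$; each $\phi_1(P_i)$ is then a $K$-quasidisk. To $\phi_1$ I apply Lemma~\ref{lemkpetit} with the choice $\ve_0(B):=\ve(\phi_2(B))^{Kt/t'}$: this $\ve_0$ lies in $\cG_1$ by the quasisymmetry of $\phi_2$, the function $\ve_0(\phi_1(\cdot))$ is precisely $\wt\ve$, and the required $\cG_2(t)$ condition on it is the standing hypothesis. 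The lemma yields
$$
\sum_i \diam(\phi_1(P_i))^{\,t}\,\ve(\phi(P_i))^{Kt/t'}\;\leq\;C(K)\sum_i \wt h(P_i).
$$
To $\phi_2$, which is conformal on the disjoint quasidisks $\phi_1(P_i)$, I apply Theorem~\ref{nesi}, after a rescaling (by a factor depending only on $K$) that places $\phi_1(\overline P)$ inside a fixed disk and makes $\phi_2$ principal and conformal off that disk, and with the original $\ve$ playing the role of $\ve_0$ there:
$$
\sum_i h(\phi(P_i))\;\leq\; C(K,t)\,\biggl(\sum_i \diam(\phi_1(P_i))^{\,t}\,\ve(\phi(P_i))^{Kt/t'}\biggr)^{t'/(Kt)}.
$$
Chaining the two displays and using $\phi(E)\subset\bigcup_i\phi(P_i)$, together with the definition of $\M^h$ via minimal balls, I conclude
$$
\M^{h}(\phi(E))\;\leq\;\sum_i h(\phi(P_i))\;\leq\; C(K,t)\,\bigl(\M^{\wt h}(E)+\eta\bigr)^{t'/(Kt)},
$$
and letting $\eta\downarrow 0$ closes the argument.

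The principal obstacle is the first step: extracting from an arbitrary near-optimal covering a subfamily whose packing constant is absolute, independent of the covering and of the mapping. Without this uniformity the $\delta$ produced by \eqref{Kpetit} would also depend on the covering and the hypothesis $K<1+\delta$ would be vacuous. The remaining bookkeeping is comparatively routine: verifying that $\ve_0(B)=\ve(\phi_2(B))^{Kt/t'}$ has the regularity required by Lemma~\ref{lemkpetit}, and carrying out the affine normalization that puts $\phi_2$ into the form demanded by Theorem~\ref{nesi}.
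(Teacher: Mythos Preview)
Your proposal is correct and follows essentially the same route as the paper: select a near-optimal family of dyadic squares with disjoint triples and absolute packing constant (the paper cites \cite{Lacey-Sawyer-Uriarte} for this), factor $\phi=\phi_2\circ\phi_1$ with $\phi_1$ conformal off $\overline P$ and $\phi_2$ conformal on $\phi_1(\overline P)$, then apply Lemma~\ref{lemkpetit} to $\phi_1$ and Theorem~\ref{nesi} to $\phi_2$ with exactly the auxiliary gauge $\ve_0(B)=\ve(\phi_2(B))^{Kt/t'}$ you chose. One small slip: once the triples $3P_i$ are made disjoint the squares $P_i$ themselves need not cover $E$; the paper takes $E\subset\bigcup_i 12P_i$ and absorbs the dilation via quasisymmetry, $h(\phi(12P_i))\simeq h(\phi(P_i))$.
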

\begin{proof}
Let us fix $\eta>0$. As in \cite{Lacey-Sawyer-Uriarte} we find a finite family ${\cal P}=\{P_1,\dots,P_N\}$ of dyadic cubes, with disjoint triples, such that $E\subset\cup_i 12P_i$ and
$$\sum_i \wt h(P_i)\leq C( \M^{\wt h}(E)+\eta).$$
Further, we may also assume that the packing condition \eqref{packingconstant} is satisfied, for instance, with constant $C_{pack}=1$, so that $\delta>0$ in \eqref{Kpetit} is fixed. We now decompose $\phi=\phi_2\circ\phi_1$, where both $\phi_1$, $\phi_2$ are principal $K$-quasiconformal
mappings. Moreover, we require $\phi_1$ to be conformal in $\C\setminus\cup_iP_i$, and $\phi_2$ to be conformal on $\cup_i\phi_1(P_i)$ . \\
\\
Now, we note that $\phi(E)$ can be covered by the quasidisks $\phi(12P_i)$. We can then estimate the $h$-content of $\phi(E)$ with the help of the quasisymmetry,
$$\aligned
 \M^{h}(\phi(E))&\leq \sum_i h(\phi(12P_i))\leq C(K)\,\sum_i h(\phi(P_i))\\
&= C(K)\,\sum_ih(\phi_2\circ\phi_1(P_i)).
\endaligned$$
Since $\phi_1$ is a global $K$-quasiconformal mapping, each $\phi_1(P_i)$ is a $K$-quasidisk. Hence, by Lemma \ref{nesi}, if we define the new gauge function $ h_0(x,r)=r^{t}\,\ve_0(x,r)$, with
$\ve_0(B)=\ve(\phi_2(B))^\frac{Kt}{t'}$, then we have
$$
\sum_ih(\phi_2\circ\phi_1(P_i))\leq C(K)\,\left(\sum_i  h_0(\phi_1(P_i))\right)^\frac{t'}{Kt}.
$$
To estimate the sum on the right hand side above, we use Lemma \ref{lemkpetit}. Indeed, the composition $\ve_0\circ\phi_1=(\ve\circ\phi)^\frac{Kt}{t'}$ certainly belongs to $\cG_1$, and by assumption it also belongs to $\cG_2(t)$. Hence Lemma \ref{lemkpetit} gives us another gauge $h_1(x,r)=r^t\,\ve_1(x,r)$, with
$\ve_1(D)=\ve_0(\phi_1(D))$,
 such that  
$$
\sum_i h_0(\phi_1(P_i))\leq C(K)\,\sum_i h_1(P_i).
$$
But then
$$
\ve_1(D)=\ve_0(\phi_1(D))\simeq\ve(\phi_2\circ\phi_1(D))^\frac{Kt}{t'}=\ve(\phi(D))^\frac{Kt}{t'}=\wt\ve(D),
$$
so that $h_1(P_i)\simeq \wt h(P_i)$. Summarizing,
$$\aligned
 \M^{h}(\phi(E)) &\leq C(K)\,\left(\sum_i  h_0(\phi_1(P_i))\right)^\frac{t'}{Kt}
\leq C(K)\,\left(\sum_i h_1(P_i)\right)^\frac{t'}{Kt}
\\
&\leq C(K)\,\left(\sum_i \wt h(P_i)\right)^\frac{t'}{Kt}\leq C(K)\,\left( \M^{\wt h}(E)+\eta\right)^\frac{t'}{Kt}.
\endaligned$$
Now letting $\eta\to 0$, the claim follows.
\end{proof}

\noindent
Our next goal is to remove the smallness assumption $K<1+\delta$ in the previous Lemma. 
This is done by means of a standard factorization argument.

\begin{lemma}\label{lemkgran}
Let $0<t<2$. Let $\ve\in\cG_1$ and set $h(x,r)=r^{t'}\ve(x,r)$. Suppose that for any principal $K$-quasiconformal mapping $\psi:\C\to\C$ the function $(\ve\circ\psi)^d$ belongs to $\cG_2$ for any $\frac{t'}{Kt}\leq d\leq 1$. Let $E\subset B(0,1/2)$ be compact and $\phi:\C\to\C$ a principal $K$-quasiconformal mapping, conformal on $\C\setminus \bar \D$. Set 
\begin{equation*}
\wt\ve(x,r)= \ve(\phi(B(x,r)))^{\frac{Kt}{t'}},\qquad \wt h(x,r)= r^{t}\wt\ve(x,r).
\end{equation*}
Then we have
$$ \M^{h}(\phi(E)) \leq C(K) \, \M^{\wt h}(E)^\frac{t'}{Kt}.$$
\end{lemma}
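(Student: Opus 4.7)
The plan is to remove the smallness restriction $K<1+\delta$ from Lemma \ref{mainlem} by a standard Sto\"ilow-type factorization. I first choose $n\in\bN$ large enough so that $K_1:=K^{1/n}<1+\delta$, and factor $\phi=F_n\circ F_{n-1}\circ\cdots\circ F_1$, with each $F_j$ a principal $K_1$-quasiconformal mapping, conformal on $\C\setminus\overline\D$; such a decomposition is obtained in the standard way by solving Beltrami equations with coefficients of small $L^\infty$-norm.

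Next I set up the matching intermediate data. Let $\Phi_k:=F_k\circ\cdots\circ F_1$ and define dimensions $s_0,\ldots,s_n$ by $s_0=t'$ and $\frac1{s_k}-\frac12=K_1\bigl(\frac1{s_{k-1}}-\frac12\bigr)$, which forces $s_n=t$. In parallel, put $\eta_0=\ve$ and recursively
$$
\eta_k(x,r)=\eta_{k-1}\bigl(F_{n-k+1}(B(x,r))\bigr)^{K_1 s_k/s_{k-1}},\qquad h^{(k)}(x,r)=r^{s_k}\eta_k(x,r).
$$
A direct unwinding gives $\eta_k(x,r)=\ve\bigl(F_n\circ\cdots\circ F_{n-k+1}(B(x,r))\bigr)^{K_1^k s_k/t'}$, and in particular $\eta_n=\wt\ve$, $h^{(n)}=\wt h$.

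The heart of the argument is to apply Lemma \ref{mainlem} $n$ times, peeling off one factor at a time. At step $k$, I apply the lemma to $F_{n-k+1}$ with the triple $(K,t',t)$ in the statement of Lemma \ref{mainlem} replaced by $(K_1,s_{k-1},s_k)$ and with starting gauge $\eta_{k-1}$. The $\cG_1$ condition propagates along the iteration by Lemma \ref{lemtec5}, while the required $\cG_2(s_k)$ condition on $(\eta_{k-1}\circ F_{n-k+1})^{K_1 s_k/s_{k-1}}$ is delivered by the standing hypothesis of Lemma \ref{lemkgran}: this expression equals $(\ve\circ\psi)^{K_1^k s_k/t'}$ with $\psi:=F_n\circ\cdots\circ F_{n-k+1}$ a principal $K_1^k$-quasiconformal map, and the exponent lies in the admissible range for which $(\ve\circ\psi)^d\in\cG_2$ by assumption. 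Telescoping the $n$ resulting inequalities, the exponents multiply to $\prod_{k=1}^n s_{k-1}/(K_1 s_k)=s_0/(K_1^n s_n)=t'/(Kt)$, and because $n$ depends only on $K$ the constants $C(K_1)$ collapse to a single $C(K)$, yielding the stated estimate.

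The main technical obstacle I anticipate is the normalization condition in Lemma \ref{mainlem}: it requires the source set to lie in $B(0,1/2)$ and the map to be principal and conformal off $\overline\D$, whereas the intermediate sets $\Phi_{n-k}(E)$ generally escape $B(0,1/2)$. I handle this by conjugating each $F_{n-k+1}$ with an appropriate dilation $D_{\lambda_k}(z)=\lambda_k z$ so that the relevant intermediate set is rescaled back into $B(0,1/2)$; since $\M^{h}$ scales homogeneously and the classes $\cG_1,\cG_2$ are invariant under dilations up to multiplicative constants, the rescaling factors contribute only bounded multiplicative constants depending on $K$.
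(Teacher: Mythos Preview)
Your approach is the paper's: factor $\phi$ into $n$ principal $K^{1/n}$-quasiconformal pieces with $K^{1/n}<1+\delta$, set up a telescoping chain of intermediate dimensions $s_k$ and gauges $\eta_k$, and iterate Lemma~\ref{mainlem}. Your handling of the renormalization via dilations is in fact more explicit than the paper's, which simply writes ``By recursively using Lemma~\ref{mainlem}'' and leaves those rescalings to the reader.

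One point deserves correction. Your claim that the exponent $d_k=K_1^{\,k} s_k/t'$ lies in the stated admissible range $\bigl[\tfrac{t'}{Kt},\,1\bigr]$ is false: from your own recursion $d_0=1$ and
\[
\frac{d_k}{d_{k-1}}=\frac{K_1 s_k}{s_{k-1}}=\frac{2K_1}{2K_1-(K_1-1)s_{k-1}}>1,
\]
so the $d_k$ increase from $1$ up to $Kt/t'>1$. The exponents actually required lie in $[1,\,Kt/t']$, and moreover one needs membership in $\cG_2(s_k)$ for the intermediate dimensions $s_k\in[t,t']$, not merely $\cG_2(t)$. This is not a defect of your strategy; the range in the lemma's hypothesis appears to be misstated (the paper's own proof never checks the $\cG_2$ condition at the intermediate steps either). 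In every application of the lemma one takes $\ve=\ve_{\mu,a,t'}$, for which Lemma~\ref{lemtec5} delivers $(\ve\circ\psi)^d\in\cG_2(s)$ for \emph{all} $d>0$ and all relevant $s$ once $a$ is small enough, so the discrepancy is harmless downstream. A small aside: the $\cG_1$ propagation you invoke does not come from Lemma~\ref{lemtec5} (which is specific to the gauges $\ve_{\mu,a,t}$) but follows directly from quasisymmetry and the definition of $\cG_1$, as the paper remarks just before that lemma.
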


\begin{proof}
We factorize $\phi$ so that $\phi = \phi_n\circ\cdots \phi_1$, where each $\phi_i$ is a $K^{1/n}$-quasiconformal mapping, conformal on $\C\setminus \phi_{i-1}\circ\dots\circ\phi_1(\D)$. We can further do this so that
$$K^{1/n}<1+\delta$$
for $\delta$ as in \eqref{Kpetit}. Of course, such $n$ will depend on $K$ and also on the packing constant $C_{pack}$ in \eqref{packingconstant}. So we have
$$E = E_0 \stackrel{\phi_1}{\longrightarrow} E_1
\stackrel{\phi_2}{\longrightarrow} \ldots \stackrel{\phi_{n-1}}{\longrightarrow} E_{n-1} \stackrel{\phi_n}{\longrightarrow} E_n
=\phi(E).$$
We now denote $t_0=t$, and for $0\leq j\leq n-1$, we take $t_{j}$ so that
$$\frac{1}{t_{j+1}}-\frac12=\frac{1}{K^\frac1n}\left(\frac{1}{t_j}-\frac12\right).$$
In particular, $t_n=t'$. For any ball $B$, we also set $\ve_n(B)=\ve(B)$, $h_n(B)=h(B)$, and for $j=n-1,n-2,\dots,1,0$ let 
$$\ve_j(B) = \ve_{j+1}(\phi_{j+1}(B))^{\frac{K^\frac1n t_j}{t_{j+1}}} 
= \ve_j(\phi_j(B))^{\frac{t_j}{K^{1/n}\,t_{j-1} }}$$
and $$h_j(x,r)= r^{t_j}\,\ve_j(x,r).$$
Note that therefore $$\ve_0(B)=\ve_1(\phi_1(B))^\frac{K^{1/n}t_0}{t_1}=\ve_2(\phi_2\circ\phi_1(B))^\frac{K^{2/n}t_0}{t_2}=\ve_n(\phi(B))^\frac{Kt}{t'}=\wt\ve(B).$$ 
By recursively using Lemma \ref{mainlem}, we have
\begin{align*}
 \M^{h}(\phi(E)) & =  \M^{h_n}(E_n)\leq C(K)\, \M^{h_{n-1}}(E_{n-1})^{\frac{t_n}{K^{1/n} t_{n-1}}} \\
&\leq C(K)\,C(K)^\frac{t_n}{K^{1/n}t_{n-1}}\, \M^{h_{n-2}}(E_{n-2})^{\frac{t_n}{K^{2/n} t_{n-2}}}\\
&\leq  C(K)^{1+\frac{t_n}{K^{1/n}t_{n-1}}+\dots+\frac{t_n}{K^{(n-1)/n}t_{1}}}\, \M^{h_0}(E_0)^{\frac{t_n}{K\,t_0 }}\\
&=C(K)^{1+(n-1)\left(1-\frac{t'}{2}\right)+\frac{t'}{2}\frac{K-K^{1/n}}{K^{1/n}-1}}\, \M^{h_0}(E)^{\frac{t'}{K\,t }}.
\end{align*}
Now, since $C_{pack}$ is fixed, we see that the constant above depends only on $K$. Therefore we can rewrite this in the following way
$$
 \M^{h}(\phi(E))\leq C(K)\, \M^{\wt h}(E)^\frac{t'}{Kt}
$$
as claimed.
\end{proof}

\bigskip


\section{Quasiconformal distortion of Riesz capacities.}

The following lemma describes the relationship between Riesz capacities and $h$-contents.

\begin{lemma}\label{capMh}
Let $1<p<\infty$ and $0<\alpha p<2$, and let $E\subset\C$ be compact. Then
\begin{equation}\label{supr}
\dot{{\cal C}}_{\alpha,p}(E)\simeq \sup\left\{ \M^h(E)\right\}
\end{equation}
where the supremum on the right hand side runs over all gauge functions 
$$h(x,r)=r^{2-\alpha p}\,\ve(x,r)$$ 
with $\ve\in{\cal G}_1$ and such that
\begin{equation}\label{normaliz}
\int_0^\infty\frac{\ve(x,r)^{p'-1}}{r}\,dr\leq 1,\qquad x\in\C.
\end{equation}
\end{lemma}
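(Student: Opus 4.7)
\textbf{Proof plan for Lemma \ref{capMh}.} I would prove the two inequalities separately, using Wolff's characterization
$$\dot{{\cal C}}_{\alpha,p}(E)\simeq\sup\{\mu(E): \supp(\mu)\subset E,\,\dot W^\mu_{\alpha,p}\leq 1\}$$
stated in Subsection~\ref{strategy}, where $\dot W^\mu_{\alpha,p}(z)=\int_0^\infty \bigl(\mu(B(z,r))/r^{2-\alpha p}\bigr)^{p'-1}\,dr/r$.

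\emph{Step 1: $\sup \M^h(E) \lesssim \dot{{\cal C}}_{\alpha,p}(E)$.} Fix an admissible gauge $h(x,r)=r^{2-\alpha p}\ve(x,r)$ with $\ve\in\cG_1$ satisfying \rf{normaliz}. Since $\ve\in\cG_1$, Frostman's lemma (Lemma~\ref{frostman}) yields a measure $\nu$ supported on $E$ with $\nu(B)\leq h(B)$ for every ball $B$ and $\nu(E)\gtrsim\M^h(E)$. The key point is that pointwise, for every $z\in\C$,
$$\dot W^\nu_{\alpha,p}(z)=\int_0^\infty \left(\frac{\nu(B(z,r))}{r^{2-\alpha p}}\right)^{p'-1}\frac{dr}{r}\leq \int_0^\infty \ve(z,r)^{p'-1}\frac{dr}{r}\leq 1,$$
by \rf{normaliz}. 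Then Wolff's theorem gives $\nu(E)\lesssim \dot{{\cal C}}_{\alpha,p}(E)$, whence $\M^h(E)\lesssim \dot{{\cal C}}_{\alpha,p}(E)$. Taking the supremum over admissible $h$ concludes this direction.

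\emph{Step 2: $\dot{{\cal C}}_{\alpha,p}(E) \lesssim \sup \M^h(E)$.} Given a measure $\mu$ supported on $E$ with $\dot W^\mu_{\alpha,p}(z)\leq 1$ everywhere, set $t=2-\alpha p$, pick $0<a<2-t$, and consider the gauge $h_{\mu,a,t}(x,r)=r^t\ve_{\mu,a,t}(x,r)$ from \rf{defhx}. By the discussion in Subsection~\ref{Gp}, $\ve_{\mu,a,t}\in\cG_1$ automatically. The heart of the proof is to show that
$$\int_0^\infty \ve_{\mu,a,t}(x,r)^{p'-1}\,\frac{dr}{r}\;\lesssim\;\dot W^\mu_{\alpha,p}(x)\;\leq\;1$$
uniformly in $x$. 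To obtain this, split the defining integral of $\ve_{\mu,a,t}$ in the annuli $\{2^k r<|y-x|\leq 2^{k+1}r\}$ and bound $\psi_{a,t}(|y-x|/r)$ by $(r/|y-x|)^{t+a}$ away from $B(x,r)$; this gives the pointwise estimate
$$\ve_{\mu,a,t}(x,r)\lesssim \sum_{k\geq 0} 2^{-ak}\,\theta^t_\mu\bigl(B(x,2^{k+1}r)\bigr),\qquad \theta^t_\mu(B(x,s)):=\mu(B(x,s))/s^t.$$
Since $\sum_k 2^{-ak}<\infty$, Hölder's inequality for the weights $\{2^{-ak}\}$ yields
$\ve_{\mu,a,t}(x,r)^{p'-1}\lesssim \sum_k 2^{-ak}\,\theta^t_\mu(B(x,2^{k+1}r))^{p'-1}$, and integrating $dr/r$ and changing variables $s=2^{k+1}r$ in each term reduces the sum to $\bigl(\sum_k 2^{-ak}\bigr)\dot W^\mu_{\alpha,p}(x)$.

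Now rescale: let $\ve(x,r)=C^{-1}\ve_{\mu,a,t}(x,r)$ with $C$ chosen so that \rf{normaliz} holds. Then $\ve\in\cG_1$ is admissible and $h(x,r)=r^{2-\alpha p}\ve(x,r)=C^{-1}h_{\mu,a,t}(x,r)$. Applying Lemma~\ref{Mmu},
$$\M^h(E)=C^{-1}\M^{h_{\mu,a,t}}(E)\geq \tfrac{1}{2C}\,\mu(E).$$
Taking the supremum over $\mu$ with $\dot W^\mu_{\alpha,p}\leq 1$ and invoking Wolff's theorem again, we conclude $\sup \M^h(E)\gtrsim \dot{{\cal C}}_{\alpha,p}(E)$.

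\emph{Main obstacle.} The only nontrivial estimate is the one controlling $\int \ve_{\mu,a,t}^{p'-1}\,dr/r$ by the Wolff potential. The Hölder-with-weights trick works uniformly for all $p\in(1,\infty)$ (both $p'-1\geq 1$ and $p'-1\leq 1$ are handled, since either Jensen or subadditivity of $s\mapsto s^{p'-1}$ give the inequality $(\sum 2^{-ak}b_k)^{p'-1}\lesssim \sum 2^{-ak}b_k^{p'-1}$). Everything else is routine manipulation of Wolff's theorem and Frostman's lemma.
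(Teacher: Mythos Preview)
Your proof is correct and follows essentially the same route as the paper: Frostman plus Wolff for one direction, and the smoothed gauge $h_{\mu,a,t}$ together with the annular decomposition bounding $\int \ve_{\mu,a,t}^{\,p'-1}\,dr/r$ by the Wolff potential for the other. One minor imprecision: for $p'-1<1$ subadditivity only gives $\bigl(\sum_k 2^{-ak}b_k\bigr)^{p'-1}\le\sum_k 2^{-ak(p'-1)}b_k^{p'-1}$ (not with the same weight $2^{-ak}$), but since $a(p'-1)>0$ the series still sums and the conclusion stands---the paper handles this by writing $2^{(t+a)(j-k)}=2^{(t+a/2)(j-k)}\cdot 2^{(a/2)(j-k)}$ before applying H\"older, which is just a cosmetic variant of your argument.
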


\begin{proof}
We will use the characterization of $\dot{\cal C}_{\alpha,p}$ in terms of Wolff potentials (see Subsection \ref{strategy}). Let us consider a measure $\mu$ supported on $E$ and such that $\dot{W}_{\alpha,p}^\mu(x)\leq 1$ for all $x\in\C$, and let $t=2-\alpha p$, so that $0<t<2$. For small enough $a>0$, construct $h_{\mu,a,t}$ as we did in \eqref{defhx}. Recall that $h_{\mu,a,t}$ belongs to the class ${\cal G}_1$. By Lemma \ref{Mmu}, $ \M^{h_{\mu,a,t}}(E)\geq C\,\mu(E)$. Decomposing the integrals into annuli, for all $x\in\C$ we get 
\begin{align*}
\int_0^\infty \ve_{\mu,a,t}(x,r)^{p'-1}\frac{dr}r &= \int_0^\infty\frac1{r^{t(p'-1)}} \biggl(\int \psi_{a,t}\Bigl(\frac{y-x}r\Bigr)d\mu(y)\biggr)^{p'-1}\frac{dr}r\\
& \leq C \sum_{j\in \Z} 2^{-t(p'-1)j} \Bigl(\sum_{k>j}\mu(B(x,2^k)) 2^{(t+a)(j-k)}\Bigr)^{p'-1}\\
& \leq C \sum_{j\in \Z} 2^{-t(p'-1)j} \sum_{k>j}\mu(B(x,2^k))^{p'-1} 2^{(p'-1)(t+\frac a2)(j-k)},
\end{align*}
where we applied H\"older's inequality for $p'-1>1$, and the fact that $(c+d)^{p'-1}\leq c^{p'-1}+ d^{p'-1}$ otherwise. Thus,
\begin{align}\label{ed**}
\int _0^\infty\ve_{\mu,a,t}(x,r)^{p'-1}\frac{dr}r &
 \lesssim
\sum_{k\in \Z}  \mu(B(x,2^k))^{p'-1}\, 
 2^{-(p'-1)(t+\frac a2)k}
 \sum_{j<k}  2^{t(p'-1)\frac a2 j} \nonumber \\
 & \simeq \sum_{k\in \Z}  \mu(B(x,2^k))^{p'-1}\, 
 2^{-(p'-1)tk}
 \lesssim  \dot W^\mu_{\alpha,p}(x)\lesssim 1.
\end{align}
Above we allow constants in the estimates to depend on $\alpha,p,t,a$, but not on $\mu$. Therefore, if $\mu$ is admissible for $\dot{\cal C}_{\alpha,p}(E)$ then $h_{\mu,a,t}$ is admissible for the supremum in \eqref{supr} and
$$\sup_h\left\{ \M^h(E)\right\}\geq C\, \M^{h_{\mu,a,t}}(E)\geq C\,\mu(E).$$
Conversely, let us fix any gauge function $h(x,r)=r^{2-\alpha p}\,\ve(x,r)$ in ${\cal G}_1$ satisfying \eqref{normaliz} and such that $ \M^h(E)>0$. By Lemma \ref{frostman}, there exists a measure $\mu$ supported on $E$ such that
$$\mu(B)\leq C\,h(B),$$
and furthermore, we can choose $\mu$ so that $\mu(E)\geq C\, \M^h(E)$. But then
$$
\dot W_{\alpha,p}^\mu(x)=\int_0^\infty\left(\frac{\mu(B(x,r))}{r^{2-\alpha p}}\right)^{p'-1}\frac{dr}{r}\leq\int_0^\infty\frac{\ve(x,r)^{p'-1}}{r}\,dr\leq 1.
$$
Therefore $\mu$ is admissible for $\dot{{\cal C}}_{\alpha,p}(E)$, and $\dot{{\cal C}}_{\alpha,p}(E)\geq C\,\mu(E)\geq C\, \M^h(E).$
\end{proof}

\noindent
Let us remark that we can further restrict the class of admissible functions $h$ in the above supremum. In fact, it follows from the proof above that
\begin{equation}\label{caphmu}
\dot{{\cal C}}_{\alpha,p}(E)\simeq \sup\left\{ \M^{h_{\mu,a,t}}(E): \supp(\mu)\subset E, 
\int_0^\infty\left(\frac{h_{\mu,a,t}(x,r)}{r^t}\right)^{p'-1}\,\frac{dr}{r}\leq 1\right\},
\end{equation}
where $t=2-\alpha p$, as above. On the other hand, we emphasize the fact that Lemma \ref{capMh} does not hold if we restrict the supremum to gauge functions $h$ invariant under translations (see \cite[Remark 5.6.4]{adamshedberg}).
Finally, if condition \eqref{normaliz} is replaced by
$$\lim_{r\to 0}\frac{h(x,r)}{r^{2-\alpha p}}= \lim_{r\to 0}\ve(x,r)=0\hspace{1cm}\text{ uniformly in }x$$
then we obtain the lower $t$-dimensional Hausdorff content, which vanishes exactly on sets having $\sigma$-finite $t$-dimensional Hausdorff measure $ \H^t$ (see \cite{sionsjerve} for more details). \\
\\
\noindent
Before proving Theorem \ref{teocap2}, we need the following auxiliary result.

\begin{lemma} \label{lemcg}
Let  $\phi:\C\rightarrow\C$ be a $K$-quasiconformal mapping, and 
$\ve_0\in\cG_1$. 
Define $\ve_1(B) = \ve_0(\phi(B))$ for any ball $B\subset\C$. 
For any $s>0$ we have
$$\int_0^\infty \ve_1(x,r)^s\,\frac{dr}r\leq C(K,s)\int_0^\infty \ve_0(\phi(x),r)^s\,\frac{dr}r.
$$
\end{lemma}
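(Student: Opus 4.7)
\textbf{Proof plan for Lemma \ref{lemcg}.}
The first step is to replace the ``set-valued'' evaluation $\ve_0(\phi(B(x,r)))$ by a standard radial evaluation at $\phi(x)$. Since $\phi$ is $K$-quasiconformal, it is quasisymmetric: there is an $H=H(K)\geq 1$ such that
$$B(\phi(x),R^-(x,r))\subset \phi(B(x,r))\subset B(\phi(x),R^+(x,r)),\qquad R^+(x,r)\leq H\,R^-(x,r).$$
Consequently, setting $\rho(r):=\diam(\phi(B(x,r)))$, the minimal ball $B^*$ containing $\phi(B(x,r))$ has radius $R^*\simeq\rho(r)$ and center within distance $R^*$ of $\phi(x)$. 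Applying the $\cG_1$ condition on $\ve_0$ (both for nearby centers and for comparable radii) gives
$$\ve_1(x,r)=\ve_0(B^*)\simeq \ve_0(\phi(x),\rho(r)),$$
with constants depending only on $K$ and the $\cG_1$ constant of $\ve_0$.

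Next, I reduce both sides of the inequality to dyadic sums in the target radius. For the right-hand side, a straightforward dyadic splitting together with $\ve_0\in\cG_1$ yields
$$\int_0^\infty \ve_0(\phi(x),r)^s\,\frac{dr}{r}\simeq \sum_{k\in\Z}\ve_0(\phi(x),2^k)^s.$$
For the left-hand side, after the reduction above I decompose according to the level sets $J_k:=\{r>0:\rho(r)\in[2^k,2^{k+1})\}$. Since $\rho$ is continuous and non-decreasing (because $\phi$ is a homeomorphism of $\C$), each $J_k$ is an interval $[a_k,b_k)$, and on it $\ve_0(\phi(x),\rho(r))\simeq \ve_0(\phi(x),2^k)$ by $\cG_1$. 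Therefore
$$\int_0^\infty\ve_1(x,r)^s\,\frac{dr}{r}\lesssim \sum_{k\in\Z}\ve_0(\phi(x),2^k)^s\,\log(b_k/a_k).$$

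The main point is now to bound $\log(b_k/a_k)$ uniformly in $k$, and this is precisely where the two-sided quasisymmetry of $K$-quasiconformal maps is used. Since both $\phi$ and $\phi^{-1}$ are $K$-quasisymmetric, the function $\rho:(0,\infty)\to(0,\infty)$ satisfies bilateral doubling: there exist $C(K)\geq 1$ and an integer $N=N(K)$ such that
$$\rho(2r)\leq C(K)\,\rho(r)\quad\text{and}\quad \rho(2^Nr)\geq 2\,\rho(r),\qquad r>0.$$
The upper doubling comes from quasisymmetry of $\phi$ applied to the annulus $B(x,2r)\setminus B(x,r)$, and the reverse doubling comes from applying quasisymmetry to $\phi^{-1}$ (equivalently, $\rho^{-1}$ is quasisymmetric). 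The reverse doubling implies $b_k\leq 2^N a_k$, hence $\log(b_k/a_k)\leq N\log 2\leq C(K)$.

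Combining everything,
$$\int_0^\infty\ve_1(x,r)^s\,\frac{dr}{r}\lesssim C(K)\sum_{k\in\Z}\ve_0(\phi(x),2^k)^s\simeq \int_0^\infty\ve_0(\phi(x),r)^s\,\frac{dr}{r},$$
with the constant depending only on $K$, $s$, and the $\cG_1$ constant of $\ve_0$. The only real subtlety, which I would want to double-check, is the bilateral doubling of $\rho$; everything else is a routine dyadic decomposition and application of $\cG_1$.
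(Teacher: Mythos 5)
Your proof is correct and follows essentially the same route as the paper: both reduce $\ve_1(x,r)$ to $\ve_0(\phi(x),\rho(r))$ via quasisymmetry and $\cG_1$, decompose according to the dyadic target scale $\rho(r)\in[2^k,2^{k+1})$, and use the $K$-quasi-invariance of moduli of annuli to show each target scale is hit by a bounded range of source scales. The only cosmetic difference is that the paper first discretizes $r$ into dyadic values and counts $\#\{j:2^k\leq r_j<2^{k+1}\}\leq C(K)$, whereas you keep the integral and bound $\log(b_k/a_k)$ by the equivalent reverse-doubling property of $\rho$.
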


\begin{proof}
We have
$$
\int_0^\infty 
\ve_0(\phi(B(x,r)))^s\, \frac{dr}r 
\leq C(s)\sum_{j\in\Z} \ve_0(\phi(B(x, 2^j)))^s.$$
Denote now $r_j=\diam(\phi(B(x, 2^j))$. We obtain
\begin{align*}
\sum_{j\in\Z} \ve_0(\phi(B(x, 2^j))^s & = \sum_{k\in\Z} \sum_{j:2^k\leq r_j< 2^{k+1}}\!\!\!\!\ve_0(\phi(B(x, 2^j))^s \\ &\lesssim \sum_{k\in\Z} \sum_{j:2^k\leq r_j< 2^{k+1}}\!\!\!\!\ve_0(B(\phi(x), r_j))^s \\
& \lesssim C(K) \sum_{k\in\Z} \ve_0(B(\phi(x), 2^k))^s\leq C(K,s)\int_0^\infty 
\ve_0(\phi(x),r)^s\, \frac{dr}r,
\end{align*}
where we took into account that $\#\{j:2^k\leq r_j< 2^{k+1}\}\leq C(K)$. This follows easily from the fact that  the moduli of the annuli $B(x,2^{j+1})\setminus B(x,2^j)$ is
$K$-quasi-invariant by $\phi$.
\end{proof}

\medskip
\begin{proof}[\bf Proof of Theorem \ref{teocap2}] By standard methods, we may assume that $\phi$ is a principal quasiconformal mapping, conformal on $\C\setminus \bar \D$, and that $E\subset B(0,1/2)=\frac12 B$ (and so $\diam(\phi(B))\simeq 1$). We can further assume that $\dot{\cal C}_{\beta,q}(\phi(E))>0$, since otherwise the statement is obvious. Let $a>0$ be small enough, and $t'=2-\beta q$. By \eqref{caphmu}, we can find a finite Borel measure $\mu$ supported on $\phi(E)$ such that  
$$\dot{\cal C}_{\beta,q}(\phi(E))\simeq\M^{h_{\mu,a,t'}}(\phi(E)),$$
and
$$\int_0^\infty\left(\frac{h_{\mu,a,t'}(x,r)}{r^{t'}}\right)^{q'-1}\frac{dr}{r}\leq 1$$ 
for all $x\in\C$. Writing $h_{\mu,a,t'}(x,r)=r^{t'}\,\ve_{\mu,a,t'}(x,r)$, we proved in Lemma \ref{lemkgran} that if
$$\wt h(x,r)=r^t\,\wt\ve(x,r)\hspace{1cm}\text{and}\hspace{1cm}\wt\ve(B)=\ve_{\mu,a,t'}(\phi(B))^\frac{Kt}{t'}$$
then we have the inequality
\begin{equation}\label{ineq}
\M^{h_{\mu,a,t'}}(\phi(E)) \leq C(K)  \M^{\wt h}(E)^\frac{t'}{Kt},
\end{equation}
with a constant $C(K)>0$ depending only on $K$. Furthermore, using our choice
$$
\frac{1}{p'-1}=p-1=\frac{Kt}{t'}\,(q-1)=\frac{Kt}{t'}\,\frac{1}{q'-1},
$$
together with Lemma \ref{lemcg}, we get that
\begin{align*}
\int_0^\infty\left(\frac{\wt{h}(x,r)}{r^{2-\alpha p}}\right)^{p'-1}\,\frac{dr}{r}
&=\int_0^\infty \wt\ve(x,r)^{p'-1}\,\frac{dr}r=\int_0^\infty\ve_{\mu,a,t'}(\phi(B(x,r)))^{\frac{Kt(p'-1)}{t'}} \,\frac{dr}r \\
& \leq C\int_0^\infty\ve_{\mu,a,t'}(\phi(x),r)^{\frac{Kt(p'-1)}{t'}} \,\frac{dr}r\\
&= C\, \int_0^\infty\ve_{\mu,a,t'}(\phi(x),r)^{q'-1} \,\frac{dr}r\leq C
\end{align*}
for all $x\in\C$. Therefore, by Lemma \ref{capMh}, $\wt h$ is admissible for $\dot{\cal C}_{\alpha,p}(E)$, and taking supremum in \eqref{ineq} we get that
$$\dot{\cal C}_{\beta,q}(\phi(E))\leq C\,\dot{\cal C}_{\alpha,p}(E)^\frac{t'}{Kt},$$
as desired.
\end{proof}

\bigskip

\section{Proof of Corollary \ref{teosigma} using Lemma \ref{lemkgran}}

Although Corollary \ref{teosigma} is an immediate consequence of Theorem \ref{teopri},
which will be proved in next section, we would like to show that it also follows rather easily from 
Lemma \ref{lemkgran}.
\\
\\
Suppose that $\H^{t'}(\phi(E))$ is non $\sigma$-finite. Then it supports some non zero measure $\mu$ such that
$\theta_\mu^{t'}(x)=0$ at $\mu$-a.e. $x\in\phi(E)$. We can assume $\theta_\mu^{t'}(x)=0$ for all $x\in\C$,
replacing $\mu$ by its restriction to some nice subset if necessary . It is easy to check that
this implies that $\ve_{\mu,a,t'}(x,r)\to 0$ as $r\to 0$, for all $x\in\C$ (one only has to write $\ve_{\mu,a,t'}(x,r)$
as a convex combination of $\mu(B(x,s))/s^{t'}$, $s\in(0,\infty)$). 
As in Theorem \ref{teocap2}, using Lemma \ref{lemkgran} and Frostman Lemma, we deduce that
there exists another non zero measure $\nu$ supported $E$ such that $\nu(x,r) \leq  r^{t}\wt \ve(x,r)$ 
for all $x\in\C$ and $r>0$, with 
$$\wt\ve(x,r)= \ve_{\mu,a,t'}(\phi(B(x,r)))^{\frac{Kt}{t'}}.$$
Since $\phi$ is continuous, we have
$$\lim_{r\to0}\wt\ve(x,r)= \lim_{r\to0}\ve_{\mu,a,t'}(\phi(B(x,r)))^{\frac{Kt}{t'}}
=0 \quad\mbox{ for all $x\in\C$,}$$
which implies that $\theta_\nu^{t}(x)=0$ for all $x\in\C$. The fact that $E$ supports a non zero measure with zero $t$-density $\nu$-a.e. implies that $\H^{t}(E)$ is non $\sigma$-finite.
\fiproof

\bigskip

\section{Quasiconformal distortion of Hausdorff measures.}\label{tolsapreprint}
\noindent

First we need the following technical lemma.

\begin{lemma}\label{lemdens}
Let $0<s\leq 2$.
Let $\mu$ be a finite Borel measure, and let $x\in\C$ and $\theta_1>0$ be such that
\begin{equation}\label{eq21}
\frac{\mu(B(x,r))}{r^s} \leq \theta_1 \quad\mbox{ if $0< r\leq \delta$.}
\end{equation}
Then there exists $\delta'>0$, depending only on $\delta,a,s, \theta_1$ and $\mu(\C)$, such that
$$\ve_{\mu,a,s}(x,r)\leq \theta_2 \quad\mbox{ if $0< r\leq \delta'$,}
$$
with $\theta_2=C\,\theta_1$, with $C$ depending only on $a,s$.
\end{lemma}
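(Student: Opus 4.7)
The plan is to estimate $\ve_{\mu,a,s}(x,r) = r^{-s}\int \psi_{a,s}(|x-y|/r)\,d\mu(y)$ by splitting the integration domain into the dyadic annuli
\[
A_j = B(x,2^j r)\setminus B(x,2^{j-1}r),\qquad j\in\Z,
\]
together with the singleton $\{x\}$ (which has $\mu$-measure zero, since the hypothesis forces $\mu(\{x\})\leq \theta_1 \rho^s$ for every $\rho\leq\delta$). On $A_j$ one has $|x-y|/r\in[2^{j-1},2^j)$, so that $\psi_{a,s}(|x-y|/r) \leq C$ for $j\leq 1$ and $\psi_{a,s}(|x-y|/r) \leq C\,2^{-j(s+a)}$ for $j>1$, with $C=C(a,s)$.

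The bulk of the work is to control the tail. Fix $r$ small enough that $2r\leq \delta$, and let $J$ be the largest integer with $2^J r\leq \delta$. For $j\leq J$ the ball $B(x,2^j r)$ lies inside the regime where the hypothesis \eqref{eq21} applies, so $\mu(A_j)\leq \mu(B(x,2^j r))\leq \theta_1 (2^j r)^s$. For the small-scale part $j\leq 1$, summing these estimates (equivalently, using $\bigcup_{j\leq 1}A_j\subset B(x,2r)$) gives a contribution of at most $C\theta_1 r^s$ to $\int \psi_{a,s}(|x-y|/r)d\mu(y)$. For the intermediate range $1<j\leq J$, the relevant sum is
\[
\sum_{j=2}^{J} 2^{-j(s+a)}\,\theta_1\,(2^j r)^s \;=\; \theta_1 r^s\sum_{j=2}^{J} 2^{-ja}\cdot 2^{s+a}\;\leq\; C(a,s)\,\theta_1\,r^s,
\]
which is the decisive place where the tail exponent $a>0$ is used. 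For the far tail $j>J$, where \eqref{eq21} no longer applies, we bound crudely $\mu(A_j)\leq \mu(\C)$, and obtain
\[
\sum_{j>J} 2^{-j(s+a)}\mu(\C) \;\leq\; C\,\mu(\C)\,2^{-J(s+a)}\;\leq\; C\,\mu(\C)\left(\frac{r}{\delta}\right)^{s+a}.
\]

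Dividing everything by $r^s$, we obtain
\[
\ve_{\mu,a,s}(x,r)\;\leq\; C(a,s)\,\theta_1 \;+\; C\,\mu(\C)\,\frac{r^{a}}{\delta^{s+a}}.
\]
Choosing $\delta'>0$ so small that the second term is dominated by the first, namely
\[
(\delta')^{a} \;\leq\; \frac{\theta_1\,\delta^{s+a}}{C\,\mu(\C)},
\]
we conclude that $\ve_{\mu,a,s}(x,r)\leq C(a,s)\,\theta_1=\theta_2$ for all $0<r\leq\delta'$, and $\delta'$ depends only on $\delta,a,s,\theta_1,\mu(\C)$, as required. The only subtle point is ensuring the geometric series in the intermediate range is summable, which uses exactly the positivity of $a$; aside from that, the argument is a routine dyadic decomposition.
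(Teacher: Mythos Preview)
Your proof is correct and follows essentially the same approach as the paper's: a dyadic decomposition of the integral defining $\ve_{\mu,a,s}$, applying the density hypothesis \eqref{eq21} on annuli with $2^jr\leq\delta$, using the crude bound $\mu(\C)$ beyond that scale, and summing the resulting geometric series to obtain $\ve_{\mu,a,s}(x,r)\leq C\bigl(\theta_1 + r^a\mu(\C)/\delta^{s+a}\bigr)$, from which the choice of $\delta'$ is immediate. The only cosmetic difference is that the paper groups the region $|x-y|\leq r$ as a single piece and indexes annuli by $j\geq1$, whereas you index by $j\in\Z$ and collapse the small-scale annuli via $\bigcup_{j\leq 1}A_j\subset B(x,2r)$; the substance is identical.
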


\begin{proof}
By the definition of $\ve_{\mu,a,s}$ and $\psi_{a,s}$, 
\begin{align*}
\ve_{\mu,a,s}(x,r) &= \frac1{r^s} \biggl[ \int_{|x-y|\leq r} + \sum_{j\geq1} \int_{2^{j-1}r<|x-y|\leq 2^jr} \biggr]\psi_{a,s}
\Bigl(\frac{y-x}r\Bigr)d\mu(y)\\
& \leq C \sum_{j\geq0} \frac{\mu(B(x,2^jr))}{2^{j(s+a)}\,r^s}.
\end{align*}
If $2^jr\leq\delta$, we use the estimate \eqref{eq21}. Otherwise, we take into account that
$$\frac{\mu(B(x,2^jr))}{2^{js}\,r^s} \leq \frac{\mu(\C)}{\delta^s}.$$
So if $N$ denotes the biggest integer such that $2^Nr\leq\delta$, then
\begin{align*}
\ve_{\mu,a,s}(x,r) & \leq C \theta_1 \sum_{0\leq j\leq N} 2^{-ja}+ C 
\frac{\mu(\C)}{\delta^s} \sum_{j\geq N+1} 2^{-ja} \\
& \leq C \biggl(\theta_1 
+ 2^{-Na} \,\frac{\mu(\C)}{\delta^s}\biggr)
\leq C \biggl(\theta_1 
+ \frac{r^a\mu(\C)}{\delta^{s+a}}\biggr).
\end{align*}
If we take $\delta'$ small enough so that 
$$\frac{(\delta')^a\mu(\C)}{\delta^{s+a}} \leq \theta_1,$$
the lemma follows. 
\end{proof}

\medskip
\begin{lemma}
Let $0<t<2$ and set $t'=\frac{2Kt}{2+(K-1)t}$. Let $\phi:\C\to\C$ be a principal $K$-quasiconformal mapping, conformal outside the unit disk, and let $E\subset B(0,1/2)$. Then
\begin{equation}\label{eqhaus2}
\H^{t'}(\phi(E))\leq C(K)\,\H^t(E)^\frac{t'}{Kt}.
\end{equation}
\end{lemma}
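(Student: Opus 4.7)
I would assume $\H^t(E)<\infty$, since otherwise the inequality is trivial. The idea is to apply Lemma \ref{lemkgran} to a measure $\mu$ supported on $\phi(E)$ that is \emph{Frostman-like} on small scales, yet captures $\H^{t'}(\phi(E))$ up to arbitrarily small error; this will upgrade the $h$-content estimate in Lemma \ref{lemkgran} to an estimate on the full Hausdorff measure.

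First I would treat the case $\H^{t'}(\phi(E))<\infty$. Fixing $\eta>0$, the classical density theorem for Hausdorff measures gives $\limsup_{r\to 0}\H^{t'}(\phi(E)\cap B(y,r))/r^{t'}\leq C$ for $\H^{t'}$-a.e.\ $y\in\phi(E)$, and a standard measurable exhaustion (or Egorov's theorem) then produces a compact $F\subset\phi(E)$ and a single scale $r_0>0$ with $\H^{t'}(\phi(E)\setminus F)<\eta$ such that the measure $\mu:=\H^{t'}|_F$ obeys $\mu(B(y,r))\leq C\,r^{t'}$ for all $y\in\C$ and $0<r\leq r_0$. I would then choose $a>0$ small enough so that Lemma \ref{lemtec5} guarantees $(\ve_{\mu,a,t'}\circ\psi)^d\in\cG_2(t)$ for every principal $K$-quasiconformal map $\psi$ and every $d\in[t'/(Kt),1]$, ensuring the hypotheses of Lemma \ref{lemkgran} are in force. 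By Lemma \ref{lemdens} there exists $\delta'>0$ (depending on $r_0$ and $\mu(\C)\leq\H^{t'}(\phi(E))$) such that $\ve_{\mu,a,t'}(y,s)\leq C$ for $s\leq\delta'$, and the uniform continuity of $\phi$ on $\overline{\D}$ provides $\delta''>0$ with $\diam(\phi(B(x,r)))\leq\delta'$ whenever $x\in E$ and $r\leq\delta''$, hence
$$\wt h(x,r):=r^t\,\ve_{\mu,a,t'}(\phi(B(x,r)))^{Kt/t'}\leq C\,r^t\qquad\text{for }r\leq\delta''.$$
Consequently $\M^{\wt h}(E)\leq\H^{\wt h}_{\delta''}(E)\leq C\,\H^t(E)$, and combining this with Lemma \ref{lemkgran} and Lemma \ref{Mmu} I would get
$$\H^{t'}(F)=\mu(F)\leq 2\,\M^{h_{\mu,a,t'}}(\phi(E))\leq C(K)\,\M^{\wt h}(E)^{t'/(Kt)}\leq C(K)\,\H^t(E)^{t'/(Kt)}.$$
Since $\H^{t'}(\phi(E))\leq\H^{t'}(F)+\eta$, sending $\eta\to 0$ closes this case.

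For the remaining case $\H^{t'}(\phi(E))=\infty$, I would invoke the classical fact that any Borel set of infinite $\H^{t'}$-measure contains compact subsets of arbitrarily large finite $\H^{t'}$-measure; applying the previous step to each such $F_M\subset\phi(E)$ (replacing $E$ by $\phi^{-1}(F_M)\subset E$) yields the $M$-independent bound $\H^{t'}(F_M)\leq C(K)\,\H^t(E)^{t'/(Kt)}$, which contradicts $\H^{t'}(F_M)\to\infty$ and rules this case out entirely.

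\textbf{Main obstacle.} The delicate point is that Lemma \ref{lemkgran} controls only the $h$-content $\M^{h_{\mu,a,t'}}$, and the natural bridge to Frostman measures provided by Lemma \ref{Mmu} a priori gives control only of $\H^{t'}_\infty=\M^{t'}$, not the full Hausdorff measure $\H^{t'}$. The observation that rescues the argument is that $\mu:=\H^{t'}|_F$ is \emph{itself} a Frostman-like measure (via the density theorem) while simultaneously carrying almost the entire mass $\H^{t'}(\phi(E))$; this is precisely the mechanism that upgrades the content-level distortion estimate to the desired measure-level distortion estimate, explaining why the result is true despite the packing-based arguments of \cite{Lacey-Sawyer-Uriarte} seeming to require Hausdorff contents.
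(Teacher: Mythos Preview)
Your proof is correct and follows essentially the same approach as the paper: restrict $\H^{t'}$ to a subset of $\phi(E)$ where the upper density is uniformly bounded (via the density theorem), use this as the measure $\mu$ in Lemma \ref{lemkgran}, and invoke Lemma \ref{lemdens} to convert the small-scale growth bound on $\mu$ into a bound on $\ve_{\mu,a,t'}$. The only cosmetic differences are that you bound $\M^{\wt h}(E)\leq C\,\H^t(E)$ directly from the covering definition (the paper instead passes through a Frostman measure $\nu$ on $F$), and you treat the case $\H^{t'}(\phi(E))=\infty$ explicitly, which the paper leaves implicit.
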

\begin{proof}
To prove \rf{eqhaus2}, we may assume that $\H^{t'}(\phi(E))>0$. Because of the estimates on the upper density of Hausdorff measures (see \cite[p.89]{mattila}), there exists  $\delta>0$ and $F\subset E$ compact such that
$\H^{t'}(\phi(F))\geq \H^{t'}(\phi(E))/2$, and
\begin{equation}\label{eqdens}
\frac{\H^{t'}(\phi(E)\cap B(x,r))}{r^{t'}} \leq 5\quad \mbox{for all $x\in\phi(F)$ and $0<r\leq \delta.$}
\end{equation}
Let us denote $\mu=\H^{t'}_{|\phi(F)}$, and consider the associated gauge function
$$h_{\mu,a,t'}(x,r)=r^{t'}\,\ve_{\mu,a,t'}(x,r)=\int\psi_{a,t'}\left(\frac{|x-y|}{r}\right)d\mu(y)$$ 
where $\psi_{a,t'}(r)=\frac1{1+r^{a+t'}}$, for $a>0$ small enough. Recall that, by Lemma \ref{Mmu}, $\mu\leq 2\M^{h_{\mu,a,t'}}$.  Further, we can apply Lemma \ref{lemkgran} to $h_{\mu,a,t'}$ and $\ve_{\mu,a,t'}$ (since they fulfill the required assumptions if $a$ is chosen small enough, by Lemma \ref{lemtec5}), and then we get
\begin{equation}\label{eq2}
\M^{h_{\mu,a,t'}}(\phi(F)) \leq C(K,t) \,\M^{\wt h}(F)^{\frac{t'}{Kt}},
\end{equation}
where $\wt h(x,r)= r^{t}\wt\ve(x,r)$ and
$$\wt\ve(x,r)= \ve_{{\mu,a,t'}}(\phi(B(x,r)))^\frac{Kt}{t'}.$$
In particular, $\M^{\wt h}(F)>0$, and by Frostman's Lemma, there exists a measure $\nu$, supported on $F$, such that
\begin{equation}\label{eq61}
\nu(B(x,r))\leq \wt h(B(x,r)) = r^{t}\ve_{\mu,a,t'}(\phi(B(x,r)))^\frac{Kt}{t'}.
\end{equation}
Furthermore, we can choose $\nu$ so that $\nu(F)\geq C\,\M^{\wt h}(F)$.
It now suffices to show that $\ve_{\mu,a,t'}(\phi(B(x,r)))$ is uniformly bounded for $r$ small enough, as then $\nu(F)\leq C\H^t(F)$.
\\
\\
From \rf{eqdens} and Lemma \ref{lemdens} we infer that $\ve_{\mu,a,t'}(y,s)\leq C_1$ for all $y\in \phi(F)$ and $0<s<\delta'$, with $\delta'=\delta'(\delta, a,t',\mu(F))$, and $C_1=C_1(a,t')$. As a consequence, if $\delta''>0$ is taken small enough, then
\begin{equation}\label{growtheps}
\ve_{\mu,a,t'}(\phi(B(x,r))\leq C_1,\hspace{1cm}\text{for all $x\in F$ and $0<r<\delta''$.}
\end{equation}
To see this, first by quasisymmetry
$$B(\phi(x),r_1)\subset \phi(B(x,r))\leq B(\phi(x),r_2)\hspace{1cm}\text{ for some }r_2\leq C(K) r_1.$$
Moreover, from the local H\"older continuity of $K$-quasiconformal mappings,
$$\frac{2r_1}{\diam\phi(\D)}\leq C(K)\,\left(\frac{r}{\diam\D}\right)^\frac1K,$$
and since $\phi$ is conformal on $\C\setminus\D$, $|\phi(\D)|\leq C(K)\,|\D|$. Hence $r_1\leq C(K)r^{1/K}$, and therefore $\ve_{\mu,a,t'}(\phi(B(x,r))\leq\ve_{\mu,a,t'}(B(\phi(x),r_2))\leq C_1$ whenever $r_2<\delta'$, which immediately follows if $r<\delta''=C(K)\,(\delta')^K$. This proves \eqref{growtheps}.\\
\\
From \eqref{growtheps} and \rf{eq61}, we immediately get that
$$\nu(B(x,r))\leq C_3r^{t}\quad\mbox{all $x\in F$ and $0<r<\delta''$,}$$
with $C_3=C_3(K,t,t')$. It is easy to check that this implies that  
$\H^{t}(F)\gtrsim \nu(F)$.
Indeed, if 
$F\subset \bigcup_i A_i$, with $\diam(A_i)\leq d\leq \delta''$ and $A_i\cap 
F\neq\varnothing$, we take a ball
$B_i$ centered on $F\cap A_i$ with radius $r(B_i)=\diam(A_i)\leq \delta''$ for each $i$, and then
$$\sum_i\diam(A_i)^{t}= \sum_i r(B_i)^{t}\gtrsim \sum_i \nu(B_i) \geq \nu(F),$$
and so $\H^{t}_{d}(\phi(F))\gtrsim \nu(F)$ for all $0<d<\delta''$. Letting $d\to0$ our claim follows.
\\
\\
Summarizing, 
$$\aligned
\H^t(E)\geq \H^t(F)&\geq C\,\nu(F)\geq C\,\M^{\wt h}(F)\geq C\,\M^{h_{\mu,a,t'}}(\phi(F))^\frac{Kt}{t'}\\
&\geq C\,\mu(\phi(F))^\frac{Kt}{t'}\geq C\,\H^{t'}(\phi(F))^\frac{Kt}{t'}\geq C\,\H^{t'}(\phi(E))^\frac{Kt}{t'}.
\endaligned$$
\end{proof}

\medskip
\begin{proof}[\bf Proof of Theorem \ref{teopri}]
The theorem follows from the preceding lemma by standard arguments in quasiconformal theory. However, for completeness we give the details. We factorize $\phi=\phi_2\circ \phi_1$, where $\phi_1$, $\phi_2$ are both $K$-quasiconformal maps, with $\phi_1$ principal and conformal on $\C\setminus 2B$, and
$\phi_2$ is conformal on $\phi_1(2B)$. Let $g(z)=dz+ b$ be the linear function that maps the unit disk to $2B$ (so $d=\diam(B)$). The function $h=g^{-1}\circ \phi_1\circ g$ verifies the assumptions of the main lemma, so that
$$\H^{t'}(g^{-1}\circ\phi_1(E)) \leq C(K) \,\H^t(g^{-1}(E))^{\frac{t'}{Kt}}.$$
On the other hand,
$$\H^{t'}(g^{-1}\circ\phi_1(E)) = \frac{\H^{t'}(\phi_1(E))}{\diam(B)^{t'}},\qquad
\H^t(g^{-1}(E))=\frac{\H^t(E)}{\diam(B)^t}.$$
Using also quasisymmetry and Koebe's distortion theorem, we get that $\diam(\phi_1(B))\simeq\diam(\phi_1(2B))\simeq\diam(2B)$ with constants depending only on $K$. Hence
\begin{equation}\label{eqc4}
\frac{\H^{t'}(\phi(E))}{\diam(\phi_1(B))^{t'}}\leq C(K)\,\left(\frac{\H^t(E)}{\diam(B)^t}\right)^{\frac{t'}{Kt}}.
\end{equation}
Now, since $\phi_2$ is conformal on $\phi_1(2B)$, by Koebe's distortion theorem and quasisymmetry, for each ball $B_0$ contained in $B$ we have 
$$\frac{\diam(\phi_2(\phi_1(B_0)))}{\diam(\phi_2(\phi_1(2B)))}\simeq \frac{\diam(\phi_1(B_0))}{\diam(\phi_1(2B))}.$$
From this estimate and quasisymmetry again, it is straightforward to check that
$$\frac{\H^{t'}(\phi_1(E))}{\diam(\phi_1(B))^{t'}}\simeq \frac{\H^{t'}(\phi(E))}{\diam(\phi(B))^{t'}},$$
with constants depending on $K$, which together with \rf{eqc4} yields \rf{eqhaus}.
\end{proof}

\bigskip



\section{Examples showing sharpness of results}\label{SectionExamples}

In \cite[Thm 2.2]{uriartesharpqcstretching}, an example was constructed of a $K$-quasiconformal map $\phi: \C \to \C$ and a compact set $E \subset \D$ such that $diam(E) \simeq diam (\phi E) \simeq 1$ and such that, for $0<t<2$ and $t' = \frac{2Kt}{2+(K-1)t}$, $\H^t(E) \simeq \H^{t'}(\phi E) \simeq 1$. In the same paper, \cite[Cor 3.5]{uriartesharpqcstretching}, an example is constructed with the same hypotheses, except that both $\H^t(E)$ and $\H^{t'}(\phi E)$ are sigma-finite (but infinite.) These prove the sharpness of Corollary  \ref{teosigma} and Theorem \ref{teopri}.

We will show next that Theorem \ref{teocap2} is sharp. This was already shown for the case $\beta = \frac{2}{3}$, $q = \frac{3}{2}$ in \cite[Thm 8.8]{Tolsa-Uriarte}. We will follow the scheme in \cite{Tolsa-Uriarte} very closely, repeating some of the arguments from \cite{Tolsa-Uriarte} for the convenience of the reader.

\subsection{Basic construction for the subsequent examples}\label{BasicConstructionCantorSets}

Following the scheme of \cite{Tolsa-Uriarte}, we argue as in \cite{uriartesharpqcstretching}. We assume the reader is familiar with the latter paper and we will use the notation from it without further reference. The formulae look slightly nicer if we assume in the construction that $\varepsilon_n = 0$ for all $n$, i.e. that we take infinitely many disks in each step, completely filling the area of the unit disk $\mathbb D$ (see equations (2.1), (2.2) and (2.3) in \cite{uriartesharpqcstretching}.) It is not strictly needed to set in that construction $\varepsilon_n = 0$ for all $n$, and we will later indicate the corresponding formulae if $\varepsilon_n > 0$ for all $n$ (which is the case in \cite{uriartesharpqcstretching}.) The construction in \cite{uriartesharpqcstretching} works as well if we set $\varepsilon_n = 0$ for all $n$, the only point that the reader might wonder about is whether the resulting map is $K$-quasiconformal. 
However, this can be easily seen by a compactness argument (approximating the desired map by maps with finitely many circles in each step which are $K$-quasiconformal and have more and more disks in each step of the construction).

So we get (see equations (2.5) and (2.6) in \cite{uriartesharpqcstretching}) a Cantor type set $E$ and a $K$-quasiconformal map $\phi$ so that a building block in the $N$-th step of the construction of the source set $E$ is a disk with radius
\begin{equation}\label{RadiusSourceNthStep}
s_{j_1,...,j_N}=\left( (\sigma_{1,j_1})^K \, R_{1,j_1} \right) \dots \left( (\sigma_{N,j_N})^K R_{N,j_N} \right) ,
\end{equation}
and a building block in the $N$-th step of the construction in the target set $\phi(E)$ is a disk with radius given by
\begin{equation}\label{RadiusTargetNthStep}
t_{j_1,...,j_N}=\left( \sigma_{1,j_1} \, R_{1,j_1} \right) \dots \left( \sigma_{N,j_N} \, R_{N,j_N} \right) .
\end{equation}

Now we consider a measure $\mu$ supported on $\phi(E)$ (which will be the ``large" set of dimension $t'$) and its image measure $\nu = \phi^{-1}_\ast \mu$ supported on $E$ (which will be the ``small" set of dimension $t$) given by splitting the mass according to area. More explicitly,

\begin{equation}\label{DefinitionOfMuStep0}
\mu (\D) = 1,
\end{equation}
for any disk $B_{1,j_1} = \psi^{i_1}_{1,j_1} \left( \, \overline{\D} \, \right)$ of the first step of the construction with radius $t_{j_1} = \left( \sigma_{1,j_1} \, R_{1,j_1} \right)$,
\begin{equation}\label{DefinitionOfMuStep1}
\mu (B_{1,j_1}) = \left( R_{1,j_1} \right)^2,
\end{equation}
and in general, for any disk $B_{N ; j_1, \dots , j_N}^{i_1, \dots , i_N} = \psi^{i_1}_{1,j_1}  \circ \dots \circ \psi^{i_N}_{N,j_N} \left( \, \overline{\D} \, \right)$ of the $N^{th}$ step of the construction with radius 
$t_{j_1,...,j_N}=\left( \sigma_{1,j_1} \, R_{1,j_1} \right) \dots \left( \sigma_{N,j_N} \, R_{N,j_N} \right) $,
\begin{equation}\label{DefinitionOfMuStepN}
\mu (B_{N ; j_1, \dots , j_N}^{i_1, \dots , i_N}) = \left( R_{1,j_1} \dots R_{N,j_N} \right)^2\ .
\end{equation}

Since we took $\varepsilon_N = 0 $ for all $N$, the total mass of $\mu$ is $1$ in every step. (If one prefers to take $\varepsilon_N > 0 $ for all $N$, the definition should be changed to $\mu (B_{N ; j_1, \dots , j_N}^{i_1, \dots , i_N}) = \left( R_{1,j_1} \dots R_{N,j_N} \right)^2\ \ \prod_{n=N+1}^{\infty} \left( 1- \varepsilon_n \right) $, and the total mass of $\mu$ is renormalized by the factor $\prod_{n=1}^{\infty} \left( 1- \varepsilon_n \right) >0
$, but otherwise the rest of the construction we are about to describe works well.)

Since $\nu$ is the image measure, for any disk $D_{N ; j_1, \dots , j_N}^{i_1, \dots , i_N} = \varphi^{i_1}_{1,j_1}  \circ \dots \circ \varphi^{i_N}_{N,j_N} \left( \, \overline{\D} \, \right) = \phi^{-1}( B_{N ; j_1, \dots , j_N}^{i_1, \dots , i_N} = \varphi^{i_1}_{1,j_1}  \circ \dots \circ \varphi^{i_N}_{N,j_N} \left( \, \overline{\D} \, \right) )$
we get
\begin{equation}\label{DefinitionOfNuStepN}
\nu (D_{N ; j_1, \dots , j_N}^{i_1, \dots , i_N}) = \left( R_{1,j_1} \dots R_{N,j_N} \right)^2\ .
\end{equation}

The following lemma simplifies the computation of the Wolff potentials for the Cantor type sets just described. It was proved in \cite{Tolsa-Uriarte}, but we recall it here (as well as its proof) for the convenience of the reader.

\begin{lemma}\label{ComputingWolffPotentialInOurCantorSets}
For the Cantor type sets just described (in subsection \ref{BasicConstructionCantorSets}), for any $\alpha, p >0$ with $\alpha p < 2$, and for $x \in \phi (E)$, the Wolff potentials satisfy

$$\dot W^\mu_{\alpha,p}(x) \simeq \sum_n \biggl(\frac{\mu(B(x,2^{n}))}{2^{n(2-\alpha p)}}\biggr)^{p'-1}\, \simeq 
\sum_{ N : x \in B_{N ; j_1, \dots , j_N}^{i_1, \dots , i_N} } \biggl(\frac{\mu( B_{N ; j_1, \dots , j_N}^{i_1, \dots , i_N} ) }{ \left( t_{j_1,...,j_N} \right)^{(2-\alpha p)}}\biggr)^{p'-1} \ ,
$$
and analogously for $\nu$, $D_{N ; j_1, \dots , j_N}^{i_1, \dots , i_N}$ and $s_{j_1,...,j_N}$.

\end{lemma}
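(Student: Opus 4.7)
The first comparability in the lemma is a routine dyadic discretization of the Wolff potential. Writing $\int_0^\infty = \sum_{n \in \Z} \int_{2^n}^{2^{n+1}}$, on each interval $[2^n, 2^{n+1})$ the function $r \mapsto \mu(B(x,r))/r^{2-\alpha p}$ varies only by a bounded factor on the $(p'-1)$-th power after summation (using that $r \mapsto \mu(B(x,r))$ is non-decreasing and rearranging the resulting telescoping estimates), so
\[
\int_{2^n}^{2^{n+1}} \!\left(\frac{\mu(B(x,r))}{r^{2-\alpha p}}\right)^{p'-1}\!\frac{dr}{r} \; \text{summed over } n \;\simeq\; \sum_n \left(\frac{\mu(B(x,2^n))}{2^{n(2-\alpha p)}}\right)^{p'-1}.
\]
This is the standard fact (cf.\ \cite{adamshedberg}) and will be quickly disposed of.

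The content of the lemma is the second comparability, which converts the dyadic sum into a sum over the building blocks of the Cantor construction. For $x \in \phi(E)$, let $B_N = B_N(x) = B_{N;j_1,\dots,j_N}^{i_1,\dots,i_N}$ denote the unique $N$-th step block containing $x$, with radius $t_N := t_{j_1,\dots,j_N}$ and mass $\mu(B_N) = (R_{1,j_1}\cdots R_{N,j_N})^2$. The plan rests on two geometric facts coming directly from the construction in \cite{uriartesharpqcstretching}:

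\textbf{(a)} The ratio $t_N/t_{N-1} = \sigma_{N,j_N} R_{N,j_N}$ lies in a fixed interval $(c_0, C_0) \subset (0,1)$ determined by the construction parameters. Hence $(t_N)_{N \ge 0}$ decreases geometrically, and for each integer $n$ there are $O(1)$ indices $N$ with $t_N \in [2^n, 2^{n+1})$.

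\textbf{(b)} Whenever $t_N \le r < t_{N-1}$, one has $\mu(B(x,r)) \simeq \mu(B_N)$. The lower bound is immediate since $B_N \subset B(x,r)$. For the upper bound, $B(x,r)$ intersects only boundedly many $N$-th step blocks inside the parent $B_{N-1}$ (a packing argument: the $N$-th step disks are pairwise disjoint of radius $\simeq t_N$), and the $\mu$-masses of these neighboring blocks are comparable to $\mu(B_N)$ (since the $R_{N,j_N}$ range in a fixed interval, so the masses within a common parent differ by bounded factors).

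Once \textbf{(a)} and \textbf{(b)} are in hand, the dyadic sum is reindexed over $N$: each term $(\mu(B(x,2^n))/2^{n(2-\alpha p)})^{p'-1}$ is replaced by the corresponding $(\mu(B_N)/t_N^{2-\alpha p})^{p'-1}$, with a bounded-multiplicity change of index, yielding the second equivalence. The statement for $\nu$ is identical: $\nu(D_N) = \mu(B_N)$ by the definition of the image measure, and the geometric properties (a) and (b) hold verbatim with $t_N$ replaced by $s_{j_1,\dots,j_N}$, since the source-side radii are merely the target-side radii raised to the power $K$ in a multiplicative sense (see \eqref{RadiusSourceNthStep}--\eqref{RadiusTargetNthStep}), preserving the geometric decay required in (a).

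The main obstacle is step \textbf{(b)}, where one must rule out the possibility that $B(x,r)$ for $r$ only slightly larger than $t_N$ picks up substantial extra mass from neighboring $N$-th step blocks. This is where the detailed geometry of the construction (disjointness of the building blocks and the bounded range of the generating parameters) is essential; once this is settled, the reindexing is purely formal.
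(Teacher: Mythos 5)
Your reduction to the dyadic sum (the first $\simeq$) is fine, but the core of your argument, facts \textbf{(a)} and \textbf{(b)}, does not survive contact with the actual construction, and the paper's proof is built precisely around the point where your argument breaks. Since $\varepsilon_n=0$, each step of the construction uses \emph{infinitely many} disks filling the area of the parent, so the radii $R_{N,j_N}$ (and hence the ratios $t_N/t_{N-1}=\sigma_{N,j_N}R_{N,j_N}$) are not bounded below by any $c_0>0$: there is no ``fixed interval'' in which the generating parameters range, and within one parent the sibling blocks have wildly different masses. Consequently the number of dyadic scales $n$ with $t_N\le 2^n<t_{N-1}$ is unbounded, so your ``bounded-multiplicity change of index'' is not available. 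More seriously, \textbf{(b)} is false: for $r$ comparable to $t_{N-1}$ the ball $B(x,r)$ captures a fixed fraction of the parent block, so $\mu(B(x,r))\gtrsim\mu(B_{N-1})=\mu(B_N)/R_{N,j_N}^2$, which exceeds $\mu(B_N)$ by an arbitrarily large factor when $R_{N,j_N}$ is small. Hence $\mu(B(x,r))$ is \emph{not} comparable to $\mu(B_N)$ throughout $[t_N,t_{N-1})$, and the term-by-term replacement you propose fails exactly at the ``main obstacle'' you flag; the packing argument with boundedly many neighbors of comparable mass does not exist in this construction.

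The paper's proof resolves this by splitting the scales between consecutive generations at the radius of the \emph{protecting} disk $P^{N}_{I;J}=\sigma_{N,j_N}^{-1}\,\psi^{i_1}_{1,j_1}\circ\cdots\circ\psi^{i_N}_{N,j_N}(\D)$, which isolates the generating disk $G^N_{I;J}$ from its siblings. For $r(G^{N}_{I;J})\lesssim t\lesssim r(P^{N}_{I;J})$ the ball meets no sibling mass, so $\mu(B(x,t))=\mu(G^{N}_{I;J})$ exactly, and the summand $\bigl(\mu(G^{N}_{I;J})/t^{2-\alpha p}\bigr)^{p'-1}$ forms a geometric series dominated by its smallest scale $t\simeq t_{j_1,\dots,j_N}$ (using $2-\alpha p>0$), yielding the generation-$N$ term. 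For $r(P^{N}_{I;J})\lesssim t\lesssim r(G^{N-1}_{I';J'})$ the mass is area-proportional inside the renormalized parent, $\mu(B(x,t))\lesssim (t/r(G^{N-1}_{I';J'}))^{2}\,\mu(G^{N-1}_{I';J'})$, so at scale $t=2^{-k}r(G^{N-1}_{I';J'})$ the summand carries an extra factor $2^{-k\alpha p(p'-1)}$ and the series is dominated by its largest scale, i.e.\ by the generation-$(N-1)$ term. Summing over $N$ gives the block sum with no multiplicity issue, no matter how small the individual $R_{N,j_N}$ are. Your outline cannot be repaired without this (or an equivalent) two-regime analysis.
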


\begin{proof}

We first introduce some convenient notation. 
For any multiindexes $I=(i_1,...,i_N)$ and  $J=(j_1,...,j_N)$, where $1\leq i_k , 
j_k \leq \infty $ (since we are taking infinitely many disks in each step of the construction), 
we will denote by
\begin{equation}\label{DefinitionProtectingDisk}
P^{N}_{I;J} = \frac{1}{\sigma_{N,j_N}}\, \psi^{i_1}_{1,j_1} \circ \dots \circ \psi^{i_N}_{N,j_N}(\D)
\end{equation}
a {\it{protecting}} disk of generation $N$. Then, $P^N_{I;J}$ has radius 
$$r(P^{N}_{I;J}) = \frac{1}{ \sigma_{N,j_N} } t_{j_1,...,j_N}=\left( \sigma_{1,j_1} \, \dots \sigma_{N-1,j_{N-1}} \right) \left(   R_{1,j_1} \dots   R_{N,j_N} \right).$$ 
Analogously, we will write
\begin{equation}\label{DefinitionGeneratingDisk}
G^{N}_{I;J} = \psi^{i_1}_{1,j_1}  \circ \dots \circ \psi^{i_N}_{N,j_N}(\D)
\end{equation}
to denote a {\it{generating}} disk of generation $N$, which has radius $$r(G^{N}_{I;J}) = t_{j_1,...,j_N}=\left( \sigma_{1,j_1} \, \dots \sigma_{N,j_N} \right)  \left(   R_{1,j_1} \dots   R_{N,j_N} \right).$$

Notice that, since all values of $\sigma_{n,j_n}$ and $R_{n,j_n}$ are $\leq \frac{1}{100}$, then $\mu (G^{N}_{I;J}) = \mu (2 G^{N}_{I;J})$, so we can pretend without loss of generality that the radii $t_{j_1,...,j_N}$ are of the
form $2^k$, $k\in\Z$.

Now, if $ r(G^{N}_{I;J}) \lesssim t \lesssim r(P^{N}_{I;J}) $, and $x \in \phi (E)$ so that $B(x, t) \subseteq P^{N}_{I;J}$, then $\mu ( B(x,t)) = \mu ( G^{N}_{I;J} )$, so that 
$$
\sum_{n : G^{N}_{I;J} \subseteq B(x,2^{n}) \subseteq P^{N}_{I;J}} \biggl(\frac{\mu(B(x,2^{n}))}{2^{n(2-\alpha p)}}\biggr)^{p'-1}
$$
is a geometric series with sum comparable, with constants depending on $\alpha$ and $p$, to its largest term, namely
to $ \biggl(\frac{\mu( G^{N}_{I;J} ) }{ \left( t_{j_1,...,j_N} \right)^{(2-\alpha p)}}\biggr)^{p'-1}  $.

And if $ r(P^{N}_{I;J}) \lesssim t \lesssim r(G^{N-1}_{I';J'}) $, where $G^{N-1}_{I';J'}$ is the unique generating disk of generation $N-1$ containing $P^{N}_{I;J}$, and $x \in \phi (E)$, so that $P^{N}_{I;J} \subseteq B(x, t) \subseteq G^{N-1}_{I';J'}$, then 
\begin{equation}\label{UpperBoundForMuOfBallIfBallInBetweenProtectingNAndGeneratingNMinus1}
\mu ( B(x,t)) \lesssim \frac{ t^2}{ \left( \sigma_{1,j_1} \, \dots \sigma_{N-1,j_{N-1}} \right)  \left(   R_{1,j_1} \dots   R_{N-1,j_{N-1}} \right)}  \left(   R_{1,j_1} \dots   R_{N-1,j_{N-1}} \right)^2 \ , 
\end{equation}
i.e., the mass that $\mu$ assigns to $B(x,t)$ is proportional to its area once $G^{N-1}_{I';J'}$ is renormalized to $\D$, but multiplied by the mass that $\mu$ assigns to $G^{N-1}_{I';J'}$, namely $\left(   R_{1,j_1} \dots   R_{N-1,j_{N-1}} \right)^2$. Hence 
$$
\sum_{n : P^{N}_{I;J} \subseteq B(x,2^{n}) \subseteq G^{N-1}_{I';J'}} \biggl(\frac{\mu(B(x,2^{n}))}{2^{n(2-\alpha p)}}\biggr)^{p'-1} 
$$
is dominated by a geometric series (if $n$ appears in the above sum and $2^{n} = \frac{ r(G^{N-1}_{I';J'}) }{2^k}$ with $k >0$, then $$ \biggl(\frac{\mu(B(x,2^{n}))}{2^{n(2-\alpha p)}}\biggr)^{p'-1} \lesssim  \biggl(\frac{\mu( G^{N-1}_{I';J'})}{r(G^{N-1}_{I';J'})^{(2-\alpha p)}} \ \frac{ 2^{k(2-\alpha p)} }{2^{2k} }  \biggr)^{p'-1},$$  and hence the above sum is $ \lesssim  \biggl(\frac{\mu( G^{N-1}_{I';J'} )}{r(G^{N-1}_{I';J'})^{(2-\alpha p)}}\biggr)^{p'-1}$, with constants depending only on $\alpha$ and $p$.)
\end{proof}

\subsection{Example}\label{CantorSetExample}

In order to see that Theorem \ref{teocap2} is sharp, it is useful to recall Theorem 5.5.1 (b) in \cite{adamshedberg} adapted to our situation (and combined with Proposition 5.1.4):

\begin{theorem}\label{TheoremRelatingCapacities}
Let $E \subset \C$. Then there is a constant $A$ such that 
$$
\dot C_{\beta, q} (E) \leq A \dot C_{\alpha, p} (E) \ ,
$$
for $\beta q = \alpha p = 2-\frac{2}{K+1} = \frac{2K}{K+1}$, $p<q$.

Moreover, there exist sets $E$ such that $\dot C_{\beta, q} (E) =0$ but $\dot C_{\alpha, p} (E) >0$.
\end{theorem}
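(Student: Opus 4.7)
The plan is to deduce the linear comparison from Wolff's theorem, exploiting the fact that a bounded Wolff potential forces uniformly bounded $t$-dimensional density for the common dimension $t=2-\alpha p=2-\beta q\in(0,2)$, so that the only real difference between the two Wolff energies is the exponent $p'-1$ versus $q'-1$.

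First I would fix a measure $\mu$ supported on $E$ with $\dot W^\mu_{\beta,q}(x)\le 1$ for every $x$, which by Wolff's theorem (as recalled in Subsection~\ref{strategy}) realizes $\dot{\cal C}_{\beta,q}(E)$ up to multiplicative constants. Setting $F(r)=\mu(B(x,r))/r^{\,t}$ and using that $r\mapsto\mu(B(x,r))$ is non-decreasing, for $r\ge r_0$ one has $F(r)\ge F(r_0)(r_0/r)^{t}$. Substituting into the tail of the Wolff integral,
$$
1\;\ge\;\dot W^\mu_{\beta,q}(x)\;\ge\;\int_{r_0}^\infty F(r)^{q'-1}\,\frac{dr}{r}\;\ge\; c(t,q)\,F(r_0)^{q'-1},
$$
so $F\le C$ uniformly in $x$ and $r$. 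Since $p<q$ gives $p'-1>q'-1>0$, the factorization $F^{p'-1}=F^{q'-1}\cdot F^{p'-q'}$ together with the pointwise bound on $F$ yields
$$
\dot W^\mu_{\alpha,p}(x)\;\le\;C^{p'-q'}\,\dot W^\mu_{\beta,q}(x)\;\le\;C^{p'-q'}.
$$
Thus a fixed scalar multiple of $\mu$ is admissible for $\dot{\cal C}_{\alpha,p}(E)$, giving $\mu(E)\lesssim \dot{\cal C}_{\alpha,p}(E)$; taking the supremum over admissible $\mu$ and invoking Wolff's theorem once more for $\dot{\cal C}_{\beta,q}$ delivers $\dot{\cal C}_{\beta,q}(E)\le A\,\dot{\cal C}_{\alpha,p}(E)$.

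For the separation statement, I would exhibit a Cantor-type set by the scheme of Subsection~\ref{BasicConstructionCantorSets}, tuning the ratios $R_{n,j_n}$ and $\sigma_{n,j_n}$ so that the natural Frostman measure $\mu$ has uniformly bounded $t$-dimensional density, yet a Wolff-potential tail that diverges for the exponent $q'-1$ while remaining finite for $p'-1$. By Lemma~\ref{ComputingWolffPotentialInOurCantorSets} each Wolff potential collapses to a series indexed by generations, whose $k$-th term is the $k$-th generation density raised to the relevant power, so a borderline choice of parameters (typically one producing a logarithmically slow decay of densities along the tree) can be arranged to make the $(q'-1)$-series diverge along every branch while the $(p'-1)$-series converges. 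This forces $\dot{\cal C}_{\beta,q}(E)=0$ by the equivalent Wolff characterization, while keeping $\dot{\cal C}_{\alpha,p}(E)>0$. The main technical obstacle is realizing both bounds simultaneously along every branch of the tree, which is the essence of the classical borderline example in Adams--Hedberg \cite{adamshedberg}; the remainder is the routine verification that the series estimates uniformly translate into the pointwise Wolff bounds one needs.
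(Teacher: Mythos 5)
The paper itself offers no proof of this statement: it is quoted directly from Adams--Hedberg (\cite{adamshedberg}, Theorem 5.5.1(b) combined with Proposition 5.1.4). So your proposal is being compared against a citation, not an argument. For the inequality $\dot C_{\beta,q}(E)\le A\,\dot C_{\alpha,p}(E)$ your proof is correct and self-contained, and it buys something the paper does not supply: a two-line argument entirely inside the Wolff framework already set up in Subsection \ref{strategy}. The two key points are both right. First, the tail estimate $\int_{r_0}^\infty F(r)^{q'-1}\,\frac{dr}{r}\ge c\,F(r_0)^{q'-1}$ with $F(r)=\mu(B(x,r))/r^{2-\beta q}$, valid because $\mu(B(x,\cdot))$ is non-decreasing and $2-\beta q>0$, gives the uniform density bound $F\le C$ from $\dot W^\mu_{\beta,q}\le 1$. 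Second, since $\alpha p=\beta q$ the two Wolff potentials are integrals of $F^{p'-1}$ and $F^{q'-1}$ against the same $dr/r$, and $p<q$ gives $p'-q'>0$, so $F^{p'-1}=F^{q'-1}F^{p'-q'}\le C^{p'-q'}F^{q'-1}$ pointwise. One small point to make explicit: the Wolff potential is homogeneous of degree $p'-1$ in $\mu$, so ``a fixed scalar multiple of $\mu$ is admissible'' means replacing $\mu$ by $\lambda\mu$ with $\lambda=C^{-(p'-q')/(p'-1)}$; this only costs a constant depending on $p,q,K$, as needed.

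The second assertion is where your proposal has a genuine gap. Computing (via Lemma \ref{ComputingWolffPotentialInOurCantorSets}) that the \emph{natural} measure $\mu$ on the Cantor set has $\dot W^\mu_{\beta,q}\equiv\infty$ while $\dot W^\mu_{\alpha,p}$ is bounded proves $\dot C_{\alpha,p}(E)>0$, but it does not by itself prove $\dot C_{\beta,q}(E)=0$: by the Wolff characterization, vanishing of the capacity requires that \emph{every} nonzero measure supported on $E$ have unbounded potential (equivalently, infinite Wolff energy), not just the natural one. Closing this requires the standard extremality step for generation-homogeneous Cantor sets --- by convexity of $x\mapsto x^{q'}$, redistributing mass uniformly over the disks of each generation does not increase the Wolff energy, so the natural measure is energy-minimizing and its divergence kills all competitors --- or else an explicit appeal to the borderline example in \cite{adamshedberg}, which is in effect what the paper does. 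You gesture at this (``the essence of the classical borderline example''), but as written the deduction ``divergence for $\mu$ forces $\dot C_{\beta,q}(E)=0$'' is not justified.
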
 

Hence it is conceivable that Theorem \ref{teocap2} might be strengthened to a statement of the form 
$$
\frac{\dot{C}_{\beta,q}(\phi(E))}{\diam(\phi(B))^{t'}}\leq C(\beta,q,K)\,\left(\frac{\dot{C}_{\widetilde{\alpha}, \widetilde{p}}(E)}{\diam(B)^t}\right)^\frac{t'}{Kt} \;,
$$
for some $\widetilde{\alpha}, \widetilde{p}$ such that $\widetilde{\alpha} \widetilde{p} = \alpha p = 2-t$ and $\widetilde{p} > p$. The following theorem shows that the answer to this question is negative.

\begin{theorem}\label{teosharp}
For any $\widetilde{\alpha}, \widetilde{p} >0$ such that $\widetilde{\alpha} \widetilde{p} = \alpha p = 2-t$ and $ \widetilde{p} > p$, there exists a compact $E \subset \C$ and a $K$-quasiconformal map $\phi$ such that $\dot C_{ \beta, q}(\phi E) >0$ (and hence $\dot C_{ \alpha , p }(E) >0$, due to Theorem \ref{teocap2}), but $ \dot C_{ \widetilde{\alpha}, \widetilde{p}}(E) = 0$.
\end{theorem}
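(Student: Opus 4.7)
The plan is to adapt the Cantor-type construction of Subsection \ref{BasicConstructionCantorSets} with a non-self-similar choice of scale parameters. I would fix a small $\sigma>0$, set $\sigma_{n,j_n}=\sigma$ uniformly, and choose the shrinkage factors $R_{n,j_n}=R_n$ so that
$$
b_N := \prod_{n=1}^N \frac{R_n^{2-t}}{\sigma^{Kt}} \simeq N^{-\gamma}, \qquad \gamma := \widetilde p - 1,
$$
which forces $R_n \to \sigma^{Kt/(2-t)}$, a quantity that can be made $\le 1/100$ by taking $\sigma$ small enough. Using the identity $Kt/(2-t)=t'/(2-t')$ equivalent to the definition of $t'$, the analogous target-scale product $a_N := \prod_{n=1}^N R_n^{2-t'}/\sigma^{t'}$ satisfies $a_N = b_N^{(2-t')/(2-t)}$.

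To show $\dot C_{\beta,q}(\phi(E))>0$, I would take the self-similar measure $\mu$ on $\phi(E)$ from \eqref{DefinitionOfMuStepN}. Lemma \ref{ComputingWolffPotentialInOurCantorSets} gives, for $x \in \phi(E)$,
$$
\dot W^\mu_{\beta,q}(x) \simeq \sum_{N\ge1} a_N^{q'-1} \simeq \sum_{N\ge1} N^{-\gamma(q'-1)(2-t')/(2-t)}.
$$
Using the identity $(p-1)(2-t')=(q-1)(2-t)$ derived from $p = 1+(Kt/t')(q-1)$ in Theorem \ref{teocap2}, together with $\gamma=\widetilde p-1$, the exponent of $N$ equals $(\widetilde p-1)/(p-1)>1$ by the hypothesis $\widetilde p > p$, so the series converges uniformly in $x$. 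By Wolff's theorem, a rescaling of $\mu$ is admissible, giving $\dot C_{\beta,q}(\phi(E))>0$.

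To show $\dot C_{\widetilde\alpha,\widetilde p}(E)=0$, by Wolff's theorem it suffices to show that any nonzero $\nu'$ with $\supp(\nu')\subset E$ has $\sup_x \dot W^{\nu'}_{\widetilde\alpha,\widetilde p}(x)=\infty$, and I would establish the stronger fact that $\dot W^{\nu'}\notin L^1(\nu')$. For $x \in E$, writing $c_N$ for the generation-$N$ source radius and $D^{(N)}(x)$ for the disk containing $x$, the inclusion $D^{(N)}(x)\subset B(x,2c_N)$ yields
$$
\dot W^{\nu'}_{\widetilde\alpha,\widetilde p}(x) \gtrsim \sum_{N\ge 1} \Bigl(\frac{\nu'(D^{(N)}(x))}{c_N^{\,t}}\Bigr)^{\widetilde p'-1}.
$$
Integrating against $\nu'$, applying Jensen's inequality with $|A_N|=(R_1\cdots R_N)^{-2}$ generation-$N$ disks, and using $(R_1\cdots R_N)^2/c_N^{\,t}=b_N$, one obtains
$$
\int \dot W^{\nu'}\,d\nu' \;\gtrsim\; \nu'(E)^{\widetilde p'} \sum_{N\ge 1} b_N^{\widetilde p'-1} \;\simeq\; \nu'(E)^{\widetilde p'} \sum_{N\ge 1} \frac1N \;=\; \infty.
$$

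The main technical obstacle is verifying that the construction of \cite{uriartesharpqcstretching} can be carried out with our varying sequence $R_n$, since $M_n = 1/R_n^2$ should be a positive integer and the packings must fit inside each parent. This is handled by keeping $\sigma$ small enough that $R_n \le 1/100$, and by allowing small variations in $\sigma_{n,j_n}$ within each generation (absorbing a small $\varepsilon_n$ slack as in \cite{uriartesharpqcstretching}); such adjustments leave the asymptotic behavior of $a_N$ and $b_N$ unchanged. A secondary requirement, needed in the lower bound for $\dot W^{\nu'}$ to produce geometric rather than logarithmic contributions at each scale, is that $c_N/c_{N-1}=\sigma^K R_N$ stays bounded away from $1$, which again follows from $\sigma$ being small.
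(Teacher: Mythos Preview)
Your proposal is correct and follows the same overall strategy as the paper: construct a Cantor-type set as in Subsection~\ref{BasicConstructionCantorSets} with parameters tuned so that the density ratios $\mu(B_N)/t_N^{\,t'}$ and $\nu(D_N)/s_N^{\,t}$ decay like $N^{-c}$ at the borderline exponents. The parametrizations differ: the paper keeps the area-filling packing radii $R_{N,j_N}$ and couples the contraction via $\sigma_{N,j_N}=R_{N,j_N}^{(2-t)/(tK)}d_N$ with $d_N=((N{+}1)/N)^\delta$ and $\delta=1/(tK(\widetilde p'-1))$, whereas you fix $\sigma$ and vary a uniform $R_n$ so that $b_N\simeq N^{-(\widetilde p-1)}$. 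Both lead to the same convergence/divergence dichotomy, and the algebraic identities you use ($Kt(2-t')=t'(2-t)$, $(p-1)(2-t')=(q-1)(2-t)$) are exactly the ones underlying the paper's computation. One point where your argument is actually more complete: to conclude $\dot C_{\widetilde\alpha,\widetilde p}(E)=0$ the paper only checks that the \emph{natural} measure $\nu$ has $\dot W^\nu_{\widetilde\alpha,\widetilde p}\equiv\infty$ on $E$, tacitly relying on its extremality; your Jensen argument over all generation-$N$ disks for an \emph{arbitrary} $\nu'$ yields $\int \dot W^{\nu'}\,d\nu'=\infty$ directly and closes that step cleanly. Your remark about adjusting $M_n$ and absorbing slack via $\varepsilon_n$ is adequate, since only the asymptotics of $a_N,b_N$ matter.
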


\begin{proof}

For $E$ and $\phi$ as in Subsection \ref{BasicConstructionCantorSets}, 
notice that by Lemma \ref{ComputingWolffPotentialInOurCantorSets}, for $x \in \phi (E)$

\begin{eqnarray*}
\dot W^{\mu}_{ \beta, q } (x) & \simeq & \sum_{ N : x \in B_{N ; j_1, \dots , j_N}^{i_1, \dots , i_N} } \biggl(\frac{\mu( B_{N ; j_1, \dots , j_N}^{i_1, \dots , i_N} ) }{ \left( t_{j_1,...,j_N} \right)^{2-\beta q} } \biggr)^{q'-1}  \\
& = & \sum_{ N : x \in B_{N ; j_1, \dots , j_N}^{i_1, \dots , i_N} } \biggl(\frac{ \left( R_{1,j_1} \dots R_{N,j_N} \right)^2 }{  \left(  \sigma_{1,j_1} \dots \sigma_{N,j_N} R_{1,j_1} \dots R_{N,j_N} \right)^{2-\beta q} }\biggr)^{q'-1}.
\end{eqnarray*}

Since on the one hand $E$ is very ``close" to satisfying $0 < \H^{t}(E) < \infty$ 
and $0 < \H^{t'}(\phi E) < \infty$ (see (3.9), (3.10) and (4.5) in \cite{uriartesharpqcstretching}) and, on the other hand, an important element in 
the proof of the semiadditivity of analytic capacity is that the potential is ``approximately 
constant" on each scale (see \cite{tolsasemiadditivityanalyticcapacity}), the above equation 
suggests the choice
\begin{equation}\label{ChoiceOfSigma}
\sigma_{N,j_N} = \left( R_{N,j_N}  \right)^{\frac{2-t}{tK}}  \, d_N\quad\mbox{ for all $N$,}
\end{equation} 
where $d_N \in [1,2]$ is a parameter to be determined, independent of $j_N$.

If we take
\begin{equation}\label{ChoiceOfDjForExample2}
d_j = \left(  \frac{j+1}{j} \right)^\delta,
\end{equation} 
for an appropriate $\delta >0$ to be chosen later, then for $x \in \phi E$, we have
\begin{equation}\label{FirstConditionForDelta}
\dot W^{\mu}_{ \beta, q  } (x) \simeq \sum_n \left\{ \prod_{j=1}^{n} \frac{1}{\left( d_j \right)^{t' (q' -1)}} \right\} = \sum_{n=2}^{\infty} \frac{1}{n^{t' (q' -1) \delta}} < \infty,
\end{equation}
once $\delta >0$ is appropriately chosen, so that $ \dot C_{ \beta, q } (\phi E) >0$ (and hence $\dot C_{ \alpha , p }(E) >0$, due to Theorem \ref{teocap2}.) 

By Lemma \ref{ComputingWolffPotentialInOurCantorSets} and
\rf{RadiusSourceNthStep}, for $x \in E$,
\begin{eqnarray*}
\dot W^{\nu}_{ \widetilde{\alpha}, \widetilde{p} } (x) & \simeq & \sum_{ N : x \in D_{N ; j_1, \dots , j_N}^{i_1, \dots , i_N} } \biggl(\frac{\nu( D_{N ; j_1, \dots , j_N}^{i_1, \dots , i_N} ) }{ \left( s_{j_1,...,j_N} \right)^{ 2- \widetilde{\alpha} \widetilde{p} }}\biggr)^{\widetilde{p}' - 1}  \\
& = & \sum_{ N : x \in D_{N ; j_1, \dots , j_N}^{i_1, \dots , i_N} } \biggl(\frac{ \left( R_{1,j_1} \dots R_{N,j_N} \right)^2 }{ \left[ \left( \sigma_{1,j_1} \dots \sigma_{N,j_N} \right)^{K}  \left( R_{1,j_1} \dots R_{N,j_N} \right) \right]^{ 2- \widetilde{\alpha} \widetilde{p} }
   }\biggr)^{\left( \widetilde{p}'-1 \right) } \ ,
\end{eqnarray*}
so that, substituting $\sigma_{N,j_N} = \left( R_{N,j_N}  \right)^{\frac{2-t}{tK}}  \, d_N$  and $d_j = \left(  \frac{j+1}{j} \right)^\delta$ we get, for $x \in E$,
\begin{equation}\label{SecondConditionForDelta}
\dot W^{\nu}_{ \widetilde{\alpha}, \widetilde{p} } (x) \simeq \sum_n \left\{ \prod_{j=1}^{n} \frac{1}{\left( d_j \right)^{tK}} \right\}^{\left( \widetilde{p}'-1 \right)  } =
\sum_{n=2}^{\infty} \frac{1}{n^{t K (\widetilde{p}' -1) \delta}} = \infty,
\end{equation}
once $\delta >0$ is appropriately chosen, so that $ \dot C_{ \widetilde{\alpha}, \widetilde{p} } ( E) =0$.

In order that both \eqref{FirstConditionForDelta} and \eqref{SecondConditionForDelta} be satisfied, it is enough to choose $\delta > 0$ so that $ t K (\widetilde{p}' -1) \delta \leq 1 < t K (p' - 1) \delta $ (since $t K (p' - 1) = t' (q' -1)$, which by the way implies that, in our example, $ \dot W^{\mu}_{ \beta, q } ( \phi x) \simeq  \dot W^{\nu}_{\alpha, p } (x) $ for $x \in E$.) Since $\widetilde{p} > p $, this can be achieved by setting 
$$
\delta = \frac{ 1}{ t K (\widetilde{p}' -1) } \;.
$$

\end{proof}

Let us remark that the above example also gives that $\dot C_{\gamma,r}(E)=0$ if 
$\gamma\,r<\widetilde{\alpha}\,\widetilde{p} = \alpha\,p = 2-t$. This due to the fact that there is some constant $A$ 
independent of $E$ such that
$$\dot C_{\gamma,r}(E)^{1/(2-\gamma r)}\leq A \,\dot C_{\alpha,p}(E)^{1/(2-\alpha p)}.$$
See Theorem 5.5.1 of \cite{adamshedberg}.

\bigskip
\bigskip

\noindent{\bf Acknowledgements:}
Part of this paper was done while all authors were attending the research semester
``Harmonic Analysis, Geometric Measure Theory and Quasiconformal Mappings'' in
the CRM (Barcelona) in 2009.

\noindent 
K.A.\ is supported by Academy of Finland, grants SA-134757 
and CoE in Analysis and dynamics research, grant SA-18634, and EU-network CODY, grant 
400630.
A.C.\ and J.V.\
are partially supported by grants  2009-SGR-420 (Catalonia) and MTM2007-60062 (Spain). A.C. is also partially supported by EU-network CODY.
X.T.\ is partially supported by grants  2009-SGR-420 (Catalonia) and MTM2007-62817 (Spain). I.U.\ 
was a postdoctoral fellow at the University of Missouri, Columbia, USA, and at Centre de Recerca Matem\`{a}tica, Barcelona, Spain, for some periods of time during the elaboration of this paper. He was partially supported by NSF grant DMS-0901524.

\bigskip

\bibliographystyle{alpha}
\bibliography{./references25G}

\enlargethispage{2cm}
\end{document}